\newtheorem{theorem}{Theorem}[section]
\newtheorem{corollary}[theorem]{Corollary}
\newtheorem{proposition}[theorem]{Proposition}
\newtheorem{lemma}[theorem]{Lemma}
\newtheorem{remark}[theorem]{Remark}
\newcommand{\E}[1]{\mathbb{E} \Big[ #1 \Big]}
\newcommand{\Es}[1]{\mathbb{E} [ #1 ]}
\def\EE{\mathbb{E}}
\def\H{\mathcal{H}}
\def\R{\mathbb{R}}
\def\N{\mathbb{N}}
\def\C{\mathcal{C}}
\def\F{\mathcal{F}}
\def\P{\mathbb{P}}
\begin{document}

\title{SPDEs with linear multiplicative fractional noise: continuity in law with respect to the Hurst
index}

\author[1]{Luca M. Giordano}
\author[2]{Maria Jolis}
\author[3]{Llu\'is Quer-Sardanyons \thanks{Corresponding author}}

\affil[1]{Department of Mathematics, University of Milano, Via C.
Saldini 50, 20133 Milano, Italy, and Departament de Matem\`atiques,
Universitat Aut\`{o}noma de Barcelona, 08193 Bellaterra, Catalonia,
Spain. E-mail address: luca.giordano@unimi.it}

\affil[2,3]{Departament de Matem\`atiques, Universitat Aut\`{o}noma
de Barcelona, 08193 Bellaterra, Catalonia, Spain. E-mail addresses:
mjolis@mat.uab.cat, quer@mat.uab.cat.}

\date{\today}
\maketitle

\begin{abstract}
In this article, we consider the one-dimensional stochastic wave and
heat equations driven by a linear multiplicative Gaussian noise
which is white in time and behaves in space like a fractional
Brownian motion with Hurst index $H \in (\frac14,1)$. We prove that
the solution of each of the above equations is continuous in terms
of the index $H$, with respect to the convergence in law in the
space of continuous functions. The proof is based on a tightness
criterion on the plane and  Malliavin calculus techniques in order
to identify the limit law.
\end{abstract}

\medskip

\noindent {\em MSC 2010:} 60B10; 60H07; 60H15

\smallskip

\noindent {\em Keywords:}  fractional noise; stochastic heat
equation; stochastic wave equation; weak convergence; Wiener chaos
expansion


\section{Introduction}

In this article, we consider the Hyperbolic Anderson Model
\begin{equation}
\left\{\begin{array}{l} \displaystyle \frac{\partial^2
u^H}{\partial t^2}(t,x)  =  \displaystyle \frac{\partial^2
u^H}{\partial x^2}(t,x)+u^H(t,x)\dot{W}^H(t,x),
\quad (t,x)\in \R_+\times \R, \\[2ex]
\displaystyle u^H(0,x)  =  \eta,\; x\in \R, \\[1ex]
\displaystyle \frac{\partial u^H}{\partial t}(0,x)  =  0, \;
x\in \R,
\end{array}\right. \label{wave} \tag{SWE}
\end{equation}
and the Parabolic Anderson Model
\begin{equation}
\left\{\begin{array}{l}
\displaystyle \frac{\partial u^H}{\partial t}(t,x)  = \displaystyle  \frac{1}{2}\, \frac{\partial^2 u^H}
{\partial x^2}(t,x) + u^H(t,x)\dot{W}^H(t,x), \quad (t,x)\in \R_+\times \R, \\[2ex]
\displaystyle u^H(0,x)  =  \eta, \; x\in \R.
\end{array}\right. \label{heat} \tag{SHE}
\end{equation}
The initial condition $\eta\in \R$ is assumed to be constant. The
random perturbation $\dot{W}^H$ is a Gaussian noise which is white
in time and behaves in space like a fractional Brownian motion with
Hurst index $H\in (\frac14,1)$. More precisely, it is given by a
family of centered Gaussian random variables $W^H=\{W^H(\varphi),\,
\varphi \in \C^\infty_0(\R_+\times \R)\}$, indexed in the space of
$\C^\infty$ functions with compact support on $\R_+\times \R$, with
the following covariance structure:
\begin{equation*}
\EE \left[W^H(\varphi)W^H(\psi)\right] = \int_{0}^{\infty}\int_{\R}
\F \varphi(t,\cdot)(\xi) \overline{\F \psi(t,\cdot)(\xi)}\mu_H(d\xi)
dt,
\end{equation*}
for any $\varphi,\psi \in \C^\infty_0(\R_+\times \R)$, where the
measure $\mu_H$ is given by $\mu_H(d\xi)=c_H\, |\xi|^{1-2H} d\xi$,
with
\begin{equation}\label{eq:103}
c_H=\frac{\Gamma(2H+1) \sin(\pi H)}{2\pi}.
\end{equation}
We denote by $\F$ the Fourier transform in the space variable, which is defined by
\[
    \F f (\xi)=\int_\R e^{-i \xi x}f(x)dx, \quad f\in L^1(\R).
\]

The solutions of \eqref{wave} and \eqref{heat} are understood in the
mild It\^o sense, as follows. We fix a time horizon $T>0$ and we
denote by $\{\F_t^H,\, t\geq 0\}$ the filtration generated by the
noise $W^H$ (conveniently completed). Then, we say that an adapted
and jointly measurable random field $u^H=\{u^H(t,x),\,
(t,x)\in[0,T]\times \R\}$ solves \eqref{wave} (resp. \eqref{heat})
if it holds, for all $(t,x)\in [0,T]\times \R$:
\begin{equation}
u^H(t,x)=\eta + \int_0^t\int_\R G_{t-s}(x-y)u^H(s,y) W^H(ds,dy),\quad \mathbb{P}\text{-a.s.}
\label{eq:222}
\end{equation}
where $G$ is the fundamental solution of the
wave  (resp. heat) equation in $\R$. We recall that
\begin{equation}
G_t(x)=\begin{cases}
\frac{1}{2}1_{\{|x|<t\}}(x), & \text{wave equation,} \\
\\
\dfrac{1}{(2\pi t)^{\frac{1}{2}}}\exp\Big(-\dfrac{|x|^2}{2t}
\Big), & \text{heat equation}. \\
\end{cases}
\label{eq:3}
\end{equation}
The stochastic integral appearing in \eqref{eq:222} is understood in the It\^o
sense and will be described in detail in Section \ref{subsec: stochastic integral SPDE}.

\smallskip

In this paper, we are interested in studying the continuity in law,
in the space of continuous functions, of the solutions to
\eqref{wave} and \eqref{heat} with respect to the Hurst index $H$.
More precisely, the main result of the paper is the following:
\begin{theorem}
 \label{thm:main}
Let $T>0$. Let $H_0\in
(\frac{1}{4},1)$ and $\{H_n,\,n\geq 1\}\subset (\frac 14,1)$ be any
sequence converging to $H_0$. Then,  $u^{H_n}$ converges to $u^{H_0}$, as $n\to\infty$, in law
in the space $\C([0,T]\times \R))$ of continuous functions, endowed with the metric of uniform convergence on compact
sets.
 \end{theorem}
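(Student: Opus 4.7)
The plan is the classical tightness-plus-finite-dimensional-distributions strategy for weak convergence in $\C([0,T]\times\R)$ endowed with the compact-open topology. Since this topology is generated by the restrictions to $\C([0,T]\times[-M,M])$ for $M\geq 1$, it suffices to check, for each such $M$, that (a) the family $\{u^{H_n}\}_n$ is tight on $\C([0,T]\times[-M,M])$ and (b) its finite-dimensional distributions converge to those of $u^{H_0}$.

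For (a), I would establish a bivariate Kolmogorov-type estimate of the form
\[
\sup_n \EE\bigl[|u^{H_n}(t,x)-u^{H_n}(s,y)|^p\bigr] \;\leq\; C_{p,M}\bigl(|t-s|^{\alpha p}+|x-y|^{\beta p}\bigr),
\]
for $p$ large enough and some $\alpha,\beta>0$. The argument starts from \eqref{eq:222}, expresses the space and time increments of $u^{H_n}$ as stochastic integrals of increments of the Green kernel $G$ against $W^{H_n}$, applies Burkholder--Davis--Gundy together with the Fourier representation of the covariance (against $\mu_{H_n}$), and closes the estimate via a Gronwall-type iteration driven by $u^{H_n}$ itself. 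Because $c_H$ and the relevant spectral integrals depend continuously on $H$ on $(1/4,1)$ and the sequence $H_n$ eventually lies in a compact sub-interval, the resulting constants can be taken uniform in $n$.

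For (b), by the Cramér--Wold device, it is enough to show convergence in law of arbitrary real linear combinations $\sum_j\lambda_j u^{H_n}(t_j,x_j)$. Here I would use the Wiener chaos decomposition
\[
u^H(t,x) \;=\; \eta\sum_{k\geq 0}I_k^H(\tilde f_{k,t,x}),
\]
where $\tilde f_{k,t,x}$ is the symmetric kernel obtained by symmetrising the time-ordered iterated convolutions of $G$ and $I_k^H$ is the $k$-th multiple Wiener--Itô integral against $W^H$. Convergence in law then reduces to: (i) a uniform chaos-tail bound $\sum_{k\geq N}k!\,\|\tilde f_{k,t,x}\|_{\H_{H_n}^{\otimes k}}^2\to 0$ as $N\to\infty$; and (ii) chaos-by-chaos convergence $I_k^{H_n}(\tilde f_{k,t,x})\to I_k^{H_0}(\tilde f_{k,t,x})$ in law. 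Both ingredients rely on the fact that the norms and inner products of these kernels in $\H_{H_n}^{\otimes k}$ admit explicit Fourier representations against $\mu_{H_n}^{\otimes k}$, so continuity in $H$ ultimately follows from dominated convergence applied to $c_{H_n}|\xi|^{1-2H_n}$.

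The main obstacle I anticipate is step (ii): the integrals $I_k^{H_n}$ live, a priori, on distinct Gaussian probability spaces, hence joint convergence in law cannot be reduced to $L^2$ convergence within a single Hilbert space. I would overcome this by invoking a convergence criterion for sequences of multiple Wiener integrals with respect to varying isonormal Gaussian processes, which in the end requires only convergence of all mixed Hilbert space inner products of the kernels, together with the uniform tail bound from (i). The bulk of the remaining analytical work is then the uniform-in-$n$ control of the spectral integrals involving $|\xi|^{1-2H_n}$ at both low and high frequencies over compact sub-intervals of $(1/4,1)$, which is where the hyperbolic and parabolic cases will be handled by slightly different estimates on $\F G_t$.
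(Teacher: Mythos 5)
Your tightness step is in line with the paper's (uniform-in-$n$ Kolmogorov moment bounds plus a two-parameter tightness criterion), although be aware that for $H_n<\tfrac12$ the Burkholder--Davis--Gundy bound involves the Fourier transform of the \emph{whole} integrand process $G_{t-\cdot}(x-\cdot)u^{H_n}$, not just of $G$, so the paper is forced to run the uniform estimates through the Picard iterations (with the normalizing constant $\tilde c_H$ inserted in the $\chi^p_H$-norm) rather than closing a Gronwall loop directly on $u^{H_n}$; this is a real, not cosmetic, complication, but your overall plan there is sound.

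The genuine gap is in your step (b)(ii). You correctly identify the obstacle --- the integrals $I_k^{H_n}$ live over different isonormal Gaussian processes --- but the tool you propose to resolve it does not exist in the generality you need: there is no criterion guaranteeing convergence in law of multiple Wiener integrals of order $k\geq 2$ from ``convergence of all mixed Hilbert space inner products of the kernels'' alone. Already for $k=2$ the law of $I_2(f)$ is determined by the eigenvalues of the Hilbert--Schmidt operator associated with $f$, not by norms or inner products of kernels, and the available asymptotic criteria of Nualart--Peccati/Peccati--Tudor type apply only when the limit is Gaussian, which $u^{H_0}(t,x)$ is not. Moreover, chaos-by-chaos convergence \emph{in law} plus a uniform tail bound does not yield convergence in law of the sums (or of linear combinations over several points) without joint convergence of the chaos components, and a method-of-moments repair is blocked because the moments of these Anderson-type solutions grow too fast for moment determinacy. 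The paper circumvents all of this by a coupling: the spectral representation $W^H(1_{[0,t]\times[0,x]})=\sqrt{c_H}\int_0^t\int_{\R}\F[1_{[0,x]}](\xi)\,|\xi|^{\frac12-H}\,\tilde W(ds,d\xi)$ realizes the entire family $\{W^H,\ H\in(0,1)\}$ on a single probability space carrying one complex Gaussian measure $\tilde W$, and Theorem \ref{teo: representation multiple integral} transfers every multiple integral $I_k^{H}(g_k)$ into $\tilde I_k(\hat g_k)$ on that fixed space. Combined with the finite chaos expansion of the Picard iterates (which requires the It\^o--Skorohod equivalence, extended in the paper to $H\geq\tfrac12$, where $\H_H$ contains distributions) and the uniform-in-$H$ convergence of the iterates, the identification reduces to honest $L^2(\Omega)$ convergence of explicit spectral integrals, proved by dominated convergence on $c_{H_n}^{j}|\xi_1|^{\frac12-H_n}\cdots|\xi_j|^{\frac12-H_n}$ --- with a nontrivial construction of integrable dominating functions, split according to $H_n<\tfrac12$ and $H_n\geq\tfrac12$. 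Without this single-probability-space representation (or some equivalent device), your step (ii) cannot be completed as stated.
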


We point out that we restrict to Hurst indices greater than $\frac14$. This is due to the fact
that, as proved in \cite[Prop. 3.7]{intermittency}, $H>\frac14$ is also a necessary condition in order to
have a solution to \eqref{wave} and \eqref{heat}.

The above theorem can be considered a continuation of the results
obtained by the authors in \cite{paper1} (see Theorem 4.2 therein),
where the same kind of problem has been addressed for
one-dimensional quasi-linear stochastic wave and heat equations with
an additive fractional noise as the one described above. The proof
of the latter result, which is indeed valid for any $H_0\in (0,1)$,
is based on the fact that the solution of the underlying SPDE can be
represented as the image of the stochastic convolution through a
continuous functional on the space $\C([0,T]\times \R))$. In the
present paper, this technique cannot be applied anymore because of
the structure of the linear multiplicative noise. Instead, we
consider the following strategy.

First, we prove that the sequence of probability measures induced by
$\{u^{H_n},\, n\geq 1\}$ is tight in the space $\C([0,T]\times \R)$
(see Section \ref{sec:tightness}). Here, we split the proof taking
into account that the sequence of Hurst indices is contained in
$(\frac14,\frac12]$ or $[\frac12,1)$, for the definition and
properties of the stochastic integral in \eqref{eq:222} differ
significantly between those two cases. Indeed, the main difficulty
here is concentrated in the {\it{rough}} case, where we carefully
extend some moment estimates appearing in \cite{intermittency} in
order to make them uniform with respect to $H\in (\frac14,\frac12)$.

Secondly, in order to identify the limit law, we prove the
convergence of the corresponding finite dimensional distributions
(see Section \ref{subsec: limit identification LMC}). The main
problem here comes from the fact that the solution $u^H$ is not a
Gaussian process, and so identifying its covariance structure is not
enough to characterize its law. However, thanks to a spectral
representation in law of our noise $W^H$ in terms of a
complex-valued Gaussian measure (extending of some classical results
in \cite{Pipiras 2000}), we are able to define the whole family of
noises $\{W^H,\, H\in (0,1)\}$ in a single probability space and
then check that, for any fixed $(t,x)\in [0,T]\times \R$,
$u^{H_n}(t,x)$ converges to $u^{H_0}(t,x)$ in $L^2(\Omega)$. For
this, we will use techniques of the Malliavin calculus, precisely
the Wiener chaos expansion of the mild Skorohod solutions of
\eqref{wave} and \eqref{heat}. In the process of applying this
methodology, we provide three preliminary results which have their
own interest and turn out to be crucial in our main result's proof:

\begin{itemize}\itemsep0cm
\item[(i)] For any $H\in (0,1)$, we prove that any multiple Wiener
integral with respect to $W^H$ admits a representation as a multiple
Wiener integral with respect to the above-mentioned complex-valued
Gaussian measure (see Theorem \ref{teo: representation multiple
integral}).

\item[(ii)] For any $H\in (\frac 14,1)$, we prove an equivalence result
between It\^o and Skorohod stochastic integrals with respect to
$W^H$ (see Theorem \ref{teo: equivalence ito-skorohod}). This result
has already been proved in \cite[Thm. 4.2]{intermittency} for the
case $H<\frac12$, and we extend it to $H\geq \frac12$. We point out
that the latter case, in which the noise is more regular, entails
some extra difficulties due to the fact that the underlying Hilbert
space associated to the noise's covariance contains distributions.
An important consequence of Theorem \ref{teo: equivalence
ito-skorohod} is that mild It\^o and Skorohod solutions to
\eqref{wave} (resp. \eqref{heat}) coincide, and the corresponding
Picard iteration scheme admits a (finite) Wiener chaos
decomposition.

\item[(iii)] In the setting $H\in (\frac14,\frac12)$, we prove a
Sobolev embedding-type result for the norms of the Banach space on
which we define our solutions (see Lemma \ref{lemma: sobolev
embedding}). This result is similar to classical embedding results,
e.g. the ones appearing in \cite{hitch}, but takes into account the
different nature of the Sobolev norm in our setting.
\end{itemize}

The above strategy will be made clearer in Section \ref{subsec:
limit identification LMC} below, but let us remark at this point
that the main strategy in this part of the paper does not require a
separate analysis for the cases $H<\frac12$ and $H\geq \frac12$.
Furthermore, the methodology used in both results on tightness and
the limit identification cover equations \eqref{wave} and
\eqref{heat} at the same time.

In the case of the stochastic heat equation \eqref{heat} with
$H>\frac12$, the result in Theorem \ref{thm:main} is a particular
case of \cite[Thm. 1]{Bezdek}, where the author considers a general
non-linear coefficient $\sigma(u^H(t,x))$ in front of the noise. We
believe that such diffusion coefficient could be also considered in
the case of the wave equation with $H>\frac12$, but we have chosen
to stick to the linear multiplicative noise in order to find a
unified result that covers also the case $H<\frac12$, which is more
mathematically demanding.

Concerning other related results, we point out the recent article
\cite{Koch}, in which the authors prove strong regularity properties
in $H\in (0,1)$ of the  Mandelbrot-van Ness representation of the
fractional Brownian motion. As a consequence, it is proved that the
solution of a scalar stochastic differential equation driven by the
fractional Brownian motion is differentiable with respect to the
Hurst parameter.

Finally, we also mention that continuity in law with respect to the
Hurst index has been focused in other type of contexts beyond
stochastic equations. For instance, in the series of papers
\cite{JV2,JV3,JV4}, the authors study weak continuity with respect
to $H$ for different types of integrals with respect to  fractional
Brownian motion. In \cite{JV1,Xiao}, the same kind of continuity
property has been tackled for the local time of the fractional
Brownian motion and other Gaussian fields. Eventually, in the recent
paper \cite{Ait}, the continuity property has been shown for
additive functionals of the sub-fractional Brownian motion.

The paper is organized as follows. In Section \ref{sec: prel}, we
give some preliminary tools that will be needed throughout the
paper. Namely, we introduce the basic elements of the Malliavin
calculus, we provide a new integral representation for the multiple
Wiener integral with respect to $W^H$, we recall the construction of
the stochastic integral with respect to $W^H$ and, finally, we
report about the existing well-posedness results for equations
\eqref{wave} and \eqref{heat}. Section \ref{sec:tightness} is
devoted to prove the tightness property of the family of laws
induced by the solution $u^H$, $H\in (\frac14,1)$. In Section
\ref{subsec: limit identification LMC}, we deal with the limit
identification, which allows us to conclude the proof of Theorem
\ref{thm:main}. In the Appendix, we collect some technical results
and a tightness criterion that are used in the paper.


\section{Preliminaries}
\label{sec: prel}

\subsection{Malliavin calculus}
\label{sec:Malliavin}

In this section, we recall some elements of Malliavin
calculus and a useful result of \cite{paper greeks}. We refer the
reader to \cite{nualart book} for more details. We will work in the
Gaussian space determined by the noise $W^H$, which is defined as follows.

Let $\langle\varphi,\psi\rangle_H:=\EE
\left[W^H(\varphi)W^H(\psi)\right]$ and define $\H_H$ as the
completion of $\C^\infty_0(\R_+\times \R)$ with respect to the inner
product $\langle\cdot,\cdot\rangle_H$. Then $\H_H$ defines a Hilbert
space and it is well-known that, if $H\leq \frac12$, it is a space
of functions, while for $H>\frac12$ it contains distributions (see
\cite[Thm. 4.3]{Basse} and \cite[Prop. 4.2]{maria space H^H}). Then,
$\{W^H(\varphi),\, \varphi \in \C^\infty_0(\R_+\times \R)\}$ can be
extended to a family of Gaussian random variables indexed on the
space $\H_H$, which we denote again by $W^H=\{W^H(\varphi),\,
\varphi\in \H_H\}$. This family defines an isonormal Gaussian
process on the Hilbert space $\H_H$: for any $\varphi\in \H_H$,
$W^H(\varphi)$ is a centered Gaussian variable and
\[
\EE\left[W^H(\varphi),W^H(\psi)\right]=\langle\varphi,\psi\rangle_H,
\quad \varphi,\psi \in \H_H.
\]
Let $\mathcal{G}^H$ be the $\sigma$-algebra generated by
$\{W^H(\varphi), \, \varphi \in \mathcal{H}_H\}$. Then, any
$\mathcal{G}^H$-measurable random variable $F\in L^2(\Omega)$ admits
the representation
\begin{equation}
F=\sum_{n\geq 0} J_n^H F,
\label{eq: Wiener Chaos basic}
\end{equation}
where $J_n^H F$ is the projection of $F$ on the $n$-th Wiener chaos
space $\mathbb{H}_{H,n}$, for $n\geq 1$, and $J^H_0F=\EE[F]$.

We denote by $I_n^H$ the multiple Wiener integral of order $n$ with
respect to $W^H$, which defines a linear and continuous operator
from $\H_H^{\otimes n}$ onto $\mathbb{H}_{H,n}$. We briefly recall
the construction of $I_n^H$, since we will use some of its steps in the
sequel. Let $\{e_k,\, k\geq 1\}$ be an orthonormal basis of
$\mathcal{H}_H$ and consider an {\it{elementary element}} of
$\mathcal{H}_H^{\otimes n}$ of the form
\begin{equation}
\varphi =  e_{i_1}\hat{\otimes} \cdots \hat{\otimes}
\, e_{i_n}, \label{eq:7}
\end{equation}
where $\hat{\otimes}$ denotes the symmetrized tensor product, for
some $i_1,\dots,i_n\geq 1$.
Recall that the set of finite linear combinations of elementary
elements is dense in $\mathcal{H}_H^{\otimes n}$. An elementary
element of the form \eqref{eq:7} can be more conveniently written as
\begin{equation}
    \label{eq: elementary function}
    \varphi = e_{j_1}^{\otimes {k_1}}\hat{\otimes} \dots \hat{\otimes}\,
    e_{j_m}^{\otimes {k_m}},
\end{equation}
where all $j_1,\dots j_m\geq 1$ are different and
$k_1+\cdots+k_m=n$. The $n$-th order multiple Wiener integral of
$\varphi$ is defined as follows:
\begin{equation}
    \label{eq: multiple wiener integral}
    I^H_n(\varphi) = P_{k_1}\big(W^H(e_{j_1})\big)\cdots
    P_{k_m}\big(W^H(e_{j_m})\big),
\end{equation}
where we denote by $P_{k}$ the normalized $k$-th Hermite polynomial.
The multiple Wiener integral is then extended by linearity to all
finite linear combinations of elementary elements, and finally
extended to the whole space $\H_H$ by density.

We also remind that any element in the $n$-th chaos
$\mathbb{H}_{H,n}$ can be represented as $I_n^H(f)$, for some $f\in
\H_H^{\otimes n}$. Hence, representation \eqref{eq: Wiener Chaos
basic} can be written as follows:
\[
F= \EE[F] + \sum_{n\geq 1} I_n^H(f_n),
\]
where $f_n\in \H_H^{\otimes n}$, for all $n\geq 1$. We recall that,
for any $f\in\mathcal{H}_H^{\otimes n},$
$$\EE\left[|I^H_n(f)|^2\right]=\EE\left[|I^H_n(\tilde{f})|^2\right]=n! \,
\|\tilde{f}\|^2_{\mathcal{H}_H^{\otimes n}},$$ where $\tilde{f}$
stands for the symmetrization of $f$. We also remind that, for a
general element $f$ of $\mathcal{H}_H^{\otimes n}$, the norm
$\|f\|_{\mathcal{H}_H^{\otimes n}}$ is given by
    \begin{equation*}
    \|f\|^2_{\mathcal{H}_H^{\otimes n}}=\int_{\R_+^n}\int_{\R^n} |\F f(t_1,\cdot,t_2,\cdot,\dots,t_n,\cdot)(\xi_1,\dots,\xi_n)|^2 \mu(d\xi_1)\cdots \mu(d\xi_n)dt_1\cdots dt_n.
    \end{equation*}
    Here, we still denoted by $\F$ the Fourier transform on the space of tempered
    distributions in $\R^n$.

\medskip

Let $A\in\mathcal{B}([0,\infty))$. We define, for every $f\in
\mathcal{H}_H^{\otimes n}$, the element $f1^{\otimes n}_A\in
\mathcal{H}_H^{\otimes n}$ in the following way: if $f$ is a
function, we define it obviously as the function $f1^{\otimes n}_A$.
If $f$ is a general element of $\mathcal{H}_H^{\otimes n}$, we take
any sequence $\{f_k,\, k\geq 1\}$ of functions in
$\mathcal{H}_H^{\otimes n}$ such that $f_k \to f$ in
$\mathcal{H}_H^{\otimes n}$, as $k\to\infty$, and we set
$$f1_A^{\otimes n}:=\lim_{k\to\infty} f_k 1_A^{\otimes n}.$$
This limit exists; indeed, we have that $\{f_k,\, k\geq 1\}$ is
Cauchy in $\mathcal{H}_H^{\otimes n}$ and
$$\|f_k1_A^{\otimes n}-f_\ell1_A^{\otimes n}\|_{\mathcal{H}_H^{\otimes n}}\leq
\|f_k-f_\ell\|_{\mathcal{H}_H^{\otimes n}},$$ which implies that
$\{f_k1_A^{\otimes n}, \, k\geq 1\}$ is also a Cauchy sequence in
$\mathcal{H}_H^{\otimes n}$. The limit clearly does not depend on
the chosen approximating sequence. On the other hand, we define
the $\sigma$-field
$$\mathcal F_A^H=\sigma\{W^H(1_{D} \varphi),\, D\in\mathcal
B_0(\R_+), \, D\subset A,\, \varphi \in \mathcal{C}^\infty_0(\R)\}\vee \mathcal N,$$
where $\mathcal N$
are the null sets of $\mathcal{F}$ and $\mathcal{B}_0(\R_+)$ are the
bounded Borel sets of $\R_+$.

We have the following result:
\begin{lemma}
\label{lemma: A.1} Let $F\in L^2(\Omega)$ with Wiener chaos
expansion given by $F= \EE[F] + \sum_{n\geq 1} I_n^H(f_n)$, where
$f_n\in \mathcal{H}_H^{\otimes n}$ are symmetric, and let $A\in
\mathcal{B}([0,\infty))$. Then, it holds
$$\EE\left[F|\F_A^H\right]=\sum_{n\geq 0} I_n^H\left(f_n1_{A}^{\otimes n}\right).$$
\end{lemma}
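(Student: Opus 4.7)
The plan is to reduce the statement to a single Wiener chaos and then to elementary tensors in a well-chosen orthonormal basis. By the $L^2$-continuity of conditional expectation and the orthogonality of the chaoses $\mathbb{H}_{H,n}$, it suffices to show, for each $n \geq 1$ and every symmetric $f \in \mathcal{H}_H^{\otimes n}$, the equality $\EE[I_n^H(f)\,|\,\F_A^H] = I_n^H(f 1_A^{\otimes n})$; summing over $n$ then yields the claim.

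The key observation is that the inner product on $\mathcal{H}_H$ factors as a time integral, so $\mathcal{H}_H$ can be identified with the Bochner space $L^2([0,\infty);\mathcal{H}_0)$, where $\mathcal{H}_0$ denotes the spatial Hilbert space associated with $\mu_H$. This yields an orthogonal decomposition $\mathcal{H}_H = \mathcal{H}_H^A \oplus \mathcal{H}_H^{A^c}$, where $\mathcal{H}_H^A$ (resp. $\mathcal{H}_H^{A^c}$) is the closed subspace of elements time-supported in $A$ (resp. $A^c$). I will pick an orthonormal basis $\{e_k,\, k\geq 1\}$ of $\mathcal{H}_H$ in which each $e_k$ lies in exactly one of the two subspaces — obtained by juxtaposing an ONB of $L^2(A)\otimes \mathcal{H}_0$ with one of $L^2(A^c)\otimes \mathcal{H}_0$ — and let $I_A$ denote the indices for which $e_k \in \mathcal{H}_H^A$. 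Approximating each such $e_k$ by test functions with time-support in the appropriate set, one checks that $W^H(e_k)$ is $\F_A^H$-measurable for $k \in I_A$, while for $k \notin I_A$ the Gaussian $W^H(e_k)$ is uncorrelated in $L^2(\Omega)$ with every generator of $\F_A^H$ and, being jointly Gaussian with them, is independent of $\F_A^H$.

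The computation on an elementary tensor $\varphi = e_{j_1}^{\otimes k_1} \hat{\otimes} \cdots \hat{\otimes}\, e_{j_m}^{\otimes k_m}$ in this basis is then immediate from \eqref{eq: multiple wiener integral}: $I_n^H(\varphi) = \prod_{l=1}^m P_{k_l}(W^H(e_{j_l}))$. Conditioning on $\F_A^H$, the factors with $j_l \in I_A$ are preserved, while those with $j_l \notin I_A$ are independent of $\F_A^H$ and have mean zero, because $P_{k_l}$ is a normalized Hermite polynomial of degree $k_l \geq 1$. Thus $\EE[I_n^H(\varphi)\,|\,\F_A^H]$ equals $I_n^H(\varphi)$ when all $j_l \in I_A$, and vanishes otherwise. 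The right-hand side $I_n^H(\varphi 1_A^{\otimes n})$ matches exactly: the factor $e_{j_l} 1_A$ equals $e_{j_l}$ when $j_l \in I_A$ and $0$ otherwise, so $\varphi 1_A^{\otimes n}$ is $\varphi$ in the first case and $0$ (by multilinearity of $\hat\otimes$) in the second.

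To finish, I will extend the identity from finite linear combinations of such elementary tensors (which are dense in $\mathcal{H}_H^{\otimes n}$) to all of $\mathcal{H}_H^{\otimes n}$ using the isometry $\EE[|I_n^H(g)|^2]=n!\,\|\tilde g\|^2_{\mathcal{H}_H^{\otimes n}}$, the contraction $\|g 1_A^{\otimes n}\|_{\mathcal{H}_H^{\otimes n}} \leq \|g\|_{\mathcal{H}_H^{\otimes n}}$ (already used in the excerpt to define $g 1_A^{\otimes n}$), and the $L^2$-continuity of conditional expectation. The main obstacle I foresee is the bookkeeping to make sure that the factorwise prescription $e_{j_l} \mapsto e_{j_l} 1_A$ on elementary tensors matches the abstract limit definition of $g 1_A^{\otimes n}$: this is handled by approximating each basis vector in $\mathcal{H}_H^A$ (resp. $\mathcal{H}_H^{A^c}$) by test functions whose time-support is already contained in $A$ (resp. $A^c$), so that multiplication by $1_A$ acts exactly on the approximants and the two definitions coincide in the limit.
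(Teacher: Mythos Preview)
Your argument is correct and rests on the same core ideas as the paper's (which simply defers to \cite[Lem.~A.1]{intermittency}): decompose $\mathcal{H}_H$ orthogonally according to time-support in $A$ versus $A^c$, use the Hermite-polynomial representation \eqref{eq: multiple wiener integral} together with Gaussian independence/measurability of the factors, and pass to the limit via the isometry and the contraction $\|g1_A^{\otimes n}\|\le\|g\|$. The only organizational difference is that the reference approximates symmetric elements by linear combinations of pure tensor powers $f^{\otimes n}$ (then splits $f=f1_A+f1_{A^c}$ and expands $P_n$), whereas you choose an orthonormal basis adapted to the decomposition so that each elementary tensor is already ``pure'' and the conditioning is immediate; this is a cosmetic variation, not a genuinely different route.
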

\begin{proof}
The proof follows exactly as that of \cite[Lem. A.1]{intermittency}.
We only need to observe that, if $h\in\mathcal{H}_H^{\otimes n}$ is
symmetric, it can be written as the limit of a sequence of symmetric
functions, which in turn can be written as the limit of finite
linear combinations of functions of the type $f^{\otimes n}$, where
$f\in \mathcal{H}_H$ and $\|f\|_{\mathcal{H}_H}=1$.
\end{proof}

\medskip

Let us now introduce the Malliavin derivative operator and the
Skorohod integral. Let $\mathcal{S}$ be the class of random
variables $F$ of the form
    \begin{equation*}
        F=f(W^H(\varphi_1),\dots,W^H(\varphi_n)),
    \end{equation*}
    where $f\in\C^\infty_b(\R^n)$ and $\varphi_j\in \mathcal{H}_H$, for every $j=1,\dots,n$. For any $F\in \mathcal{S}$, we define the \textit{Malliavin derivative} of $F$ as the $\mathcal{H}_H$-valued random variable $DF$ given by
    \begin{equation*}
       DF=\sum_{j=1}^{n} \frac{\partial f}{\partial x_j} (W^H(\varphi_1),\dots,W^H(\varphi_n))\varphi_j.
    \end{equation*}
If we endow $\mathcal{S}$ with the norm
$\|F\|_{\mathbb{D}^{1,2}}:=\EE\left[|F|^2\right]^{\frac 12}+
\EE\Big[ \|DF\|^2_{\mathcal{H}_H}\Big]^{\frac 12}$, it turns out
that the operator $D$ can be extended to the completion of
$\mathcal{S}$ with respect to $\|\cdot\|_{\mathbb{D}^{1,2}}$, which
we will denote by $\mathbb{D}^{1,2}$. We define now the
\textit{divergence operator} $\delta$, which is the adjoint of $D$.
The divergence operator is defined on its domain
$\text{Dom}(\delta)$, which is the space of $\mathcal{H}_H$-valued
random variables such that $u\in L^2(\Omega;\mathcal{H}_H)$ and
$$\left|\EE\left[\langle DF, u\rangle_{H}\right]\right|\leq c \EE\left[|F|^2\right]^{\frac 12}, \quad \text{ for all } F\in \mathbb{D}^{1,2},$$
where the constant $c$ depends on $u$.  Being the adjoint of $D$,
the divergence operator $\delta(u)$ is defined for any $u \in
\text{Dom}(\delta)$ by the duality relation, holding for every $F\in
\mathbb{D}^{1,2}$:
    $$\E{ \langle DF, u\rangle_{H} }=\Es{F\delta(u)}.$$
From the duality relation one can deduce that $\Es{\delta(u)}=0$,
for every $u\in \text{Dom}(\delta)$. For any $u\in
\text{Dom}(\delta)$, $\delta(u)$ is called the \textit{Skorohod
integral} of $u$ and is denoted by
    $$\int_{0}^{\infty}\int_{\R} u(t,x)W^H(\delta t, \delta x):=\delta(u).$$
We will need the following two results involving the Skorohod
integral (cf. Propositions 1.3.3 and 1.3.6 in \cite{nualart book}).
\begin{lemma}
        \label{lemma: prop A.3}
        Let $F\in \mathbb{D}^{1,2}$ and $u\in \mathrm{Dom}(\delta)$ such that
        $Fu\in L^2(\Omega; \mathcal{H}_H)$. Then, $Fu\in \mathrm{Dom}(\delta)$ and it holds
        $$\delta(Fu)=F\delta(u)-\langle DF,u\rangle_{H}.$$

\end{lemma}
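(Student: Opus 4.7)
The proof rests on the defining duality property of the divergence operator $\delta$. To conclude that $Fu \in \mathrm{Dom}(\delta)$ with the claimed formula, it suffices to verify the identity
\[
\mathbb{E}\bigl[\langle DG, Fu\rangle_{H}\bigr] = \mathbb{E}\bigl[G\bigl(F\delta(u) - \langle DF, u\rangle_{H}\bigr)\bigr]
\]
for every test variable $G$ in the cylindrical class $\mathcal{S}$, and to check that the right-hand side depends on $G$ in an $L^{2}(\Omega)$-continuous way. Since $\mathcal{S}$ is dense in $\mathbb{D}^{1,2}$, this will characterize $\delta(Fu)$ as the asserted random variable.

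The key ingredient I would establish first is a product (Leibniz) rule for the Malliavin derivative: for $G \in \mathcal{S}$ and $F \in \mathbb{D}^{1,2}$, one has $GF \in \mathbb{D}^{1,2}$ and
\[
D(GF) = G\, DF + F\, DG.
\]
When $F \in \mathcal{S}$ this is immediate from the classical chain rule applied to smooth cylindrical variables. For general $F \in \mathbb{D}^{1,2}$, I would approximate $F$ by a sequence $F_{n}\in \mathcal{S}$ with $F_{n}\to F$ in $\mathbb{D}^{1,2}$; since $G$ and each coordinate of $DG$ are bounded (recall $G$ has a $\mathcal{C}_{b}^{\infty}$ coefficient), one gets $GF_{n}\to GF$ in $L^{2}(\Omega)$ and $D(GF_{n}) = G\, DF_{n} + F_{n}\, DG \to G\, DF + F\, DG$ in $L^{2}(\Omega;\mathcal{H}_{H})$, so the closedness of $D$ yields the claim.

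Now I would apply the duality relation that defines $\delta$ to the test variable $GF \in \mathbb{D}^{1,2}$ and the integrand $u \in \mathrm{Dom}(\delta)$:
\[
\mathbb{E}\bigl[\langle D(GF), u\rangle_{H}\bigr] = \mathbb{E}\bigl[GF\,\delta(u)\bigr].
\]
Substituting the Leibniz identity on the left and using the pointwise identity $\langle F\, DG, u\rangle_{H} = \langle DG, Fu\rangle_{H}$ (since $F$ is a scalar in the inner product), a straightforward rearrangement produces the target identity displayed above.

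The main (and essentially only) subtlety is to invoke the definition of the divergence at the end. For this one needs to bound $|\mathbb{E}[\langle DG, Fu\rangle_{H}]|$ by a multiple of $\mathbb{E}[|G|^{2}]^{1/2}$, which via Cauchy--Schwarz on the right-hand side reduces to checking that $F\delta(u) - \langle DF, u\rangle_{H}$ belongs to $L^{2}(\Omega)$. This integrability is implicitly built into the hypotheses (or is verified separately in each concrete application, as will be done in the sequel when the lemma is invoked). Once this bound is at hand, the assumption $Fu \in L^{2}(\Omega; \mathcal{H}_{H})$ combined with the extension of the identity to all $G \in \mathbb{D}^{1,2}$ by density closes the proof and gives $\delta(Fu) = F\delta(u) - \langle DF, u\rangle_{H}$.
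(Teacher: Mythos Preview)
The paper does not supply its own proof of this lemma; it simply cites Proposition~1.3.3 of Nualart's book. Your argument is precisely the standard one given there: establish the Leibniz rule $D(GF)=G\,DF+F\,DG$ for $G\in\mathcal{S}$ and $F\in\mathbb{D}^{1,2}$ via approximation and closedness of $D$, apply the duality relation with the test variable $GF$, and rearrange. So your approach matches the cited reference.

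Your observation about the subtlety is well taken and worth emphasizing: the statement as written in the paper omits the hypothesis that $F\delta(u)-\langle DF,u\rangle_{H}\in L^{2}(\Omega)$, which is explicitly assumed in Nualart's Proposition~1.3.3 and is needed to close the duality argument. You are right that in the paper's only application of this lemma (the proof of Theorem~\ref{teo: equivalence ito-skorohod}) this integrability is verified directly, so the gap in the statement causes no actual problem downstream.
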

\begin{lemma}
        \label{lemma: prop A.4}
Let $u\in L^2(\Omega; \mathcal{H}_H)$ and $\{u_n,\, n\geq 1\}\subset
\mathrm{Dom} (\delta)$ such that
\[
\lim_{n\to\infty}\EE\left[||u_n-u||^2_{\mathcal{H}_H}\right]= 0.
\]
Suppose that there exists a random variable $G\in L^2(\Omega)$ such
that, for all $F\in \mathcal{S}$,
        $$\EE\left[\delta(u_n)F\right]\to \Es{GF}.$$
        Then $u\in \mathrm{Dom}(\delta)$ and $\delta(u)=G$.
\end{lemma}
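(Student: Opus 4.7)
The plan is to verify the two defining properties of $\mathrm{Dom}(\delta)$ for the candidate element $u$, and then use the duality relation to identify $\delta(u)$ with $G$. First, I would fix an arbitrary $F\in\mathcal{S}$ and apply the duality relation to each $u_n\in\mathrm{Dom}(\delta)$, which gives
\[
\EE\left[\langle DF, u_n\rangle_H\right] = \EE\left[F\delta(u_n)\right].
\]

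Next, I would pass to the limit $n\to\infty$ on both sides. The right-hand side converges to $\EE[FG]$ by hypothesis. For the left-hand side, since $F\in\mathcal{S}\subset\mathbb{D}^{1,2}$ we have $DF\in L^2(\Omega;\mathcal{H}_H)$, so Cauchy--Schwarz yields
\[
\left|\EE\left[\langle DF, u_n - u\rangle_H\right]\right|\leq \EE\left[\|DF\|_{\mathcal{H}_H}^2\right]^{1/2}\,\EE\left[\|u_n-u\|_{\mathcal{H}_H}^2\right]^{1/2}\longrightarrow 0.
\]
Combining, I obtain $\EE[\langle DF, u\rangle_H] = \EE[FG]$ for every $F\in\mathcal{S}$. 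A further application of Cauchy--Schwarz to the expression $\EE[FG]$ then gives the bound $|\EE[\langle DF, u\rangle_H]|\leq \EE[G^2]^{1/2}\EE[F^2]^{1/2}$ on $\mathcal{S}$.

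To transfer both the identity and the estimate from $\mathcal{S}$ to an arbitrary $F\in\mathbb{D}^{1,2}$, I would pick an approximating sequence $\{F_k\}\subset\mathcal{S}$ with $F_k\to F$ in the $\|\cdot\|_{\mathbb{D}^{1,2}}$-norm; this simultaneously forces $F_k\to F$ in $L^2(\Omega)$ and $DF_k\to DF$ in $L^2(\Omega;\mathcal{H}_H)$, so both sides of the identity are continuous along the approximation. Consequently $u$ satisfies the defining inequality of $\mathrm{Dom}(\delta)$ with constant $\EE[G^2]^{1/2}$, so $u\in\mathrm{Dom}(\delta)$.

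Finally, applying the duality relation to $u$ itself yields $\EE[F\delta(u)]=\EE[\langle DF, u\rangle_H]=\EE[FG]$ for every $F\in\mathbb{D}^{1,2}$, and since $\mathbb{D}^{1,2}$ is dense in $L^2(\Omega)$ this forces $\delta(u)=G$ in $L^2(\Omega)$. The argument is essentially a closability/weak-limit statement for the divergence operator and presents no serious obstacle; the only point requiring mild care is the density step used to upgrade the duality identity from the smooth class $\mathcal{S}$ to the full space $\mathbb{D}^{1,2}$, but this is standard given that $\mathcal{S}$ is dense in $\mathbb{D}^{1,2}$ by construction.
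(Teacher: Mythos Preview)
Your proof is correct and is precisely the standard closability argument for the divergence operator. The paper does not give its own proof of this lemma; it simply cites it as Proposition~1.3.6 in Nualart's book, and your argument is essentially the one found there.
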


\medskip

We now define the contraction $\otimes_1$. For $h\in
\mathcal{H}_H^{\otimes n}$ and any element $e_1\otimes \cdots
\otimes e_n$ of the canonical basis of $\mathcal{H}_H^{\otimes n}$,
we define
$$(e_1\otimes \cdots \otimes e_n)\otimes_1 h:=(e_1 \otimes \cdots \otimes e_{n-1}) \langle e_n, h\rangle_H,$$
and we extend it to a generic $f\in \mathcal{H}_H^{\otimes n}$ by
linearity and density. The following lemma can be found in
\cite[Thm. 4.3.8]{paper greeks}:
\begin{lemma}
   \label{lemma: paper greeks}
Let $F\in L^2(\Omega)$ with Wiener chaos expansion $F=\EE[F] +
\sum_{n\geq 1} I_n^H(f_n)$, where $f_n\in \mathcal{H}_H^{\otimes n}$
is symmetric, for all $n\geq 1$. Then $F\in \mathbb{D}^{1,2}$ if and
only if
$$\sum_{n\geq 1} n\, n! \|f_n\|^2_{\mathcal{H}_H^{\otimes n}}<\infty.$$
In this case, for every $h\in \mathcal{H}_H$, we have
$$\langle DF,h\rangle_H=\sum_{n\geq 1} n I_{n-1}^H(f_n\otimes_1 h).$$
\end{lemma}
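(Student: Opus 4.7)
The plan is to reduce the general statement to a computation on the dense subclass of elementary symmetric tensors in $\mathcal{H}_H^{\otimes n}$, invoke the orthogonality of distinct Wiener chaoses, and then close by density. First I would fix an orthonormal basis $\{e_k\}_{k\geq 1}$ of $\mathcal{H}_H$ and take an elementary element of the form $\varphi = e_{j_1}^{\otimes k_1}\hat\otimes\cdots\hat\otimes e_{j_m}^{\otimes k_m}$ as in \eqref{eq: elementary function}. By \eqref{eq: multiple wiener integral}, $I_n^H(\varphi) = P_{k_1}(W^H(e_{j_1}))\cdots P_{k_m}(W^H(e_{j_m}))$ is a polynomial in the independent standard Gaussian variables $W^H(e_{j_i})$, hence a smooth functional belonging to $\mathcal{S}$. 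Applying the chain rule together with the Hermite identity $P_k'=\sqrt{k}\,P_{k-1}$ (valid for the normalized Hermite polynomials) and then projecting against an arbitrary basis vector yields, after elementary bookkeeping,
\[
\langle D I_n^H(\varphi), h\rangle_H \;=\; n\, I_{n-1}^H(\varphi\otimes_1 h), \qquad h\in\mathcal{H}_H,
\]
which is the identity we need.

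Next I would extend this identity to arbitrary symmetric $f_n\in \mathcal{H}_H^{\otimes n}$. Using the $L^2$-isometry of $I_n^H$ and the orthogonality of distinct chaoses, a direct computation on finite linear combinations $\varphi$ of elementary tensors gives
\[
\EE\bigl[\|DI_n^H(\varphi)\|^2_{\mathcal{H}_H}\bigr] \;=\; n\cdot n!\,\|\varphi\|^2_{\mathcal{H}_H^{\otimes n}}.
\]
Since elementary tensors are dense in $\mathcal{H}_H^{\otimes n}$, any symmetric $f_n$ can be approximated by such $\varphi^{(k)}\to f_n$; the above isometry shows that $\{I_n^H(\varphi^{(k)})\}_k$ is Cauchy in $\mathbb{D}^{1,2}$, and the closability of $D$ transfers both the membership and the derivative formula to the limit, giving $\langle DI_n^H(f_n),h\rangle_H = n\,I_{n-1}^H(f_n\otimes_1 h)$.

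Finally, for the full expansion $F = \EE[F] + \sum_{n\geq 1} I_n^H(f_n)$, orthogonality of the chaoses in $L^2(\Omega)$ propagates to their Malliavin derivatives in $L^2(\Omega;\mathcal{H}_H)$, so the partial sums $F_N = \EE[F] + \sum_{n=1}^N I_n^H(f_n)$ satisfy
\[
\EE\bigl[\|DF_N\|^2_{\mathcal{H}_H}\bigr] \;=\; \sum_{n=1}^N n\cdot n!\,\|f_n\|^2_{\mathcal{H}_H^{\otimes n}}.
\]
If the series $\sum_n n\cdot n!\,\|f_n\|^2$ converges, then $\{F_N\}$ is Cauchy in $\mathbb{D}^{1,2}$, its $L^2$-limit is $F$, and closability of $D$ yields $F\in\mathbb{D}^{1,2}$ together with the stated formula. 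Conversely, if $F\in\mathbb{D}^{1,2}$, the lower bound $\EE[\|DF\|^2_{\mathcal{H}_H}]\geq \EE[\|DF_N\|^2_{\mathcal{H}_H}]$, obtained by projecting $DF$ orthogonally onto the sum of the first $N-1$ chaoses in $L^2(\Omega;\mathcal{H}_H)$, forces the series to converge.

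The main obstacle is ensuring that the contraction $\otimes_1$, defined via the canonical basis and extended by linearity and density, is well defined and basis-independent on all of $\mathcal{H}_H^{\otimes n}\times \mathcal{H}_H$. This requires some care in the regime $H>\tfrac12$ where $\mathcal{H}_H$ may contain distributions rather than functions, but the abstract Hilbert-space structure together with the isometry computed above makes the extension routine; the rest of the argument is then a standard density/closability exercise analogous to \cite[Prop.~1.2.2]{nualart book}.
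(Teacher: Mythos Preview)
The paper does not actually prove this lemma: it is stated with the attribution ``The following lemma can be found in \cite[Thm.~4.3.8]{paper greeks}'' and no argument is given. So there is no paper proof to compare against.

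Your proposal is essentially the standard argument (the one underlying \cite[Prop.~1.2.2]{nualart book}, which you yourself cite): compute $DI_n^H(\varphi)$ on elementary symmetric tensors via the Hermite recursion, obtain the isometry $\EE\bigl[\|DI_n^H(\varphi)\|_{\mathcal{H}_H}^2\bigr]=n\cdot n!\,\|\varphi\|^2_{\mathcal{H}_H^{\otimes n}}$, extend by density and closability of $D$, and then sum over $n$ using the orthogonality of distinct chaoses. This is correct, and the concern you flag about $\otimes_1$ when $H>\tfrac12$ is not a real obstacle, since the contraction is defined purely at the level of the abstract Hilbert space $\mathcal{H}_H^{\otimes n}$. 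One small point: in the converse direction your phrasing ``$\EE[\|DF\|^2_{\mathcal{H}_H}]\geq \EE[\|DF_N\|^2_{\mathcal{H}_H}]$ by projecting $DF$'' implicitly uses that the chaos projection commutes with $D$ in the sense $J_{n-1}^H(DF)=D(J_n^H F)$; this is true and standard, but it is the substantive step that makes the inequality work, so it would be worth saying explicitly.
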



\subsection{Spectral representation of $W^H$}
\label{sec:spectral}

This section is devoted to prove that any multiple Wiener integral
with respect to the noise $W^H$ admits a representation as a
multiple Wiener integral with respect to a complex-valued Gaussian
measure. For this, we will provide a suitable spectral
representation of $W^H$ in terms of such a complex-valued Gaussian
measure. We point out that the results in the present section will only
be used in Section \ref{subsec: limit identification LMC} in order to identify
the underlying limit law.

Recall that $\{W^H(\varphi),\, \varphi\in\mathcal{H}_H\}$ denotes
the underlying isonormal Gaussian process associated to our noise
$W^H$. Using an approximation argument, one proves that, for any
$t>0$ and $x\in \R$, $1_{[0,t]\times [0,x]}\in \H_H$. Then, we can
define the random field (making an abuse of notation)
\begin{equation}
W^H(t,x):=W^H \left(1_{[0,t]\times [0,x]}\right),\quad (t,x)\in
\R_+\times \R, \label{eq:5}
\end{equation}
which is Gaussian, centered and satisfies, for all $s,t>0$ and
$x,y\in \R$:
\[
\EE\left[W^H(t,x)W^H(s,y)\right]= \frac 12 (s\land
t)\left(|x|^{2H}+|y|^{2H} - |x-y|^{2H}\right).
\]
The latter equality is a consequence of the representation in law of the fractional Brownian
motion as a Wiener type integral with respect to a complex Brownian motion (see, for instance,
\cite[p. 257]{Pipiras 2000}).

Let $\tilde{W}:\mathcal{B}_0(\R_+\times \R) \to \mathbb{C}$ be a
complex-valued Gaussian measure which can be written as
$\tilde{W}=\tilde{W}_1 + i\tilde{W}_2$, where $\tilde{W}_1$ and
$\tilde{W}_2$ are independent real-valued centered Gaussian measures
such that, for any $A,B \in \mathcal{B}(\R_+\times \R)$,
\[
\EE\big[\tilde{W}_j(A) \tilde{W}_j(B) \big] = \frac{|A\cap B|}{2},
\quad j=1,2,
\]
where $|A\cap B|$ is the Lebesgue measure of $A\cap B$. In
particular, $\EE\big[|\tilde{W}(A)|^2\big]=|A|$, for all $A\in
\mathcal{B}_0(\R_+\times \R)$. Note that $\tilde{W}_1$ and
$\tilde{W}_2$ are essentially {\it white noises} in the sense of
\cite[Page 6, Example 3.13]{minicourse}. One can define the integral
of any deterministic function $f\in L^2(\R_+\times \R; \mathbb{C})$
with respect to $\tilde{W}$, as follows:
\[
\int_{\R_+}\int_\R f(t,x) \tilde{W}(dt,dx):= \int_{\R_+}\int_\R
f(t,x) \tilde{W}_1(dt,dx) + i \int_{\R_+}\int_\R f(t,x)
\tilde{W}_2(dt,dx),
\]
and, for $j=1,2$,
\[
\int_{\R_+}\int_\R f(t,x) \tilde{W}_j(dt,dx) := \int_{\R_+}\int_\R
\mbox{Re}[f](t,x) \tilde{W}_j(dt,dx)+ i \int_{\R_+}\int_\R
\mbox{Im}[f](t,x) \tilde{W}_j(dt,dx).
\]
The latter integrals can be interpreted, e.g., as integrals with
respect to a martingale measure (see \cite{Walsh}). It holds that, for any
$f,g\in L^2(\R_+\times \R; \mathbb{C})$,
\[
\EE \left[\int_{\R_+}\int_\R f(t,x) \tilde{W}(dt,dx)
\overline{\int_{\R_+}\int_\R g(t,x) \tilde{W}(dt,dx)}\right] =
\int_{\R_+}\int_\R f(t,x) \overline{g(t,x)} \, dx dt.
\]
This yields, for all $f\in L^2(\R_+\times \R; \mathbb{C})$, the
isometry property
\[
\EE\left[\left|\int_{\R_+}\int_\R f(t,x)
\tilde{W}(dt,dx)\right|^2\right]= \int_{\R_+}\int_\R |f(t,x)|^2 \,
dx dt.
\]

We have the following result, whose proof follows immediately.

\begin{proposition}
\label{prop: representation W^H 2D}
Set, for any $(t,x)\in
\R_+\times \R$,
        \begin{equation}
        \label{eq: spectral representation W^H}
            \tilde{W}^H(t,x):=\sqrt{c_H} \int_{0}^{t}\int_{\R}
            \F[1_{[0,x]}](\xi)|\xi|^{\frac{1}{2}-H}\tilde{W}(ds,d\xi).
        \end{equation}
        Then, $\tilde{W}^H$ is a Gaussian process which has the same distribution
        as the random field $W^H$ defined in \eqref{eq:5}.
\end{proposition}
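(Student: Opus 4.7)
The plan is to reduce the claim to an equality of covariance functions. Since both $W^H$ and $\tilde{W}^H$ are centered Gaussian, matching the second-order structure suffices, and my strategy is to compute the Hermitian covariance of $\tilde{W}^H$ using the isometry for the complex Gaussian measure $\tilde{W}$, and then identify the resulting spectral integral with the covariance of fractional Brownian motion through a classical Parseval-type formula.

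First, I would apply the isometry $\EE\bigl[\int f\, d\tilde{W}\,\overline{\int g\, d\tilde{W}}\bigr] = \int f\bar{g}\,dx\,dt$ recalled just above the proposition, together with the white-in-time structure of $\tilde{W}$, which decouples the $ds$ and $d\xi$ integrations. This yields
\begin{equation*}
\EE\bigl[\tilde{W}^H(t,x)\,\overline{\tilde{W}^H(s,y)}\bigr] = c_H (s\wedge t)\int_\R \F[1_{[0,x]}](\xi)\,\overline{\F[1_{[0,y]}](\xi)}\,|\xi|^{1-2H}\,d\xi.
\end{equation*}
Next, I would invoke the classical spectral identity
\begin{equation*}
c_H \int_\R \F[1_{[0,x]}](\xi)\,\overline{\F[1_{[0,y]}](\xi)}\,|\xi|^{1-2H}\,d\xi = \tfrac{1}{2}\bigl(|x|^{2H} + |y|^{2H} - |x-y|^{2H}\bigr),
\end{equation*}
which is precisely the Fourier realisation of the covariance of standard fractional Brownian motion of Hurst index $H$, in the spirit of the Mandelbrot--Van Ness spectral representation used in \cite{Pipiras 2000}. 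Combining the two displays gives exactly the covariance of $W^H(t,x)$ and $W^H(s,y)$ recalled in the paper. In addition, the pseudo-covariance $\EE[\tilde{W}^H(t,x)\tilde{W}^H(s,y)]$ vanishes, because the independence of $\tilde{W}_1$ and $\tilde{W}_2$ forces $\EE[\tilde{W}(A)\tilde{W}(B)] = 0$, so the full second-order structure of $\tilde{W}^H$ is consistent with that of the real-valued process $W^H$.

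The step that deserves most care, and which I expect to be the main technical obstacle, is the Parseval-type identity above: the weight $|\xi|^{1-2H}$ is singular at infinity when $H<\tfrac12$ and at the origin when $H>\tfrac12$, so the Fourier transforms of $1_{[0,x]}$ and $1_{[0,y]}$ have to be paired against it in the appropriate weighted $L^2$ sense. The cleanest way to handle this is to approximate the indicators by Schwartz functions and pass to the limit, or equivalently to compute directly with the explicit formula $\F[1_{[0,x]}](\xi) = (1 - e^{-i\xi x})/(i\xi)$ and evaluate the resulting elementary integral. Once the identity is in hand, equality in law is immediate from the general fact that a centered Gaussian process is determined by its covariance and pseudo-covariance.
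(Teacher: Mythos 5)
Your overall route---centered Gaussianity plus matching of the second-order structure, with the covariance computed through the isometry of $\tilde{W}$ and the classical weighted Parseval identity giving the fractional Brownian covariance---is exactly the argument the paper has in mind (it offers no details, appealing to the Pipiras--Taqqu spectral representation), and your covariance computation is correct: the white-in-time structure produces the factor $s\wedge t$, and with the constant $c_H$ of \eqref{eq:103} the spectral identity indeed yields $\tfrac12(s\wedge t)\bigl(|x|^{2H}+|y|^{2H}-|x-y|^{2H}\bigr)$.

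The gap is in your concluding step about the pseudo-covariance. If, as you compute, $\EE\bigl[\tilde{W}^H(t,x)\tilde{W}^H(s,y)\bigr]=0$ while the Hermitian covariance is the fBm one, then $\tilde{W}^H(t,x)$ is a genuinely complex, circularly symmetric Gaussian variable: its real and imaginary parts each carry half the variance, and such a field cannot be equal in law to the real-valued field $W^H$, for which the pseudo-covariance coincides with the covariance and is certainly not zero. So a vanishing pseudo-covariance is not ``consistent'' with $W^H$; it is an obstruction. What your argument is missing is the almost sure realness of $\tilde{W}^H(t,x)$, and this holds only if the complex measure is taken Hermitian, i.e. $\tilde{W}(ds,-d\xi)=\overline{\tilde{W}(ds,d\xi)}$ (equivalently, $\tilde{W}_1$ even and $\tilde{W}_2$ odd in the spatial variable), which is the convention behind the representation in \cite{Pipiras 2000} that the paper invokes. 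Under that convention the kernel $f_{t,x}(s,\xi)=1_{[0,t]}(s)\,\F[1_{[0,x]}](\xi)\,|\xi|^{\frac12-H}$ satisfies $f_{t,x}(s,-\xi)=\overline{f_{t,x}(s,\xi)}$ (since $1_{[0,x]}$ is real), the stochastic integral is a.s. real, and its pseudo-covariance equals its covariance rather than zero. Once realness is secured, your Gaussianity-plus-covariance argument (where the Parseval step only requires $\F[1_{[0,x]}](\xi)=(1-e^{-i\xi x})/(i\xi)$ and the integrability of the resulting $|\xi|^{-1-2H}$-type integrand) finishes the proof exactly as intended; if instead one keeps the circular, non-Hermitian $\tilde{W}$ literally as described, one would have to pass to the real part and renormalize by $\sqrt{2}$, since otherwise the asserted equality in law fails.
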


At this point, we aim to extend the random field $\tilde{W}^H$
defined in \eqref{eq: spectral representation W^H} to an isonormal
Gaussian process in $\H_H$. We need the following corollary of
\cite[Thm. 4.3]{Basse}:

\begin{proposition}
\label{cor: density step functions in H^H}
The space of finite
linear combinations of functions of the form
\[
f(r,z)= 1_{(s,t]\times (x,y]}(r,z),
\]
with $0\leq s<t$ and $x<y$, is dense in the Hilbert space
$\mathcal{H}_H$.
\end{proposition}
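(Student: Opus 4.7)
The plan is to exploit the tensor-product structure of the inner product on $\mathcal{H}_H$. Writing
\[
\langle f,g\rangle_{\mathrm{sp}}:=\int_\R \F f(\xi)\,\overline{\F g(\xi)}\,\mu_H(d\xi)
\]
for $f,g\in\mathcal{C}^\infty_0(\R)$, and letting $\mathcal{H}_H^{\mathrm{sp}}$ denote the corresponding completion, the covariance $\langle\cdot,\cdot\rangle_H$ factorizes on test functions as $\langle\varphi,\psi\rangle_H=\int_0^\infty\langle\varphi(r,\cdot),\psi(r,\cdot)\rangle_{\mathrm{sp}}\,dr$. In particular, $\mathcal{H}_H$ is isometrically isomorphic to the Bochner space $L^2(\R_+;\mathcal{H}_H^{\mathrm{sp}})$, with $1_{(s,t]\times(x,y]}$ corresponding to $1_{(s,t]}(r)\otimes 1_{(x,y]}(z)$. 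As a starting ingredient I would invoke \cite[Thm.~4.3]{Basse}, which gives that the linear span of $\{1_{(x,y]}:x<y\}$ is dense in $\mathcal{H}_H^{\mathrm{sp}}$ for every $H\in(0,1)$.

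Next, since $\mathcal{C}^\infty_0(\R_+\times\R)$ is dense in $\mathcal{H}_H$ by construction, it suffices to approximate an arbitrary $\varphi\in\mathcal{C}^\infty_0(\R_+\times\R)$. I would view $r\mapsto\varphi(r,\cdot)$ as a compactly supported continuous map from $\R_+$ into $\mathcal{H}_H^{\mathrm{sp}}$; continuity follows from the Fourier-side expression of $\|\cdot\|_{\mathrm{sp}}$, the smoothness of $r\mapsto\F\varphi(r,\cdot)(\xi)$, and dominated convergence applied with a Schwartz-type bound on the Fourier transform. On a fine partition $0=t_0<\dots<t_N=T$ covering the time support of $\varphi$, uniform continuity then yields a close approximation of $\varphi$ in $\mathcal{H}_H$ by the step function $\sum_{i=1}^N 1_{(t_{i-1},t_i]}(r)\,\varphi(\tau_i,z)$ for any sample points $\tau_i\in(t_{i-1},t_i]$.

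Finally, each spatial slice $\varphi(\tau_i,\cdot)\in\mathcal{H}_H^{\mathrm{sp}}$ can, by the first paragraph's density statement, be approximated in $\|\cdot\|_{\mathrm{sp}}$ by a finite combination $\sum_j c_{i,j}\,1_{(x_{i,j},y_{i,j}]}$. Substituting these into the step function above produces a finite linear combination of rectangular indicators $1_{(t_{i-1},t_i]\times(x_{i,j},y_{i,j}]}$ approximating $\varphi$ in $\mathcal{H}_H$, which is exactly the required form.

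The main obstacle is the continuity of $r\mapsto\varphi(r,\cdot)$ in $\mathcal{H}_H^{\mathrm{sp}}$ when $H>\tfrac12$: since $\mathcal{H}_H^{\mathrm{sp}}$ then contains genuine distributions and the norm is defined only through the Fourier transform, one cannot argue pointwise. The continuity must instead be established by bounding $\|\varphi(r,\cdot)-\varphi(r',\cdot)\|_{\mathrm{sp}}$ via a uniform-in-$r$ estimate on $|\F\varphi(r,\cdot)(\xi)|$ that is integrable against $|\xi|^{1-2H}\,d\xi$. This is routine for a fixed smooth compactly supported $\varphi$ but deserves explicit verification, as it is the only point where the regularity regime of $H$ intervenes.
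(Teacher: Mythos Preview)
Your argument is correct, but the paper takes a much shorter route. Rather than decomposing $\mathcal{H}_H$ as $L^2(\R_+;\mathcal{H}_H^{\mathrm{sp}})$ and building the approximation by hand (time discretization via continuity of $r\mapsto\varphi(r,\cdot)$, then spatial approximation of each slice), the paper simply observes that \cite[Thm.~4.3]{Basse} already establishes that any \emph{predictable process} in $L^2(\Omega;\mathcal{H}_H)$ is approximable by finite linear combinations of $1_G(\omega)1_{(s,t]}(r)1_{(x,y]}(z)$, and then notes that if one starts from a deterministic element the approximating sequence produced in that proof is itself deterministic, so the $L^2(\Omega;\mathcal{H}_H)$-norm reduces to the $\mathcal{H}_H$-norm. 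That is the entire proof.

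Two remarks on the comparison. First, you are citing \cite[Thm.~4.3]{Basse} for the one-parameter spatial density of $\{1_{(x,y]}\}$ in $\mathcal{H}_H^{\mathrm{sp}}$; in the paper that reference is invoked for the two-parameter (space--time, stochastic) result, so your attribution of the spatial density to that theorem is a bit loose --- the fact is true, but you may prefer to cite a direct source such as \cite{Pipiras 2000} or \cite{maria space H^H}. Second, your approach has the merit of being self-contained and of making the tensor structure explicit, at the cost of the continuity verification you flag for $H>\tfrac12$; the paper's approach avoids that issue entirely by outsourcing all the work to the cited theorem.
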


\begin{proof}
The result is a direct consequence of \cite[Thm. 4.3]{Basse}.
Indeed, in the latter paper it is proved that any predictable
process $\{X(t,x),\, (t,x)\in \R_+\times \R\}$ belonging to
$L^2(\Omega;\H_H)$ can be approximated by finite linear combinations
of processes of the form $(r,z,\omega)\mapsto 1_G(\omega)
1_{(s,t]}(r)1_{(x,y]}(z)$, for some $G\in \F$. To prove our result,
it suffices to observe that, if we choose a deterministic element
$\varphi$ in their proof, also its approximating sequence
$\varphi_n$ is deterministic, and the norm in the space
$L^2(\Omega;\H_H)$ coincides with the norm in $\H_H$ for
deterministic elements.
\end{proof}

Let us now define, for any $(t,x)\in \R_+\times \R$,
\[
\tilde{W}^H\left(1_{[0,t]\times [0,x]}\right):=\tilde{W}^H(t,x)
\]
(again making an abuse of notation). This definition can be extended
by linearity to any simple function on $\R_+\times \R$. Then, thanks
to Proposition \ref{cor: density step functions in H^H} and using an
approximation argument, one constructs an isonormal Gaussian process
$\{\tilde{W}^H(\varphi),\, \varphi\in \H_H\}$ which has exactly the
same law as $\{W^H(\varphi),\, \varphi\in \H_H\}$.

For the remainder of the paper, we will assume, without any loss of
generality, that our Gaussian setting is the one determined by the
isonormal Gaussian process $\tilde{W}^H=\{\tilde{W}^H(\varphi),\,
\varphi\in \H_H\}$.
For the sake of simplicity, we
will use again the notation $W^H$ instead of $\tilde{W}^H$.
So, the main implications of this setting are
that, first, we have the representation
\begin{equation}
W^H\left(1_{[0,t]\times [0,x]}\right) = \sqrt{c_H}
\int_{0}^{t}\int_{\R}
            \F[1_{[0,x]}](\xi)|\xi|^{\frac{1}{2}-H}\tilde{W}(ds,d\xi)
\label{eq:8}
\end{equation}
and, secondly, the whole family of processes $\{W^H,\, H\in (0,1)\}$
are defined in a single probability space, which is the one where
the Gaussian measure $\tilde{W}$ is defined. This last fact
will be crucial in Section \ref{subsec: limit identification LMC}.

The main result of the section is the following:

\begin{theorem}
\label{teo: representation multiple integral}
Let $n\geq 1$, $f\in \mathcal{H}_H^{\otimes_n}$ and $I^H_n(f)$ be
the multiple Wiener integral of $f$ with respect to  $W^H$. Let
$\hat{f}$ be the function defined by
\[
\hat{f}(t_1,x_1,t_2,x_2,\dots,t_n,x_n) = (c_H)^{\frac{n}{2}}\F
[f(t_1,\cdot,t_2,\cdot,\dots,t_n,\cdot)](x_1,\dots,x_n) \,
|x_1|^{\frac{1}{2}-H}\cdots |x_n|^{\frac{1}{2}-H},
\]
where we recall that the
constant $c_H$ is given in \eqref{eq:103}
Then, it holds that
        \begin{equation}
        \label{eq: representation mult Wiener integral}
        I_n^H(f)=\tilde{I}_n(\hat{f}), \; \mathbb{P}\text{-a.s.},
        \end{equation}
where $\tilde{I}_n$ is the $n$-th order Wiener integral with respect
to the complex Gaussian measure $\tilde{W}$.
\end{theorem}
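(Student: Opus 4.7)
\medskip

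\noindent\textbf{Proof plan.} The natural strategy is to reduce the identity to the building blocks used to construct $I_n^H$, namely the elementary elements \eqref{eq: elementary function}, and to exploit the spectral representation \eqref{eq:8} already in hand. First I would settle the case $n=1$. For an indicator $\varphi=1_{[0,t]\times[0,x]}$, the identity $W^H(\varphi)=\tilde I_1(\hat\varphi)$ is literally \eqref{eq:8}. By linearity this extends to finite linear combinations of such indicators, and by the isometries of $W^H$ on $\H_H$ and of $\tilde I_1$ on $L^2(\R_+\times\R;\mathbb{C})$, together with the density statement of Proposition \ref{cor: density step functions in H^H} and the fact that $f\mapsto \hat f$ is (by definition of the $\H_H$ inner product) an isometric embedding of $\H_H$ into $L^2(\R_+\times\R;\mathbb{C})$, the identity extends to every $\varphi\in\H_H$.

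Next, for general $n$, I would reduce to elementary elements of the form
\[
\varphi = e_{j_1}^{\otimes k_1}\hat\otimes\cdots\hat\otimes e_{j_m}^{\otimes k_m},\qquad k_1+\cdots+k_m=n,
\]
with $\{e_j\}$ an orthonormal basis of $\H_H$ and $j_1,\ldots,j_m$ distinct. By definition \eqref{eq: multiple wiener integral},
\[
I_n^H(\varphi)= P_{k_1}\bigl(W^H(e_{j_1})\bigr)\cdots P_{k_m}\bigl(W^H(e_{j_m})\bigr).
\]
The $n=1$ case yields $W^H(e_{j_\ell})=\tilde I_1(\hat e_{j_\ell})$. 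Now observe that $\{\hat e_{j_\ell}\}_{\ell=1}^m$ are orthonormal in the (real) Hilbert space of Hermitian-symmetric elements of $L^2(\R_+\times\R;\mathbb{C})$; hence $\tilde I_1(\hat e_{j_1}),\dots,\tilde I_1(\hat e_{j_m})$ is a collection of independent standard Gaussian random variables, and the classical product formula for Hermite polynomials in terms of multiple Wiener integrals with respect to a real isonormal process applied to this family gives
\[
P_{k_1}\bigl(\tilde I_1(\hat e_{j_1})\bigr)\cdots P_{k_m}\bigl(\tilde I_1(\hat e_{j_m})\bigr)
= \tilde I_n\bigl(\hat e_{j_1}^{\otimes k_1}\hat\otimes\cdots\hat\otimes \hat e_{j_m}^{\otimes k_m}\bigr).
\]
Since Fourier transformation and multiplication by $|x_1|^{1/2-H}\cdots|x_n|^{1/2-H}$ both commute with symmetric tensor products, the right-hand side is exactly $\tilde I_n(\hat\varphi)$, proving \eqref{eq: representation mult Wiener integral} for elementary elements.

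Finally, I would extend the identity from elementary elements to all of $\H_H^{\otimes n}$ by linearity and by a standard $L^2$ density argument: the mapping $f\mapsto \hat f$ is an isometry from $\H_H^{\otimes n}$ into the space of Hermitian-symmetric functions in $L^2((\R_+\times\R)^n;\mathbb{C})$ (this is exactly the expression for $\|f\|_{\H_H^{\otimes n}}$ recorded just after Lemma \ref{lemma: A.1}), so a Cauchy sequence of elementary elements $\varphi_k\to f$ in $\H_H^{\otimes n}$ yields $\hat\varphi_k\to\hat f$ in $L^2$, and the $L^2(\Omega)$-isometries of both $I_n^H$ and $\tilde I_n$ allow us to pass to the limit on both sides.

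The main obstacle I anticipate is the precise bookkeeping of the complex-valued nature of $\tilde I_n$, in particular verifying that the $\hat e_{j_\ell}$ form an \emph{orthonormal} family suitable for producing a real isonormal Gaussian process and invoking the Hermite product formula in that setting; once this point is carefully set up (relying on the Hermitian symmetry of $\hat e_{j_\ell}$, coming from the fact that $e_{j_\ell}$ is real-valued or more generally lies in $\H_H$), the rest of the proof is a routine three-step linearity/density argument.
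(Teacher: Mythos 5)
Your proposal is correct and follows essentially the same route as the paper's own proof: establish the case $n=1$ from the spectral representation \eqref{eq:8} together with the density of step functions (Proposition \ref{cor: density step functions in H^H}), then treat elementary elements via the Hermite-polynomial definition \eqref{eq: multiple wiener integral} of $I_n^H$, and conclude by linearity and density in $\H_H^{\otimes n}$. Your extra care about the Hermitian symmetry and orthonormality of the $\hat e_{j_\ell}$ before invoking the Hermite/multiple-integral product identity is a point the paper passes over silently, and it is a sound addition rather than a deviation.
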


\begin{proof}
We first check that the result is true for the first-order Wiener
integral $I_1^H$. We aim to prove that, for any $\varphi\in
\mathcal{H}_H$,
\begin{equation}
I_1^H(\varphi)=(c_H)^{\frac{1}{2}} \, \tilde{I}_1\left(\F \varphi(t,\cdot)(x) \,
|x|^{\frac{1}{2}-H}\right),
\label{eq:9}
\end{equation}
 which means that
$$\int_{\R_+}\int_{\R} \varphi(t,x) W^H(dt,dx)=(c_H)^{\frac{1}{2}}
\int_{\R_+}\int_{\R}
\F\varphi(t,\cdot)(x)\,|x|^{\frac{1}{2}-H}\tilde{W}(dt,dx).$$ By
\eqref{eq:8} and the linearity of the Wiener integral, the latter
equality clearly holds in the case where $\varphi(t,x)=
1_{(r,s]\times(y,z]}(t,x)$, for $0\leq r<s$ and $y<z$. Moreover,
owing to Proposition \ref{cor: density step functions in H^H}, it
can also be extended to the whole space $\H_H$, hence proving the
theorem's statement for first-order Wiener integrals.

Let us now prove \eqref{eq: representation mult Wiener integral} for $n>1$. We first
consider the case where $f\in \H_H^{\otimes n}$
is an elementary element of the form \eqref{eq: elementary function}. In this case, we use the definition
of the multiple Wiener integral
(see \eqref{eq: multiple wiener integral}) and the validity of the case $n=1$ (see \eqref{eq:9}), as follows:
\begin{align*}
I^H_n(f)& =   P_{k_1}\left(W^H(e_{j_1})\right)\cdots P_{k_m}\left(W^H(e_{j_m})\right) \\
& =   P_{k_1}\left(\tilde{I}_1(\hat{e}_{j_1})\right) \cdots
P_{k_m}\left(\tilde{I}_1(\hat{e}_{j_m})\right)    \\
& = \tilde{I}_n\left(  \hat{e}_{j_1}^{\otimes_{k_1}} \otimes \cdots
 \otimes \hat{e}_{j_m}^{\otimes_{k_m}} \right) \\
& = \tilde{I}_n (\hat{f}).
\end{align*}
The extension to any element of $\H_H^{\otimes n}$ can be proved by recalling that the set of finite
linear combinations of elementary elements of the form \eqref{eq: elementary function} is dense
in $\H_H^{\otimes n}$.
\end{proof}


\subsection{It\^o and Skorohod stochastic integrals}
\label{subsec: stochastic integral SPDE}

This section is devoted to recall the definition of stochastic
integrals with respect to $W^H$, both in the case $H<\frac12$ and
$H\geq \frac12$, and to prove that the Skorohod integral with
respect to $W^H$ of an adapted process coincides with the
corresponding It\^o integral (see Theorem \ref{teo: equivalence
ito-skorohod} below). This result will allow us to express any
Picard iteration associated to our underlying SDPEs as a finite
sum of multiple Wiener integrals, and this fact will be used in the
proof of Theorem \ref{prop: finite dimensional convergence LMC} in
Section \ref{subsec: limit identification LMC}.

Recall that we have a complete probability space $(\Omega,\F,\P)$ in
which we have our complex-valued Gaussian measure $\tilde{W}$ (see
Section \ref{sec:spectral}). Recall that our isonormal Gaussian
process $W^H=\{W^H(\varphi),\, \varphi\in \H_H\}$ has been defined
in such a way that we may assume that $W^H$ is defined in
$(\Omega,\F,\P)$, for all $H\in (0,1)$. Regarding adaptability, we
consider the natural filtration generated by $\tilde{W}$, which we
denote by $\{\F_t,\,t\geq 0\}$ and can be defined as
$\F_t=\sigma(\tilde{W}(s,x),\, (s,x)\in[0,t]\times \R)$, where
\[
 \tilde{W}(s,x):=\int_{\R_+}\int_\R 1_{[0,s]\times [0,x]}(r,z)\tilde{W}(dr,dz).
\]

\smallskip

Fix a time horizon $T>0$. We denote by $\mathcal{E}$ the space of {\it simple processes} on
$[0,T]\times \R$, that is the space of finite linear combinations of processes of the form
\begin{equation}
            \label{eq: elementary process}
            g(t,x,\omega):=Y(\omega)1_{(r,s]\times(y,z]}(t,x),
\end{equation}
        for some $0\leq r\leq s\leq T$ and $y\leq z$, and for some $\F_r$-measurable random variable $Y$.
The (It\^o) stochastic integral of $g$ with respect to $W^H$ is defined as follows: for any $t\in [0,T]$, set
\begin{equation*}
 \int_{0}^{t}\int_{\R} g(\tau,x) W^H(d\tau,dx)
        := Y\left( W^H(t\wedge s,z)-W^H(t\wedge s,y)-W^H(t\wedge r,z)+W^H(t\wedge r,y) \right).
\end{equation*}
This definition can be extended to all elements of $\mathcal{E}$ by
linearity. Following \cite{Dalang} and \cite{BJQ15}, we endow
$\mathcal{E}$ with the norm
\begin{equation*}
 \|g\|_0:=\left( \EE\left[c_H \int_{0}^{T}\int_{\R} |\F g(t,\cdot)(\xi)|^2|\xi|^{1-2H}d\xi
 dt\right]\right)
  ^\frac12,
\end{equation*}
and we define $\mathcal{P}_0^T$ as the completion of $\mathcal{E}$ with respect to the norm $\|\cdot\|_0$.
It turns out that $\mathcal{P}_0^T$ is the space of predictable processes $g$ for which
$\|g\|_0<\infty$. The stochastic integral can be extended to the whole space $\mathcal{P}_0^T$.

The following result is a particular case of \cite[Prop.
2.9]{Quer-Dalang}:

    \begin{theorem}
        \label{teo: Integrability condition H>1/2}
Suppose that $H\in [\frac{1}{2},1)$. Let $\Gamma:[0,T]\times \R\to
\R$ be such that, for all $t\in(0,T]$, the function
$\Gamma(t,\cdot)$ defines non-negative distribution with rapid
decrease and
$$\int_{0}^{T}\int_{\R} |\F \Gamma(t,\cdot)(\xi)|^2 |\xi|^{1-2H}d\xi dt <\infty.$$
Moreover, we assume that, for all $t\in [0,T]$, $\Gamma(t,dx):=\Gamma(t,x)dx$ defines a
non-negative measure on $\R$ such that
$$\sup_{t\in[0,T]} \Gamma(t, \R)<\infty.$$
Let $Z=\{Z(t,x),\, (t,x)\in [0,T]\times \R\}$ be a predictable
stochastic process satisfying
$$\sup_{(t,x)\in[0,T]\times \R} \EE\left[|Z(t,x)|^2\right] <\infty.$$
Then, the process $\{S(t,x):=Z(t,x)\Gamma(t,x), \, (t,x)\in
[0,T]\times \R \}$ belongs to $\mathcal{P}_0^T$. Furthermore, if $Z$
satisfies, for some $p\geq 2$, that
        $$\sup_{(t,x)\in[0,T]\times \R} \EE\left[|Z(t,x)|^p\right]<\infty,$$
then we have the following Burkholder-Davis-Gundy's inequality:
\begin{align}
\label{eq: Burkholder H>1/2}
& \EE \left[\left|\int_0^T\int_\R S(s,x) W^H(ds,dx)\right|^p\right] \\
& \qquad \quad \leq z_p (\nu_{T,H})^{\frac{p}{2}-1} \int_{0}^{T}
\sup_{x \in \R} \EE\left[|Z(s,x)|^p\right]  \int_{\R} c_H\,
|\F\Gamma(s,\cdot)(\xi)|^2|\xi|^{1-2H}d\xi ds,  \nonumber
\end{align}
where the constant $z_p$ is the one in the classical
Burkholder-Davis-Gundy inequality for continuous martingales, and
$\nu_{T,H}$ is given by
$$\nu_{T,H}:=c_H\int_{0}^{T} \int_{\R} |\F \Gamma(s,\cdot)(\xi)|^2 |\xi|^{1-2H}d\xi ds.$$
\end{theorem}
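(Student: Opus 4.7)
The plan is to exploit the fact that, for $H \geq \frac{1}{2}$, the Hilbert-space norm on $\mathcal{H}_H$ admits a spatial representation, and to combine this with the classical Burkholder--Davis--Gundy inequality applied to the stochastic integral viewed as a temporal martingale. Throughout, set
\[
\phi_s := c_H \int_\R |\F S(s,\cdot)(\xi)|^2 |\xi|^{1-2H} d\xi,\qquad \psi(s) := c_H \int_\R |\F \Gamma(s,\cdot)(\xi)|^2 |\xi|^{1-2H} d\xi,
\]
so that $\|S\|_0^2 = \EE[\int_0^T \phi_s ds]$ and $\nu_{T,H} = \int_0^T \psi(s) ds$. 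For $H > \frac12$, Parseval together with the Fourier identity $\F[|\cdot|^{2H-2}](\xi) = \frac{c_H}{\alpha_H}|\xi|^{1-2H}$ (where $\alpha_H = H(2H-1)$) yields, for each $s$,
\[
\phi_s = \alpha_H \int_{\R^2} Z(s,x) Z(s,y)\, \Gamma(s,x) \Gamma(s,y) |x-y|^{2H-2}\, dx\, dy,
\]
and the analogous identity reduces to Plancherel's theorem when $H = \frac12$. The crucial point, exploiting the non-negativity of $\Gamma(s,\cdot)$, is to define the symmetric positive measure $\nu_s(dx,dy) := \alpha_H \Gamma(s,x)\Gamma(s,y)|x-y|^{2H-2} dx\, dy$ on $\R^2$, whose total mass equals $\psi(s)$ by the same Fourier identity.

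To prove that $S \in \mathcal{P}_0^T$, I would combine the symmetry of $\nu_s$ with the pointwise bound $|Z(s,x)Z(s,y)| \leq \tfrac{1}{2}(|Z(s,x)|^2 + |Z(s,y)|^2)$ to obtain $\phi_s \leq \int_\R |Z(s,x)|^2 \nu_s^1(dx)$, where $\nu_s^1$ is the first marginal, also of total mass $\psi(s)$. Taking expectation and applying Fubini's theorem yields $\|S\|_0^2 \leq \sup_{s,x} \EE[|Z(s,x)|^2] \cdot \nu_{T,H} < \infty$ by the two boundedness hypotheses. Predictability of $S$ is inherited from $Z$, so $S \in \mathcal{P}_0^T$ follows.

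For the BDG estimate, note that $M_t := \int_0^t \int_\R S(s,x) W^H(ds,dx)$ defines a continuous $\{\F_t\}$-martingale with quadratic variation $\langle M\rangle_T = \int_0^T \phi_s ds$ (this is the content of the It\^o isometry extended from simple processes in $\mathcal{E}$). The classical Burkholder inequality gives $\EE|M_T|^p \leq z_p \EE[\langle M\rangle_T^{p/2}]$. To bound the right-hand side, I would apply Jensen's inequality twice. First, using the probability measure $\psi(s)ds/\nu_{T,H}$ on $[0,T]$,
\[
\Big(\int_0^T \phi_s\, ds\Big)^{p/2} \leq \nu_{T,H}^{p/2 - 1} \int_0^T \Big(\tfrac{\phi_s}{\psi(s)}\Big)^{p/2} \psi(s)\, ds.
\]
Second, using the probability measure $\nu_s^1/\psi(s)$ on $\R$ together with the Cauchy--Schwarz bound above, $\tfrac{\phi_s}{\psi(s)} \leq \int_\R |Z(s,x)|^2 \tfrac{\nu_s^1(dx)}{\psi(s)}$, so Jensen gives $(\phi_s/\psi(s))^{p/2} \leq \int_\R |Z(s,x)|^p \tfrac{\nu_s^1(dx)}{\psi(s)}$. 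Taking expectation, bringing it inside by Fubini, and dominating $\EE|Z(s,x)|^p$ by $\sup_x \EE|Z(s,x)|^p$ (which integrates to $\psi(s)$ against $\nu_s^1/\psi(s)$) yields exactly \eqref{eq: Burkholder H>1/2}.

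The main obstacle, to my mind, is justifying the spatial representation of $\phi_s$ and the measure-theoretic manipulation of $\nu_s$ when $\Gamma(s,\cdot)$ is only assumed to be a rapidly decreasing non-negative distribution, since $|x-y|^{2H-2}$ has a local singularity on the diagonal for $H \in (\tfrac12,1)$; the hypothesis that $\Gamma(t,dx) = \Gamma(t,x)dx$ is an actual non-negative measure with $\sup_t \Gamma(t,\R) < \infty$ is precisely what makes $\nu_s$ a well-defined finite measure and permits the Cauchy--Schwarz/symmetry argument. A secondary subtlety is that the identification of $\langle M\rangle_T$ requires first checking the It\^o isometry for simple processes and then extending by density within $\mathcal{P}_0^T$, which is where the finiteness proved in the first step of the plan is used.
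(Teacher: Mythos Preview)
The paper does not give its own proof of this theorem; it simply records it as a particular case of \cite[Prop.~2.9]{Quer-Dalang}. Your sketch is correct and is essentially the argument carried out in that reference (and, before it, in Dalang's foundational paper \cite{Dalang}): one uses the non-negativity of $\Gamma(s,\cdot)$ to interpret $\alpha_H\,\Gamma(s,x)\Gamma(s,y)|x-y|^{2H-2}\,dx\,dy$ as a finite symmetric measure with total mass $\psi(s)$, applies the elementary bound $2ab\le a^2+b^2$ to dominate $\phi_s$ by an integral of $|Z|^2$ against the marginal, invokes the classical BDG inequality for the continuous martingale $t\mapsto\int_0^t\int_\R S\,dW^H$, and finishes with the two Jensen steps exactly as you describe. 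Your identification of the delicate point---that the spatial kernel representation and the measure $\nu_s$ require $\Gamma(s,\cdot)$ to be a genuine non-negative measure of finite mass rather than just a tempered distribution---is precisely why those hypotheses appear in the statement.
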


As far as the case $H<\frac12$ is concerned, we have the following
result (see \cite[Thm. 2.9]{BJQ15}).

\begin{theorem}
        \label{teo: integrability condition H<1/2}
Suppose that $H\in(0,\frac{1}{2})$. Let $\{S(t,x),\, (t,x)\in
[0,T]\times \R\}$ be a predictable process such that, for every
$(\omega,t)$, $S(\omega,t,\cdot)$ defines a tempered function whose
Fourier transform $\F S(\omega,t, \cdot)$ is a locally integrable
function satisfying
$$\EE\left[\int_{0}^{T}\int_{\R} |\F S(t,\cdot)(\xi)|^2 |\xi|^{1-2H}d\xi dt\right]<\infty.$$
Then, $S\in \mathcal{P}_0^T$ and we have the isometry
\[
\EE\left[\left| \int_0^T\int_\R S(t,x) W^H(dt,dx)\right|^2\right] =
\EE\left[\int_{0}^{T}\int_{\R} |\F S(t,\cdot)(\xi)|^2
c_H\,|\xi|^{1-2H}d\xi dt\right].
\]
Moreover, we have the Burkholder-Davis-Gundy inequality: for any
$p\geq 2$,
\begin{equation}
\label{eq: Burkholder H<1/2} \EE\left[\left| \int_0^T\int_\R S(t,x)
W^H(dt,dx)\right|^p\right] \leq z_p \, c_H^{\frac{p}{2}} \,
\EE\left[ \int_{0}^{T}\int_{\R} |\F S(t,\cdot)(\xi)|^2 |\xi|^{1-2H}
d\xi dt \right]^{\frac{p}{2}},
\end{equation}
where the constant $z_p$ is the constant appearing in the classical
Burkholder-Davis-Gundy inequality for continuous martingales.
\end{theorem}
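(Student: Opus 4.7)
The plan is to first establish the isometry on the dense subclass $\mathcal{E}$ of simple processes, and then upgrade to the full class of predictable processes satisfying the Fourier-side integrability assumption, before finally obtaining the Burkholder-Davis-Gundy bound by exploiting the martingale structure in time.

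First, I would take a simple process $g(t,x,\omega)=Y(\omega)\,1_{(r,s]\times(y,z]}(t,x)$ with $Y$ bounded and $\F_r$-measurable. Writing its stochastic integral as prescribed, computing $\EE[\,|\!\int g\,W^H(dt,dx)|^2\,]$ using the $\F_r$-measurability of $Y$ (so that $Y$ and the increment of $W^H$ over $(r,s]\times\R$ are independent), and then plugging in the covariance structure of $W^H$, one obtains exactly $\EE[Y^2](s-r)\,c_H\int_\R |\F 1_{(y,z]}(\xi)|^2|\xi|^{1-2H}d\xi$, which equals $\|g\|_0^2$. Linearity extends this identity to all of $\mathcal{E}$, and then the isometry holds automatically on the completion $\mathcal{P}_0^T$. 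The next step is to argue that any $S$ satisfying the hypothesis of the theorem belongs to $\mathcal{P}_0^T$: the assumption directly gives $\|S\|_0<\infty$, and one approximates $S$ by simple processes by truncating in space and time and mollifying. The crucial technical point in this step is that the Fourier-side norm controls the approximation; since $H<\frac12$ makes $\mathcal{H}_H$ a genuine function space (no distributions), standard mollification arguments on $\F S$ followed by inverse Fourier transform give an approximating sequence in $\mathcal{E}$ converging to $S$ in $\|\cdot\|_0$.

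For the Burkholder-Davis-Gundy inequality, the key observation is that, since $W^H$ is white in time, the process
\[
M_t := \int_0^t\int_\R S(u,x)\,W^H(du,dx),\qquad t\in[0,T],
\]
is a continuous square-integrable martingale with respect to $\{\F_t\}$. Computing its quadratic variation via the isometry established above (applied on intervals $(t,t+h]$ and letting $h\downarrow 0$) gives
\[
\langle M\rangle_t = c_H\int_0^t\int_\R |\F S(u,\cdot)(\xi)|^2\,|\xi|^{1-2H}\,d\xi\,du.
\]
The classical Burkholder-Davis-Gundy inequality then yields $\EE[|M_T|^p]\le z_p\,\EE[\langle M\rangle_T^{p/2}]$, which is precisely the claimed bound \eqref{eq: Burkholder H<1/2}.

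The main obstacle I anticipate is the density step: one needs to verify that the class of simple processes in $\mathcal{E}$ is dense in $\{S \text{ predictable}: \EE[\int_0^T\int_\R |\F S(t,\cdot)(\xi)|^2|\xi|^{1-2H}d\xi\,dt]<\infty\}$ with respect to $\|\cdot\|_0$. For deterministic $S$ this follows from Proposition \ref{cor: density step functions in H^H} together with standard Fourier-analytic approximation, but for predictable processes one additionally has to intertwine the mollification in $(t,x)$ with an adapted discretization in time (e.g.\ replacing $S(t,x)$ by $S(t_k,x)$ on $(t_k,t_{k+1}]$) so that predictability is preserved. Once this is done, everything else reduces to an $L^2$ limit and the application of classical martingale inequalities.
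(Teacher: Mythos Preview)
The paper does not prove this theorem. It is stated as a known result, with the explicit attribution ``see \cite[Thm.~2.9]{BJQ15}'' immediately preceding the statement, and no proof is given in the paper itself. There is therefore nothing to compare your proposal against.

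That said, your sketch is essentially the standard route taken in \cite{BJQ15}: verify the isometry on simple processes, extend by density to $\mathcal{P}_0^T$, show that the Fourier-side hypothesis forces membership in $\mathcal{P}_0^T$, and then obtain the $L^p$ bound from the classical Burkholder--Davis--Gundy inequality applied to the continuous martingale $t\mapsto \int_0^t\int_\R S\,dW^H$. You have correctly identified the only nontrivial step, namely the density of $\mathcal{E}$ in the class of predictable $S$ with $\|S\|_0<\infty$; in \cite{BJQ15} this is handled precisely by combining a time discretization preserving adaptedness with spatial approximation, as you outline. Your sketch is therefore a faithful summary of the argument in the cited reference.
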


\begin{remark}\label{rmk:99}
Owing to \cite[Prop. 2.8]{BJQ15}, the isometry property in the above
Theorem \ref{teo: integrability condition H<1/2} can be equivalently
written as
$$\EE\left[\left| \int_0^T\int_\R S(t,x) W^H(dy,dx)\right|^2\right] =
\EE\left[\tilde{c}_H \int_{0}^{T}\int_{\R^2} |S(t,x)-S(t,y)|^2 |x
-y|^{2H-2}dydx dt\right],$$ where $\tilde{c}_H=\frac{H(1-2H)}{2}$.
Hence, \eqref{eq: Burkholder H<1/2} becomes
$$\EE\left[\left| \int_0^T\int_\R S(t,x) W^H(dy,dx)\right|^p\right]\leq
 z_p \, \tilde{c}_H^{\frac{p}{2}}\,
\EE\left[ \int_{0}^{T}\int_{\R} |S(t,x)-S(t,y)|^2 |x-y|^{2H-2}
dydxdt\right]^{\frac{p}{2}}.$$
\end{remark}

\medskip

The following result is an extension of \cite[Thm.
4.2]{intermittency} to the case $H> \frac 12$. Note that, in this
latter case, though the noise is more regular in space than a white
noise, the corresponding Hilbert space $\H_H$ may be rather big, and
indeed contains genuine distributions. This makes our proof
different compared to the one of \cite[Thm. 4.2]{intermittency}, in
which $\H_H$ is a space of functions (because $H<\frac12$).

\begin{theorem}
\label{teo: equivalence ito-skorohod} Let $H\in[\frac{1}{4},1)$ and
$u=\{u(t,x), \, (t,x)\in[0,\infty)\times \R\}$ be a stochastic
process such that, restricted to $t\in[0,T]$, belongs to
$\mathcal{P}_0^T$. Then, for any $t>0$, $u1_{[0,t]}\in
\mathrm{Dom}(\delta)$ and its Skorohod integral coincides with the
It\^o integral, that is
 \begin{equation*}
        \int_{0}^{\infty} \int_{\R} u(s,x)1_{[0,t]}(s)W^H(\delta s, \delta x)=
        \int_{0}^{t}\int_{\R} u(s,x)W^H(ds,dx), \quad
        \mathbb{P}\text{-a.s.}
        \end{equation*}
\end{theorem}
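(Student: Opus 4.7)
The case $H \in [\frac14, \frac12)$ is exactly \cite[Thm. 4.2]{intermittency}, so I will focus on the new case $H \in [\frac12, 1)$, where $\H_H$ contains genuine spatial distributions. The plan is the classical two-step scheme: first verify the identity on the dense subclass $\mathcal{E}$ of simple processes, and then extend to all of $\mathcal{P}_0^T$ via the closability-type statement of Lemma \ref{lemma: prop A.4}.

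First, I would reduce to elementary processes of the form $g(s,x)=Y\,1_{(r,s_0]\times(y,z]}(s,x)$ with $Y$ a smooth cylindrical $\F_r$-measurable variable, say $Y=f(W^H(\varphi_1),\dots,W^H(\varphi_n))$ with $f\in \C_b^\infty(\R^n)$ and each $\varphi_j\in \H_H$ supported in time in $[0,r]$. This latter requirement is achievable by the very definition of $\F_r$, by approximating any $\F_r$-measurable square-integrable $Y$ through such cylindrical variables. Since $Y\in \mathbb{D}^{1,2}$, Lemma \ref{lemma: prop A.3} gives
\begin{equation*}
\delta(g 1_{[0,t]})=Y\,\delta\bigl(1_{(r\wedge t,\,s_0\wedge t]\times(y,z]}\bigr) - \bigl\langle DY,\, 1_{(r\wedge t,\,s_0\wedge t]\times(y,z]}\bigr\rangle_H.
\end{equation*}
The first term on the right equals the It\^o integral of $g1_{[0,t]}$ by the definition of the latter in Section \ref{subsec: stochastic integral SPDE}. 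The inner product on $\H_H$ factorizes as a $dt$-integral in time of the spatial inner product weighted by $\mu_H$, so the fact that $DY=\sum_j \partial_j f(\cdots)\varphi_j$ has time-support in $[0,r]$ while the indicator has time-support in $(r\wedge t,\,s_0\wedge t]$ forces the second term to vanish, regardless of whether the spatial parts are distributions. By linearity, the identity $\delta(g 1_{[0,t]})=\int_0^t\int_\R g(s,x)\,W^H(ds,dx)$ then extends to all of $\mathcal{E}$.

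Next, for a general $u\in\mathcal{P}_0^T$, pick an approximating sequence $\{u_n\}\subset \mathcal{E}$ with $\|u_n-u\|_0\to 0$. For predictable processes one has $\|v\|_0^2=\EE[\|v 1_{[0,T]}\|_{\H_H}^2]$, so the truncations $u_n 1_{[0,t]}$ converge to $u1_{[0,t]}$ in $L^2(\Omega;\H_H)$. By the previous step and the construction of the It\^o integral, $\delta(u_n 1_{[0,t]})=\int_0^t\int_\R u_n(s,x)\,W^H(ds,dx)$ converges in $L^2(\Omega)$ to $G:=\int_0^t\int_\R u(s,x)\,W^H(ds,dx)$. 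Lemma \ref{lemma: prop A.4} then yields $u 1_{[0,t]}\in \mathrm{Dom}(\delta)$ and $\delta(u 1_{[0,t]})=G$, which is exactly the claim.

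The main obstacle is the bookkeeping in the first step when $H\geq \frac12$ and $\H_H$ carries distributions: one must identify $DY$ sharply enough as an $\H_H$-valued random variable to argue that its time-support lies in $[0,r]$, and simultaneously invoke the factorized form of $\langle\cdot,\cdot\rangle_H$ to turn this time-disjointness into an \emph{exact} cancellation of the correction term in Lemma \ref{lemma: prop A.3}. The proof in \cite[Thm. 4.2]{intermittency} for $H<\frac12$ leaned on a pointwise computation that is unavailable here; once the cylindrical computation above is settled, however, the density-and-closability argument goes through uniformly in $H\in[\frac12,1)$.
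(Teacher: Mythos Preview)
Your proof is correct and follows the same overall two-step scheme as the paper (elementary processes first, then a density-plus-closability extension via Lemma \ref{lemma: prop A.4}). The genuine difference is in how you kill the correction term $\langle DY,\,1_{(r\wedge t,\,s_0\wedge t]\times(y,z]}\rangle_H$ arising from Lemma \ref{lemma: prop A.3}. You restrict at the outset to smooth cylindrical $Y=f(W^H(\varphi_1),\dots,W^H(\varphi_n))$ with each $\varphi_j$ time-supported in $[0,r]$, compute $DY=\sum_j\partial_j f(\cdots)\varphi_j$ explicitly, and then read off the vanishing directly from the $dt$-factorized structure of $\langle\cdot,\cdot\rangle_H$ together with the time-disjointness of the supports. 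The paper instead allows any bounded $\F_r$-measurable $Y\in\mathbb{D}^{1,2}$, writes its Wiener chaos expansion $Y=\sum_{n\geq 0} I_n^H(g_n 1_{[0,r]}^{\otimes n})$ via Lemma \ref{lemma: A.1}, invokes Lemma \ref{lemma: paper greeks} to get $\langle DY,h\rangle_H=\sum_{n\geq 1} n\,I_{n-1}^H(g_n 1_{[0,r]}^{\otimes n}\otimes_1 h)$, and shows each contraction vanishes by a density argument in $\H_H^{\otimes n}$.

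Your route is lighter: it bypasses Lemmas \ref{lemma: A.1} and \ref{lemma: paper greeks} entirely and in particular avoids the somewhat delicate construction of $f\,1_A^{\otimes n}$ for distributional $f\in\H_H^{\otimes n}$ that precedes Lemma \ref{lemma: A.1}. The paper's route, on the other hand, handles a slightly larger class of $Y$ in the first step (not just cylindrical), though this buys nothing in the end since both arguments pass through the same density step anyway. One small point worth tightening in your write-up: the availability of cylindrical approximants with $\varphi_j$ time-supported in $[0,r]$ is justified precisely by the definition of the filtration $\F_r^H$ given in Section \ref{sec:Malliavin}, so you should make that reference explicit rather than appealing to ``the very definition of $\F_r$''.
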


\begin{proof}
The proof is an adaptation of that of \cite[Thm.
4.2]{intermittency}. The only difference is that, here, a general
element of $\mathcal{H}_H$ is not necessarily a function. It is enough to prove
the statement in the case where $u$ is an elementary process of the
form \eqref{eq: elementary process}. The extension to any arbitrary
element of $\mathcal{P}_0^T$ can be done exactly as in Case 2 of the
proof of \cite[Thm. 4.2]{intermittency}.

Let $g$ be an elementary process of the form
$g(\tau,x,\omega)=Y(\omega)1_{(r,s]}(\tau)1_{(y,z]}(x)$, with $0\leq
r< s\leq T$ and $y<z$, where we assume that $Y$ is
$\F_r$-measurable, bounded and belongs to $\mathbb{D}^{1,2}$. We have to check
that $g1_{[0,t]} \in \text{Dom}(\delta)$ and it holds
\[
\delta\left(g1_{[0,t]}\right)= \int_{0}^{t}\int_{\R}
g(\tau,x)W^H(d\tau,dx).
\]
First, we note that $g1_{[0,t]}=Y 1_{[r\wedge t ,s\wedge
t]\times[y,z]}$. Since $Y\in \mathbb{D}^{1,2}$ and $1_{[r\wedge t
,s\wedge t]\times[y,z]}\in \text{Dom}(\delta)$, we can apply Lemma
\ref{lemma: prop A.3} to conclude that $g1_{[0,t]}\in
\text{Dom}(\delta)$ and
$$\delta(g1_{[0,t]})=Y \delta(1_{[r\wedge t ,s\wedge t]\times[y,z]})-
\langle DY, 1_{[r\wedge t ,s\wedge t]\times[y,z]} \rangle_H,$$
 if
the right-hand side above belongs to $L^2(\Omega)$. We clearly have
that $Y \delta(1_{[r\wedge t ,s\wedge t]\times[y,z]})\in
L^2(\Omega)$, and we will show that $\langle DY, 1_{[r\wedge t
,s\wedge t]\times[y,z]} \rangle_H=0$, which will allow us to
conclude the proof.

Let $h:=1_{[r\wedge t ,s\wedge t]\times[y,z]}$. Since $Y$ is
$\F_r$-measurable, we have, by Lemma \ref{lemma:
A.1},
$$Y=\Es{Y|\F_r}=\sum_{n\geq 0}I^H_n\big(g_n1_{[0,r]}^{\otimes  n}\big),$$
for some symmetric $g_n\in \mathcal{H}_H^{\otimes n}$, $n\geq 1$. By
Lemma \ref{lemma: paper greeks} we have that
$$\langle DY,h\rangle_H=\sum_{n\geq 1} n I^H_{n-1}\big(g_n1_{[0,r]}^{\otimes n}\otimes_1 h\big).$$
We claim that $g1_{[0,r]}^{\otimes n}\otimes_1 h=0$, for all $g\in
\mathcal{H}_H^{\otimes n}$. Indeed, if $g=e^{\otimes n}$ for some
function $e\in \mathcal{H}_H$, we have
$$e^{\otimes n} 1_{[0,r]}^{\otimes n}\otimes_1 h=e^{\otimes (n-1)}  1_{[0,r]}^{\otimes (n-1)}
\langle e 1_{[0,r]}, h\rangle_H,$$ and we observe that
$$\langle e 1_{[0,r]}, h\rangle_H=\int_{0}^{\infty}\int_{\R} \F e(s,\cdot)(\xi) 1_{[0,r]}(s)
\overline{\F 1_{[y,z]}(\xi)}1_{[r\wedge t, s\wedge t]}(s)d\xi
ds=0.$$ This can be extended to a generic element in
$\mathcal{H}_H^{\otimes n}$ by linearity and density (using Lemma \ref{lemma: prop A.4}).
\end{proof}


\subsection{Existence and uniqueness of solution}

This section is devoted to recall the well-posedness results for
equations \eqref{wave} and \eqref{heat} and prove that the
corresponding Picard iterations admit a suitable  finite Wiener
chaos expansion.

First, we recall that the solution to our equations is understood in
the mild sense. Namely, an adapted and jointly measurable random
field $u^H=\{u^H(t,x),\, (t,x)\in[0,T]\times \R\}$ solves
\eqref{wave} (resp. \eqref{heat}) if it holds, for all $(t,x)\in
[0,T]\times \R$:
\begin{equation}
\label{eq: mild formulation LMC} u^H(t,x)= \eta+\int_{0}^{t} \int_\R
G_{t-s}(x-y) u^H(s,y)W^H(ds,dy),
\end{equation}
where $G$ is the fundamental solution of the wave (resp. heat)
equation in $\R$ (see \eqref{eq:3}).

\smallskip

The following result is a particular case of \cite[Thm.
4.3]{Quer-Dalang}, which covers the case $H\geq \frac12$.

\begin{theorem}
    \label{teo: existence-uniqueness LMC H>1/2}
    Let $H\in[\frac{1}{2},1)$. There exists a unique mild solution $u^H$ to equation
    \eqref{eq: mild formulation LMC}. Moreover, the solution $u^H$ is $L^2(\Omega)$-continuous
    and satisfies, for every $p\geq 1$,
    $$\sup_{(t,x)\in [0,T]\times \R} \EE\left[|u^H(t,x)|^p\right]<\infty.$$
\end{theorem}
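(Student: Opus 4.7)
The proof follows the classical Picard iteration scheme for stochastic evolution equations, and the main engine is the Burkholder-Davis-Gundy inequality \eqref{eq: Burkholder H>1/2} provided by Theorem \ref{teo: Integrability condition H>1/2}. I would define the iteration $u^{H,0}(t,x) := \eta$ and, inductively,
\[
u^{H,n+1}(t,x) := \eta + \int_0^t \int_\R G_{t-s}(x-y)\, u^{H,n}(s,y)\, W^H(ds,dy),
\]
for $n\ge 0$, and work in the Banach space of predictable processes $Z$ such that $N_p(Z) := \sup_{(t,x)\in[0,T]\times\R} \EE[|Z(t,x)|^p]^{1/p} < \infty$.

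The first task is to check that each iteration is well-defined. For both wave and heat kernels, $G_{t-s}(x-\cdot)$ defines a non-negative tempered distribution with $\Gamma(s,\R)$ bounded in $s\in[0,T]$, and a direct Fourier computation (using $\F G_t(\xi)=\sin(t\xi)/\xi$ in the wave case and $\F G_t(\xi)=e^{-t\xi^2/2}$ in the heat case) shows that $\int_0^T\!\int_\R |\F G_s(\xi)|^2|\xi|^{1-2H}\,d\xi\,ds<\infty$ for $H\in[\tfrac12,1)$. Together with the induction hypothesis $u^{H,n}\in N_p$, Theorem \ref{teo: Integrability condition H>1/2} guarantees that $S^n(s,y):=G_{t-s}(x-y)u^{H,n}(s,y)$ belongs to $\mathcal P_0^T$ and that \eqref{eq: Burkholder H>1/2} applies.

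The second task is to show that $\{u^{H,n}\}$ is Cauchy. From \eqref{eq: Burkholder H>1/2} I would derive, for the difference $\Delta^n(t,x):=u^{H,n+1}(t,x)-u^{H,n}(t,x)$, an inequality of the form
\[
\sup_{x\in\R}\EE[|\Delta^{n+1}(t,x)|^p] \le C_{T,H,p}\int_0^t \sup_{x\in\R}\EE[|\Delta^n(s,x)|^p]\, J_H(t-s)\, ds,
\]
where $J_H(r)=\int_\R |\F G_r(\xi)|^2|\xi|^{1-2H}d\xi$ is locally integrable. A Gronwall-type iteration argument (or the generalized Picard lemma, see for instance Dalang's classical scheme) then yields summability of $\sup_x\EE[|\Delta^n|^p]^{1/p}$, hence convergence in $N_p$ to a predictable limit $u^H$ which satisfies \eqref{eq: mild formulation LMC}. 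Uniqueness is obtained from the same Gronwall-type estimate applied to the difference of two mild solutions. The $L^2$-continuity in $(t,x)$ follows by splitting $u^H(t,x)-u^H(t',x')$ into two stochastic integrals, estimating each via Theorem \ref{teo: Integrability condition H>1/2} with $p=2$, and using dominated convergence together with the time- and space-continuity of $G$ as an element of $\mathcal P_0^T$.

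The main obstacle is really bookkeeping rather than conceptual: one must verify the spectral integrability condition for $G$ uniformly on $[0,T]$ and control the resulting kernel $J_H$ so that the Gronwall iteration converges and yields a finite bound $\sup_{(t,x)}\EE[|u^H(t,x)|^p]<\infty$. Since the author explicitly invokes \cite[Thm. 4.3]{Quer-Dalang}, I would not reprove these kernel estimates from scratch but merely point to the required inputs (Fourier transforms of $G$ and the elementary scaling $\int_\R (1-\cos(2r\xi))|\xi|^{-1-2H}d\xi \asymp r^{2H}$ for the wave case), thereby keeping the proof self-contained modulo the cited reference.
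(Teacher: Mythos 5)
Your proposal is correct and is essentially the paper's own route: the paper does not reprove this theorem but cites \cite[Thm.\ 4.3]{Quer-Dalang}, whose proof is precisely the Picard iteration combined with the Burkholder--Davis--Gundy bound of Theorem \ref{teo: Integrability condition H>1/2} and a Gronwall/extended-Picard argument (needed in the heat case, where the kernel $J_H(r)\sim r^{H-1}$ is singular but integrable, as you note via Dalang's scheme). The only cosmetic point is that the moment bound for $1\leq p<2$ follows from the case $p\geq 2$ by H\"older's inequality, since the BDG estimate itself requires $p\geq 2$.
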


\begin{remark}
The case $H=\frac12$ corresponds to the space-time white noise,
while in the case $H\in(\frac{1}{2},1)$ the noise's spatial
correlation is given by a Riesz kernel of order $2-2H$.
\end{remark}

The case $H\in (\frac14,\frac12)$ has been considered in \cite[Thm.
1.1]{BJQ15}. In the latter reference, the authors proved that
condition $H>\frac14$ is necessary and sufficient in order to have a
solution (see \cite[Prop. 3.7]{BJQ15}).

\begin{theorem}
    \label{teo: existence-uniqueness LMC H<1/2}
    Let $H\in(\frac{1}{4},\frac{1}{2})$. There exists a unique mild solution $u^H$ to
    \eqref{eq: mild formulation LMC}. Moreover, the solution $u^H$ is $L^2(\Omega)$-continuous
    and satisfies, for every $p\geq 2$,
    \begin{equation}
    \label{eq: L^p boundedness solution}
    \sup_{(t,x)\in [0,T]\times \R}\EE\left[|u^H(t,x)|^p\right]<\infty
    \end{equation}
    and
    \begin{equation}
    \label{eq: Sobolev norm Thm 1.1}
    \sup_{(t,x)\in [0,T]\times \R} \int_{0}^{T}\int_{\R^2} G^2_{t-s}(x-y)
    \frac{\EE\left[|u^H(s,y)-u^H(s,z)|^p\right]^{\frac{2}{p }}}{|y-z|^{2-2H}}dydzds<\infty.
    \end{equation}

\end{theorem}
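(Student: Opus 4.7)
The plan is to construct $u^H$ via a Picard iteration, setting $u_0^H(t,x)=\eta$ and
\[
u_{n+1}^H(t,x)=\eta+\int_0^t\int_\R G_{t-s}(x-y)u_n^H(s,y)W^H(ds,dy),
\]
and to show that this sequence is Cauchy in a suitable Banach space of predictable random fields. The main obstacle, specific to the rough regime $H<\frac12$, is that the natural norm $\sup_{(t,x)}\mathbb{E}[|u(t,x)|^p]^{1/p}$ is \emph{not} closed under the stochastic integration operation. Indeed, Burkholder--Davis--Gundy in the equivalent form of Remark \ref{rmk:99}, applied to the integrand $G_{t-s}(x-y)u^H(s,y)$, controls the $p$-th moment of the stochastic integral by an expression involving spatial increments $|u^H(s,y)-u^H(s,z)|$ weighted by $|y-z|^{2H-2}$. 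Consequently the iteration must be analyzed in a Banach space that simultaneously bounds the quantity appearing in \eqref{eq: Sobolev norm Thm 1.1}.

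Accordingly I would define, for a predictable random field $u$,
\[
N_p(u;t):=\sup_{(s,x)\in[0,t]\times\R}\mathbb{E}[|u(s,x)|^p]^{2/p}+\sup_{(s,x)\in[0,t]\times\R}\int_0^s\!\!\int_{\R^2}G_{s-r}^2(x-y)\frac{\mathbb{E}[|u(r,y)-u(r,z)|^p]^{2/p}}{|y-z|^{2-2H}}\,dy\,dz\,dr,
\]
and apply BDG with $S(r,y)=G_{t-r}(x-y)u_n^H(r,y)$. Using the elementary bound $|ab-a'b'|^2\le 2|a-a'|^2|b|^2+2|a'|^2|b-b'|^2$ on the increment $|G_{t-r}(x-y)u_n^H(r,y)-G_{t-r}(x-z)u_n^H(r,z)|^2$, the right-hand side splits into two summands: one involving increments of the Green kernel alone multiplied by $\sup_{(r,y)}\mathbb{E}[|u_n^H(r,y)|^p]^{2/p}$, and another involving the Sobolev-type seminorm of $u_n^H$ multiplied against $G^2$. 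The deterministic Green-kernel estimates — carried out for both fundamental solutions in \eqref{eq:3} in \cite{BJQ15} — show that the first summand is finite precisely under $H>\frac14$, while the second reproduces $N_p(u_n^H;t)$ against a kernel arising from $G^2$.

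The third step is a Gronwall-type argument: one obtains an estimate of the form $N_p(u_{n+1}^H;t)\le C_1+C_2\int_0^t\phi(t-s)N_p(u_n^H;s)\,ds$ for an integrable kernel $\phi$, yielding uniform boundedness of $\{N_p(u_n^H;T)\}_n$; the same estimate applied to $u_{n+1}^H-u_n^H$ provides the Cauchy property, and the iteration converges in the completion to a limit $u^H$ satisfying both \eqref{eq: L^p boundedness solution} and \eqref{eq: Sobolev norm Thm 1.1}. Uniqueness follows by applying the same Gronwall estimate to the difference of two candidate solutions, and $L^2(\Omega)$-continuity is obtained by a direct estimate of $\mathbb{E}[|u^H(t,x)-u^H(t',x')|^2]$ using the already-established bound on $N_p(u^H;T)$. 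The technical crux throughout is the verification that the deterministic Green-kernel integrals remain finite uniformly in $(t,x)$ under the sole restriction $H>\frac14$, for both kernels in \eqref{eq:3}: it is exactly this analysis that makes $H=\frac14$ sharp, in agreement with \cite[Prop. 3.7]{intermittency}.
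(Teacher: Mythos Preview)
Your outline is correct and is exactly the strategy of \cite[Thm.~1.1]{BJQ15}, which the paper cites for this result rather than reproving it; the norm $N_p$ you introduce is (up to harmless normalization) the $\chi^p_H$-norm described just after the statement, and the splitting $|ab-a'b'|^2\le 2|a-a'|^2|b|^2+2|a'|^2|b-b'|^2$ followed by a Gr\"onwall-type iteration is precisely the mechanism the paper later rehearses, with the added uniformity in $H$, in Propositions~\ref{prop: uniform estimates for picard iterations} and~\ref{prop: V_n and W_n}. No gap.
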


\medskip

\begin{remark}\label{rmk:1}
In the case $H\in(\frac{1}{4},\frac{1}{2})$, the solution $u^H$
satisfies, in addition to \eqref{eq: L^p boundedness solution}, the
further constraint \eqref{eq: Sobolev norm Thm 1.1}. This comes from
the fact that, in \cite{BJQ15}, the solution of \eqref{eq: mild
formulation LMC} was proved to exist in the space of
$L^2(\Omega)$-continuous, adapted and jointly measurable processes
endowed with a Sobolev's type norm which included a term of the form
\eqref{eq: Sobolev norm Thm 1.1}.
\end{remark}

In the case $H\in (\frac 14, \frac 12)$, the solution $u^H$ of
\eqref{wave} (and \eqref{heat}) has been found in \cite{BJQ15} as a
limit of the Picard iteration scheme, which is defined by
\begin{equation*}
\begin{split}
u_0^H(t,x) & := \eta \\
u_{m+1}^H(t,x) & :=  \eta +\int_{0}^{t}\int_{\R}
G_{t-s}(x-y)u^H_m(s,y)W^H(ds,dy), \quad m \geq 0,
\end{split}
\end{equation*}
where $(t,x)\in [0,T]\times \R$. The limit is found in the Banach
space $\chi^p_H$, for $p\geq 2$, which is defined as the space of
$L^2(\Omega)$-continuous, adapted and jointly measurable processes
$Y=\{Y(t,x),\,(t,x)\in[0,T]\times\R\}$ such that
$$\|Y\|_{\chi^p_H}:=\|Y\|_{\chi^p_1}+\|Y\|_{\chi^p_{H,2}}<\infty,$$
where,
$$\|Y\|_{\chi^p_1}:=\sup_{(t,x)\in[0,T]\times \R}
\EE\left[|Y(t,x)|^p\right]^{\frac{1}{p}}$$ and
\begin{equation*}
\|Y\|_{\chi^p_{H,2}} := \sup_{(t,x)\in[0,T]\times \R} \Bigg(
\tilde{c}_H \int_{0}^{t} \int_{\R^2} G_{t-s}^2(x-y) \frac{\left(
\EE\left[|Y(s,y)-Y(s,z)|^p\right]
\right)^{\frac{2}{p}}}{|y-z|^{2-2H}} \,dydzds\Bigg)^{\frac{1}{2}}.
\end{equation*}
We recall that the constant $\tilde{c}_H$ has been defined in Remark
\ref{rmk:99}. Notice that the $L^p$-part $\|\cdot\|_{\chi^p_1}$ of
the norm $\|\cdot\|_{\chi^p_H}$ does not depend on $H$, as it is
also pointed out by the notation itself, while the
\textit{Gagliardo-type} part $\|\cdot\|_{\chi^p_{H,2}}$ does depend
on $H$.

\begin{remark}
    \label{rem: normalizing constant C_H for the norm}
In \cite{BJQ15}, the norm $\|\cdot\|_{\chi^p_{H,2}}$ is defined
without the constant $\tilde{c}_H=\frac{H(1-2H)}{2}$. Since the two
definitions give rise to equivalent norms, the results about
existence and uniqueness of solution for equation \eqref{eq: mild
formulation LMC} when $H\in(\frac14,\frac12)$ still hold true. On
the other hand, we will see how adding this normalizing constant
helps us proving some uniform (in $H$) results that will be needed
in the sequel.
\end{remark}

Before stating the main result of the section, we consider the
following Sobolev-type embedding for the space $\chi^p_H$, which
could be of independent interest.
\begin{lemma}
    \label{lemma: sobolev embedding}
Let $p\geq 2$ and $\frac{1}{4}<\alpha\leq \beta<\frac{1}{2}$. Then,
it holds:
 $$\chi^p_\alpha \hookrightarrow \chi^p_\beta.$$
This means that there exists a constant $C$ such that, for every
adapted, jointly measurable and $L^2(\Omega)$-continuous process
$Y$, we have
    \begin{equation}
    \label{eq: inequality sobolev embedding}
    \|Y\|_{\chi^p_\beta}\leq C \|Y\|_{\chi^p_\alpha}.
    \end{equation}
Moreover, it holds the following stronger property for the
Gagliardo-type seminorm $\|\cdot\|_{\chi^p_{\beta,2}}$:
    \begin{equation*}
    \sup_{\beta\in[\alpha,\frac{1}{2})} \|Y\|_{\chi^p_{\beta,2}}\leq \tilde{C}\|Y\|_{\chi^p_\alpha}
    \end{equation*}
    where the constant $\tilde{C}$ only depends on $p$ and $T$.
\end{lemma}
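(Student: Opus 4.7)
Since the $L^p$--part $\|\cdot\|_{\chi^p_1}$ of the norm does not involve $H$, the embedding \eqref{eq: inequality sobolev embedding} will follow at once from the stronger uniform estimate on the Gagliardo seminorm combined with the trivial bound $\|Y\|_{\chi^p_1}\leq\|Y\|_{\chi^p_\alpha}$. The plan is therefore to bound, for every fixed $(t,x)\in[0,T]\times\R$,
\[
I_\beta(t,x):=\tilde{c}_\beta\int_0^t\int_{\R^2}G^2_{t-s}(x-y)\,\frac{\big(\EE[|Y(s,y)-Y(s,z)|^p]\big)^{2/p}}{|y-z|^{2-2\beta}}\,dy\,dz\,ds,
\]
by splitting the $(y,z)$--integration at the threshold $|y-z|=1$: on the region $A:=\{|y-z|<1\}$ we will use the $\chi^p_{\alpha,2}$--seminorm to tame the near--diagonal singularity, while on $A^c:=\{|y-z|\geq 1\}$ the $L^p$--norm of $Y$ will suffice provided we exploit the crucial cancellation between $\tilde{c}_\beta=\beta(1-2\beta)/2$ and the divergent factor $(1-2\beta)^{-1}$ produced by the spatial integral.

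On $A$, the inequality $\beta\geq\alpha$ together with $|y-z|<1$ yields $|y-z|^{-(2-2\beta)}\leq|y-z|^{-(2-2\alpha)}$; moreover a direct computation of $\partial_H\tilde{c}_H$ shows that $H\mapsto\tilde{c}_H$ is decreasing on $(1/4,1/2)$, so $\tilde{c}_\beta\leq\tilde{c}_\alpha$ on $[\alpha,1/2)$. Combining these two observations immediately gives
\[
\tilde{c}_\beta\int_0^t\int_{A}G^2_{t-s}(x-y)\,\frac{\big(\EE[|Y(s,y)-Y(s,z)|^p]\big)^{2/p}}{|y-z|^{2-2\beta}}\,dy\,dz\,ds \leq \|Y\|_{\chi^p_{\alpha,2}}^2.
\]

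On $A^c$, Minkowski's inequality produces the bound $\big(\EE[|Y(s,y)-Y(s,z)|^p]\big)^{2/p}\leq 4\,\|Y\|_{\chi^p_1}^2$. An elementary change of variables gives $\int_{|y-z|\geq 1}|y-z|^{-(2-2\beta)}dz=2(1-2\beta)^{-1}$, and the key identity $\tilde{c}_\beta\cdot 2(1-2\beta)^{-1}=\beta\leq 1/2$ kills the singularity uniformly in $\beta$. The quantity $C_T:=\sup_{(t,x)\in[0,T]\times\R}\int_0^t\int_\R G^2_{t-s}(x-y)\,dy\,ds$ is finite (equal to $T^2/4$ for the wave kernel and to $\sqrt{T/\pi}$ for the heat kernel), so
\[
\tilde{c}_\beta\int_0^t\int_{A^c}G^2_{t-s}(x-y)\,\frac{\big(\EE[|Y(s,y)-Y(s,z)|^p]\big)^{2/p}}{|y-z|^{2-2\beta}}\,dy\,dz\,ds \leq 2C_T\,\|Y\|_{\chi^p_1}^2.
\]

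Adding the two contributions, taking the supremum over $(t,x)\in[0,T]\times\R$, and then extracting a square root gives $\|Y\|_{\chi^p_{\beta,2}}\leq\|Y\|_{\chi^p_{\alpha,2}}+\sqrt{2C_T}\,\|Y\|_{\chi^p_1}\leq\tilde{C}\|Y\|_{\chi^p_\alpha}$ with $\tilde{C}=\max(1,\sqrt{2C_T})$, uniformly in $\beta\in[\alpha,1/2)$, from which \eqref{eq: inequality sobolev embedding} also follows. The main point is conceptual rather than computational: one has to recognise why incorporating the normalizing constant $\tilde{c}_H$ into the definition of the seminorm (Remark~\ref{rem: normalizing constant C_H for the norm}) is essential, since it is precisely this factor that absorbs the $(1-2\beta)^{-1}$ blow--up at the endpoint $\beta=1/2$ and thereby yields an estimate uniform in $\beta$.
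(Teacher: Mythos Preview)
Your proof is correct and follows essentially the same approach as the paper: split the Gagliardo integral at the threshold $|y-z|=1$, control the near-diagonal piece by the $\chi^p_{\alpha,2}$-seminorm (using $\tilde c_\beta\le\tilde c_\alpha$ and $|y-z|^{-(2-2\beta)}\le|y-z|^{-(2-2\alpha)}$ on $\{|y-z|<1\}$), and control the far-field piece by the $\chi^p_1$-norm together with the cancellation $\tilde c_\beta\cdot\frac{2}{1-2\beta}=\beta\le\frac12$. The only cosmetic differences are that the paper makes the change of variable $\overline z=y-z$ explicit and takes square roots from the start, whereas you work with the squared quantity and are slightly more explicit about the monotonicity of $\tilde c_H$ and the value of $C_T$.
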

\begin{proof}
We follow the same lines as in the proof of
\cite[Prop. 2.1]{hitch}. It suffices to prove
\eqref{eq: inequality sobolev embedding} for the
$\|\cdot\|_{\chi^p_{H,2}}$-part of the norm. It holds:
\begin{align}
 \label{eq: first estimate sobolev embedding}
 & \left( \tilde{c}_\beta \int_{0}^{t}\int_{\R^2}  G^2_{t-s}(x-y)
 \frac{\left(\EE\left[|Y(s,y)-Y(s,z)|^p\right]\right)^{\frac{2}{p}}}{|y-z|^{2-2\beta}}\, dydzds\right)^{\frac{1}{2}} \nonumber \\
 &\qquad = \left( \tilde{c}_\beta \int_{0}^{t}\int_{\R^2}  G^2_{t-s}(x-y)
     \frac{\left(\EE\left[|Y(s,y)-Y(s,y-\overline{z})|^p\right]\right)^{\frac{2}{p}}}{|\overline{z}|^{2-2\beta}}\,
     dyd\overline{z}ds\right)^{\frac{1}{2}}\nonumber \\
&\qquad \leq  C(I_1+I_2),
\end{align}
    where we label $I_1$ the term where we integrate in the variable $\overline{z}$
    in the region $|\overline{z}|\geq1$, and $I_2$ the term where we integrate in the region
    $|\overline{z}|<1$. First, we have
    \begin{equation*}
    \begin{split}
    I_1= &  \left( \tilde{c}_\beta  \int_{0}^{t}\int_{\R} \int_{|\overline{z}|\geq 1}
    G^2_{t-s}(x-y)\frac{\left(\EE\left[|Y(s,y)-Y(s,y-\overline{z})|^p\right]\right)^{\frac{2}{p}}}{|\overline{z}|^{2-2\beta}}
    \, d\overline{z}dyds \right)^{\frac{1}{2}} \\
    & \quad \leq  C_p \sup_{(t,x)\in[0,T]\times \R}  \left(\EE\left[|Y(t,x)|^p\right]\right)^{\frac{1}{p}}
    \Bigg(\tilde{c}_\beta  \int_{0}^{t}\int_{\R} \int_{|\overline{z}|\geq 1}
    G^2_{t-s}(x-y)\frac{1}{|\overline{z}|^{2-2\beta}}\, d\overline{z}dyds\Bigg)^{\frac{1}{2}} \\
    \end{split}
    \end{equation*}
    Note that  $\int_{|\overline{z}|\geq 1}\frac{1}{|\overline{z}|^{2-2\beta}}d\overline{z}=
    \frac{2}{1-2\beta}$. Hence,
    \begin{equation*}
    \begin{split}
    &  \tilde{c}_\beta \int_{0}^{t}\int_{\R} \int_{|\overline{z}|\geq 1}
    G^2_{t-s}(x-y)\frac{1}{|\overline{z}|^{2-2\beta}}\, d\overline{z}dyds \\
     & \qquad \leq  \beta\int_{0}^{t}\int_{\R} G^2_{t-s}(x-y)\, dyds \leq \beta C_T \leq \frac{C_T}{2}.
    \end{split}
    \end{equation*}
    Thus, we can conclude that
    $$I_1\leq C_{p,T} \sup_{(t,x)\in[0,T]\times \R} \left(\EE\left[|Y(t,x)|^p\right]\right)
    ^{\frac{1}{p}}.$$
    Regarding $I_2$, we observe that
    \begin{align*}
    I_2 & =  \left( \tilde{c}_\beta  \int_{0}^{t}  \int_{\R} \int_{|\overline{z}|< 1}
    G^2_{t-s}(x-y)  \frac{\left(\EE\left[|Y(s,y)-Y(s,y-\overline{z})|^p\right]\right)^{\frac{2}{p}}}
    {|\overline{z}|^{2-2\beta}}\, d\overline{z}dyds \right)^{\frac{1}{2}} \\
     & \leq \left( \tilde{c}_\alpha  \int_{0}^{t}  \int_{\R} \int_{|\overline{z}|< 1}
     G^2_{t-s}(x-y)  \frac{\left(\EE\left[|Y(s,y)-Y(s,y-\overline{z})|^p\right]\right)^{\frac{2}{p}}}
     {|\overline{z}|^{2-2\alpha}}\, d\overline{z}dyds \right)^{\frac{1}{2}} \\
    & \leq \left( \tilde{c}_\alpha  \int_{0}^{t}  \int_{\R} \int_{\R}   G^2_{t-s}(x-y)
    \frac{\left(\EE\left[|Y(s,y)-Y(s,y-\overline{z})|^p\right]\right)^{\frac{2}{p}}}{|\overline{z}|^{2-2\alpha}}
    \, d\overline{z}dyds \right)^{\frac{1}{2}} \\
     & \leq \sup_{(t,x)\in[0,T]\times \R} \left( \tilde{c}_\alpha   \int_{0}^{t}  \int_{\R}
     \int_{\R}   G^2_{t-s}(x-y)  \frac{\left(\EE\left[|Y(s,y)-Y(s,y-\overline{z})|^p\right]\right)^{\frac{2}{p}}}
     {|\overline{z}|^{2-2\alpha}}\, d\overline{z}dyds \right)^{\frac{1}{2}} \\
    & = \|Y\|_{\chi^p_{\alpha,2}}.
    \end{align*}
    Notice that both the estimate for $I_1$ and $I_2$ are independent of $(t,x)\in[0,T]\times \R$ and
    $\beta\in[\alpha,\frac{1}{2})$. Therefore, we can take the supremum with respect to
    $(t,x)\in [0,T]\times \R$ and $\beta\in[\alpha,\frac{1}{2})$ in the left-hand side of
    \eqref{eq: first estimate sobolev embedding} and we conclude
    \begin{equation*}
    \sup_{\beta\in[\alpha,\frac{1}{2})} \|Y\|_{\chi^p_{\beta,2}}\leq C_{p,T}\|Y\|_{\chi^p_1}+
    \|Y\|_{\chi^p_{\alpha,2}}\leq \tilde{C}\|Y\|_{\chi^p_\alpha},
    \end{equation*}
    which obviously implies
    \begin{equation*}
    \|Y\|_{\chi^p_\beta}\leq (C_{p,T}+1)\|Y\|_{\chi^p_1}+\|Y\|_{\chi^p_{\alpha,2}}
    \leq C \|Y\|_{\chi^p_\alpha},
    \end{equation*}
    for some constant $C$.
\end{proof}

The path H\"older-continuity of the solution to \eqref{eq: mild
formulation LMC} has been proved in \cite{holder} in the case $H\in
(\frac{1}{4},\frac{1}{2})$, while the case $H\in[\frac{1}{2},1)$
follows from the results in \cite{Walsh,SaSa00,SaSa02}. For the sake
of completeness, we state a result which unifies both cases, and
whose proof follows, indeed, as an immediate consequence of the
stronger results Proposition \ref{prop: tightness LMC H<1/2} and
Proposition \ref{prop: tightness LMC H>1/2} proven in Section
\ref{sec:tightness}.

\begin{theorem}
    Let $H\in (\frac{1}{4},1)$.
    Then, the solution $u^H$ to \eqref{eq: mild formulation LMC} satisfies
    the following: for any $p\geq 2$, there exists a constant $C_p>0$
    (which indeed does not depend on $H$) such that, for all
    $t,t'\in [0,T]$ and $x,x'\in \R$, it holds
    \begin{equation*}
    \sup_{x\in\R}\EE\left[|u^H(t',x)-u^H(t,x)|^p\right]\leq C_p|t'-t|^{\gamma p}
    \end{equation*}
    and
    \begin{equation*}
    \sup_{t\in[0,T]}\EE\left[|u^H(t,x')-u^H(t,x)|^p\right]\leq C_p|x'-x|^{Hp},
    \end{equation*}
     where $\gamma=H$ for the wave equation and $\gamma=\frac H2$ for the heat equation. Thus, the process $u^H$ has a modification
     whose trajectories are almost surely $\gamma'$-H\"older continuous in
     time, for all $\gamma'<\gamma$, and $H'$-H\"older continuous in
     space for all $H'<H$.
\end{theorem}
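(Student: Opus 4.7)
The strategy I would follow is to obtain the two stated moment bounds directly, after which the existence of a Hölder-continuous modification is a standard application of Kolmogorov's continuity criterion on the plane. As the remark preceding the statement indicates, the moment estimates themselves are the content of the tightness propositions in Section \ref{sec:tightness}, so the plan is really to extract the uniform (in $H$) moment estimates and then quote Kolmogorov.

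To obtain the increment bounds in time, I would write, for $t<t'$,
\begin{align*}
u^H(t',x)-u^H(t,x) & = \int_{t}^{t'}\!\!\int_{\R} G_{t'-s}(x-y)\, u^H(s,y)\, W^H(ds,dy) \\
& \quad + \int_{0}^{t}\!\!\int_{\R} \bigl[G_{t'-s}(x-y)-G_{t-s}(x-y)\bigr]\, u^H(s,y)\, W^H(ds,dy),
\end{align*}
and similarly decompose $u^H(t,x')-u^H(t,x)$ using the translation of the Green kernel. Then, depending on whether $H\in[\tfrac12,1)$ or $H\in(\tfrac14,\tfrac12)$, I would apply the Burkholder-Davis-Gundy inequality from Theorem \ref{teo: Integrability condition H>1/2} or Theorem \ref{teo: integrability condition H<1/2} together with the $L^p$-bound $\sup_{(s,y)}\EE[|u^H(s,y)|^p]<\infty$ from Theorems \ref{teo: existence-uniqueness LMC H>1/2} and \ref{teo: existence-uniqueness LMC H<1/2}. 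This reduces the matter to estimating deterministic Fourier-side integrals of the form $\int_s^{s'}\!\int_\R |\F G_{r}(\xi)|^2 |\xi|^{1-2H} d\xi\, dr$ and analogous ones with the differences $\F G_{t'-s}-\F G_{t-s}$ or $\F G_{t-s}(1-e^{-i\xi(x'-x)})$.

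Using the explicit expressions $\F G_{t}(\xi)=\sin(t|\xi|)/|\xi|$ (wave) and $\F G_t(\xi)=e^{-t|\xi|^2/2}$ (heat), a change of variables of the type $\eta=(t'-t)\xi$ or $\eta=(x'-x)\xi$ extracts the correct power of the increment. For the wave equation one obtains $(t'-t)^{2H}$ from each of the two pieces above (the first after also integrating $s$ from $t$ to $t'$ produces $(t'-t)^{2H+1}$, which dominates on a bounded time interval), giving $\gamma=H$; for the heat equation the first piece gives $(t'-t)^H$ after performing the $\xi$-integral and scaling, hence $\gamma=H/2$. The space bound works the same way, with $|1-e^{-i\xi(x'-x)}|^2=4\sin^2(\xi(x'-x)/2)$ producing the factor $|x'-x|^{2H}$ uniformly in $t$. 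I would take special care to make the constants $C_p$ independent of $H$; here the normalizing constants $c_H$ and $\tilde c_H$ of Remarks \ref{rmk:99} and \ref{rem: normalizing constant C_H for the norm} play a role in keeping the Fourier-side integrals uniformly bounded as $H\to \tfrac12$, and this is precisely where the uniform Sobolev-type embedding of Lemma \ref{lemma: sobolev embedding} enters in the rough regime.

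The main obstacle is not any single computation but rather the uniform-in-$H$ control in the rough case $H\in(\tfrac14,\tfrac12)$: the BDG bound of Theorem \ref{teo: integrability condition H<1/2} is naturally stated with the constant $c_H^{p/2}$ (or equivalently with $\tilde c_H^{p/2}$ via Remark \ref{rmk:99}), and a careless application would produce $H$-dependent constants that blow up as $H\downarrow \tfrac14$ or as $H\uparrow \tfrac12$. This is resolved by using the $\tilde c_H$-weighted Gagliardo seminorm $\|\cdot\|_{\chi^p_{H,2}}$ and the uniform bound $\sup_{H\in[\alpha,\tfrac12)}\|u^H\|_{\chi^p_{H,2}}\leq C$ established via Lemma \ref{lemma: sobolev embedding} (together with \eqref{eq: Sobolev norm Thm 1.1}), which is exactly the technical content of Proposition \ref{prop: tightness LMC H<1/2}. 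Once the two moment bounds are in place, the Kolmogorov continuity theorem on the plane delivers a modification with trajectories $\gamma'$-Hölder in time and $H'$-Hölder in space for every $\gamma'<\gamma$ and $H'<H$, completing the proof.
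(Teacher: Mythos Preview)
Your high-level strategy is exactly what the paper does: the theorem is deduced immediately from the tightness Propositions \ref{prop: tightness LMC H<1/2} and \ref{prop: tightness LMC H>1/2}, followed by Kolmogorov's criterion. Your sketch of the case $H\in[\tfrac12,1)$ is also accurate and matches Proposition \ref{prop: tightness LMC H>1/2}: the BDG inequality \eqref{eq: Burkholder H>1/2} separates the deterministic kernel from the process $u^H$, and one is left with the Fourier integrals handled by Lemmas \ref{lemma: 3.1}, \ref{lemma: 3.4}, \ref{lemma: 3.5}.

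Your description of the rough case $H\in(\tfrac14,\tfrac12)$, however, contains a genuine gap. The BDG inequality \eqref{eq: Burkholder H<1/2} involves the Fourier transform of the \emph{full integrand} $(s,y)\mapsto G_{t-s}(x-y)u^H(s,y)$, not only of $G$; equivalently (Remark \ref{rmk:99}) it produces a Gagliardo-type integral of the product. Hence the bound $\sup_{(s,y)}\EE[|u^H(s,y)|^p]<\infty$ alone does \emph{not} reduce the problem to deterministic Fourier integrals: after splitting the product difference one also needs control of the quantity in \eqref{eq: Sobolev norm Thm 1.1}, i.e.\ of $\|u^H\|_{\chi^p_{H,2}}$, and of the analogous seminorm for the increments. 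The paper does not obtain this via Lemma \ref{lemma: sobolev embedding} as you suggest---that lemma is not invoked anywhere in Section \ref{sec:tightness}. Instead it works at the level of the Picard iterations: Propositions \ref{prop: uniform estimates for picard iterations}--\ref{prop: condition Q picard iterations} establish the increment bounds for $u^H_m$ by induction on $m$ (uniformly in $H$ and $m$), and then the uniform convergence \eqref{eq:55} transfers them to $u^H$ (Proposition \ref{prop: holder condition solution}). This inductive detour through the Picard scheme, adapted from \cite{holder}, is the substantive mechanism you are missing in the rough regime.
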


\begin{proof}
    As already mentioned, the result follows from
    Propositions \ref{prop: tightness LMC H<1/2} and \ref{prop: tightness LMC H>1/2}
    in Section \ref{sec:tightness}, where the same kind of estimates
    have obtained uniformly with respect to $H$.
\end{proof}

The above Theorems \ref{teo: existence-uniqueness LMC H<1/2} and
\ref{teo: existence-uniqueness LMC H>1/2}, together with Theorem
\ref{teo: equivalence ito-skorohod} on the equivalence between It\^o
and Skorohod integrals, allow us to prove that equations
\eqref{wave} and \eqref{heat} admit a unique \textit{Skorohod mild
solution}. By definition, it is a square integrable random field
$\{u^H(t,x),\, (t,x)\in \R_+\times \R\}$ such that, for all
$(t,x)\in \R_+\times \R$,
\begin{equation}
\label{eq: mild formulation LMC - skorohod} u^H(t,x)=
\eta+\int_{0}^t \int_\R  G_{t-s}(x-y)u^H(s,y)W^H(\delta s,\delta y),
\quad \mathbb{P}\text{-a.s.},
\end{equation}
that is, the process
$v^{(t,x)}:=\{1_{[0,t]}(s)G_{t-s}(x-y)u^H(s,y),\, (s,y)\in
\R_+\times \R\}$ belongs to $\text{Dom}(\delta)$ and
$u^H(t,x)=\eta+\delta\left(v^{(t,x)}\right)$.

\begin{theorem}
    Let $H\in(\frac 14,1)$ and $T>0$.
    Equation \eqref{eq: mild formulation LMC - skorohod} admits a unique adapted solution
    in $[0,T]\times \R$.
\end{theorem}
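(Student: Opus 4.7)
My plan is to transport the existence-and-uniqueness result for the It\^o mild formulation \eqref{eq: mild formulation LMC} to the Skorohod mild formulation \eqref{eq: mild formulation LMC - skorohod} by means of Theorem \ref{teo: equivalence ito-skorohod}, which identifies the two integrals for adapted integrands lying in $\mathcal{P}_0^T$. Once enough integrability of the candidate integrand is secured, the existence and uniqueness provided by Theorems \ref{teo: existence-uniqueness LMC H<1/2} and \ref{teo: existence-uniqueness LMC H>1/2} will transfer directly to the Skorohod setting.

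For existence, let $u^H$ be the unique It\^o mild solution, fix $(t,x)\in[0,T]\times\R$, and set
\[
v^{(t,x)}(s,y):=1_{[0,t]}(s)G_{t-s}(x-y)u^H(s,y),
\]
which is adapted and jointly measurable. The first step is to verify that $v^{(t,x)}\in\mathcal{P}_0^T$. For $H\in[\tfrac12,1)$, this follows from Theorem \ref{teo: Integrability condition H>1/2} applied with $\Gamma(s,\cdot)=G_{t-s}(x-\cdot)$, together with the uniform $L^2$ bound in Theorem \ref{teo: existence-uniqueness LMC H>1/2}. For $H\in(\tfrac14,\tfrac12)$, I would use the isometry of Theorem \ref{teo: integrability condition H<1/2} in its equivalent Gagliardo form (Remark \ref{rmk:99}), whose finiteness is exactly furnished by \eqref{eq: L^p boundedness solution} and \eqref{eq: Sobolev norm Thm 1.1}. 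Applying Theorem \ref{teo: equivalence ito-skorohod} then yields $v^{(t,x)}\in\mathrm{Dom}(\delta)$ and
\[
\delta\!\left(v^{(t,x)}\right)=\int_{0}^{t}\int_{\R}G_{t-s}(x-y)u^H(s,y)W^H(ds,dy),\quad\mathbb{P}\text{-a.s.}
\]
Substituting into \eqref{eq: mild formulation LMC} shows that $u^H$ satisfies the Skorohod mild formulation \eqref{eq: mild formulation LMC - skorohod}.

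For uniqueness, let $\tilde{u}^H$ be any adapted square-integrable Skorohod mild solution. My strategy is to prove, by a Picard-type argument adapted to the Skorohod equation, that $\tilde{u}^H$ must belong to the same Banach space used to construct $u^H$: the uniform $L^2(\Omega)$-space when $H\geq\tfrac12$, and the space $\chi^p_H$ when $H\in(\tfrac14,\tfrac12)$. Once this integrability is in place, the process $\tilde v^{(t,x)}$ analogous to $v^{(t,x)}$ lies in $\mathcal{P}_0^T$, so Theorem \ref{teo: equivalence ito-skorohod} applied in the reverse direction converts the Skorohod equation satisfied by $\tilde u^H$ into the It\^o equation \eqref{eq: mild formulation LMC}. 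The uniqueness statements in Theorems \ref{teo: existence-uniqueness LMC H<1/2} and \ref{teo: existence-uniqueness LMC H>1/2} then force $\tilde{u}^H=u^H$.

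The main obstacle is the verification that $v^{(t,x)}\in\mathcal{P}_0^T$ in the rough regime $H\in(\tfrac14,\tfrac12)$: the norm $\|\cdot\|_0$ encodes a spatial Gagliardo-type seminorm of the integrand, and pointwise $L^p$ bounds on $u^H$ alone are not enough to conclude. This is precisely the role of the additional seminorm $\|\cdot\|_{\chi^p_{H,2}}$ built into the definition of $\chi^p_H$, whose finiteness is supplied by \eqref{eq: Sobolev norm Thm 1.1}; this is the only place in the argument where one genuinely exploits that the solution was constructed in the enhanced space introduced in \cite{BJQ15}, rather than merely in a space of $L^p$-bounded random fields.
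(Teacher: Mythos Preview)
Your existence argument is correct and is essentially what the paper does: for $H\in[\tfrac12,1)$ it explicitly combines Theorems \ref{teo: equivalence ito-skorohod} and \ref{teo: existence-uniqueness LMC H>1/2}, and for $H\in(\tfrac14,\tfrac12)$ it cites \cite[Thm.~4.3]{intermittency} and \cite{nualart H<1/2}, of which your verification that $v^{(t,x)}\in\mathcal{P}_0^T$ via the Gagliardo form of the norm together with \eqref{eq: L^p boundedness solution}--\eqref{eq: Sobolev norm Thm 1.1} is a correct unfolding.

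The uniqueness argument, however, has a genuine gap. To run Theorem \ref{teo: equivalence ito-skorohod} in the reverse direction on an arbitrary adapted Skorohod solution $\tilde u^H$, you need $\tilde v^{(t,x)}\in\mathcal{P}_0^T$, and for that you propose to first show $\tilde u^H$ lies in the relevant Banach space ``by a Picard-type argument''. But the definition of Skorohod solution only gives $\tilde u^H(t,x)\in L^2(\Omega)$ pointwise and $\tilde v^{(t,x)}\in\mathrm{Dom}(\delta)$; there is no a priori \emph{uniform} $L^p$ bound, and Picard iteration constructs approximants rather than bootstrapping regularity of a given solution. Without the uniform bound you cannot invoke Theorem \ref{teo: Integrability condition H>1/2} (for $H\geq\tfrac12$) nor control the Gagliardo seminorm (for $H<\tfrac12$), and the only isometry available for $\delta(\tilde v^{(t,x)})$ is the It\^o one, which already presupposes $\tilde v^{(t,x)}\in\mathcal{P}_0^T$ --- so the argument is circular. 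The route actually used in the references the paper cites (and which extends to $H\geq\tfrac12$ once Theorem \ref{teo: equivalence ito-skorohod} is in hand) avoids this: expand an arbitrary Skorohod solution as $\tilde u^H(t,x)=\sum_{n\geq 0} I_n^H(f_n(\cdot,t,x))$, substitute into \eqref{eq: mild formulation LMC - skorohod}, and use the action of $\delta$ on Wiener chaos to obtain the recursion forcing $f_n=g_n$ with $g_n$ as in \eqref{eq: g_n}. This pins down the kernels, hence the solution, without ever needing $\tilde v^{(t,x)}\in\mathcal{P}_0^T$.
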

\begin{proof}
This result has already been proved in \cite[Thm.
4.3]{intermittency} for the wave equation in the case $H\in(\frac
14,\frac 12)$. In \cite[p. 49]{nualart H<1/2}, the authors notice
that it is also true for the heat equation, still under the
constraint $H\in (\frac 14 , \frac 12)$. The statement's validity in
the case $H\in [\frac 12 ,1)$ follows combining Theorems \ref{teo:
equivalence ito-skorohod} and \ref{teo: existence-uniqueness LMC
H>1/2}.
\end{proof}

Finally, the following result will be crucial in order to identify
the limit law in Theorem \ref{prop: finite dimensional convergence
LMC}.

\begin{theorem}
    \label{teo: equivalence ito-skorohod solution}
Let $H\in (\frac 14,1)$ and $u^H$ be the solution to \eqref{eq: mild
formulation LMC}. Recall that the corresponding Picard iteration
scheme is defined as follows: for any $m\geq 0$, set
    \begin{equation*}
        \begin{split}
            u_0^H(t,x) & := \eta, \\
            u_{m+1}^H(t,x) & := \eta+\int_{0}^{t}\int_{\R}
            G_{t-s}(x-y)u^H_m(s,y)W^H(ds,dy),
        \end{split}
    \end{equation*}
where $(t,x)\in [0,T]\times \R$. Then, for any $m\geq 0$, it holds
$$u_m^H(t,x)=\sum_{n=0}^{m} I^H_n(g_n(\boldsymbol{\cdot},t,x)),$$
where $I^H_n$ is the $n$-th multiple Wiener integral with respect to
$W^H$ and the kernel $g_n(\cdot,t,x)$ is given by
\begin{equation}
\label{eq: g_n} g_n(t_1,x_1,t_2,x_2,\dots,t_n,x_n,t,x):=
G_{t-t_n}(x-x_n)\cdots G_{t_2-t_1}(x_2-x_1) \, \eta
1_{\{0<t_1<\cdots<t_n<t\}}.
\end{equation}
\end{theorem}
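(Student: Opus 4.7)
The plan is to proceed by induction on $m\ge 0$. The base case $m=0$ is immediate: by the convention that the empty product of Green's functions in \eqref{eq: g_n} equals $1$, one has $g_0\equiv \eta$, and hence $I_0^H(g_0)=\eta=u_0^H(t,x)$.

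For the inductive step, I would first apply Theorem \ref{teo: equivalence ito-skorohod} to rewrite the It\^o integral in the Picard iteration as a Skorohod integral,
\[
u_{m+1}^H(t,x) - \eta \;=\; \delta\bigl(v^{(t,x)}\bigr),\qquad v^{(t,x)}(s,y) := 1_{[0,t]}(s)\,G_{t-s}(x-y)\,u_m^H(s,y),
\]
and then substitute the induction hypothesis. By linearity of both the chaos decomposition and the divergence operator, it then suffices to compute, for each $0\le n\le m$, the divergence of the parameterized $n$-th chaos $(s,y)\mapsto 1_{[0,t]}(s)G_{t-s}(x-y)\,I_n^H\bigl(g_n(\cdot,s,y)\bigr)$.

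The key identification is the standard Malliavin-calculus rule: for a kernel $f\in\mathcal{H}_H^{\otimes(n+1)}$, viewing $(s,y)\mapsto I_n^H(f(\cdot,s,y))$ as an $\mathcal{H}_H$-valued random variable with integration parameter $(s,y)$, it belongs to $\mathrm{Dom}(\delta)$ and $\delta\bigl(I_n^H(f(\cdot,\cdot,\cdot))\bigr)=I_{n+1}^H(\tilde f)$, where $\tilde f$ is the symmetrization in all $n+1$ variables. Applied to $f_n(t_1,x_1,\ldots,t_n,x_n,s,y):=1_{[0,t]}(s)G_{t-s}(x-y)g_n(t_1,x_1,\ldots,t_n,x_n,s,y)$, the product $1_{\{0<t_1<\cdots<t_n<s\}}\cdot 1_{[0,t]}(s)$ contained in $f_n$ yields the full simplex indicator $1_{\{0<t_1<\cdots<t_n<s<t\}}$, while the extra Green's function $G_{t-s}(x-y)$ supplies the missing propagator. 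Hence, after relabeling $(s,y)=(t_{n+1},x_{n+1})$, $f_n$ coincides with the expression for $g_{n+1}(\cdot,t,x)$ in \eqref{eq: g_n}. Since multiple Wiener integrals depend only on the symmetrization of the kernel, each term reduces to $I_{n+1}^H(g_{n+1}(\cdot,t,x))$, and summing gives
\[
u_{m+1}^H(t,x)\;=\;\eta+\sum_{n=0}^{m}I_{n+1}^H\bigl(g_{n+1}(\cdot,t,x)\bigr)\;=\;\sum_{n=0}^{m+1}I_n^H\bigl(g_n(\cdot,t,x)\bigr),
\]
which closes the induction.

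The principal obstacle is justifying the parameterized divergence identity used above uniformly for $H\in(\tfrac14,1)$, since when $H\ge \tfrac12$ the space $\mathcal{H}_H$ contains genuine distributions and pointwise manipulations of kernels are not available. I would address this by first establishing the identity on elementary kernels of the form \eqref{eq: elementary function}, where it reduces to a classical algebraic product formula involving Hermite polynomials evaluated at first-order Wiener integrals, and then transferring it to general $g_n\in\mathcal{H}_H^{\otimes n}$ by the density result of Proposition \ref{cor: density step functions in H^H} together with the closure criterion of Lemma \ref{lemma: prop A.4}. The required integrability at each step of the approximation is supplied by Theorems \ref{teo: existence-uniqueness LMC H<1/2}--\ref{teo: existence-uniqueness LMC H>1/2}, which place $v^{(t,x)}$ in $\mathcal{P}_0^T$, and thus in $\mathrm{Dom}(\delta)$ via Theorem \ref{teo: equivalence ito-skorohod}.
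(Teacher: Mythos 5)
Your proposal is correct and follows essentially the same route as the paper, which simply invokes the induction argument of \cite[Thm. 4.3]{intermittency} (divergence of a parameterized $n$-th chaos term yields the $(n+1)$-th chaos with the symmetrized kernel) combined with Theorem \ref{teo: equivalence ito-skorohod} to cover $H\geq \frac12$ and the heat equation. Your explicit treatment of the base case, the kernel identification $1_{[0,t]}(s)G_{t-s}(x-y)g_n(\cdot,s,y)=g_{n+1}(\cdot,t,x)$, and the density/closure argument for $H\geq\frac12$ is exactly the content of that cited proof and its extension.
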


\begin{proof}
The case of the wave equation with $H<\frac 12$ has already been
proved in \cite[Thm. 4.3]{intermittency}. Owing to Theorem \ref{teo:
equivalence ito-skorohod}, the arguments in the proof of the former
theorem can be carried out to easily extend the result to the case
$H\geq \frac 12$ as well as to the heat equation.
\end{proof}


\section{Tightness}
\label{sec:tightness}

Recall that our main result (see Theorem \ref{thm:main}) states
that, if $H_0\in (\frac{1}{4},1)$ and $\{H_n,\,n\in\N\}\subset
(\frac 14,1)$ is any sequence converging to $H_0$, then  $u^{H_n}\to
u^{H_0}$ in law in the space $\C([0,T]\times \R)$ of continuous
functions. The first step in order to prove the above result
consists in checking that the laws of $\{ u^{H_n},\, n\in\N \}$
define a tight family of probability measures on
$\C([0,T]\times\R)$.

We split the computations in the case $H_0\in (\frac 14, \frac 12]$,
which has more involved calculations, and the case $H_0\in [\frac
12,1)$, in which the calculations are more straightforward. We
explain briefly why: in the {\it rough} case, the
Burkholder-Davis-Gundy inequality \eqref{eq: Burkholder H<1/2}
forces us to consider the Fourier transform of the whole integrand
process, while in the case $H\in [\frac 12,1)$, when we use the
Burkholder-Davis-Gundy inequality \eqref{eq: Burkholder H>1/2}, we
only have to compute the Fourier transform of the deterministic part
of the integrand process, which will be explicit in our case.


\subsection{Tightness in the case $(\frac 14, \frac 12)$}
\label{subsec: tightness H<1/2 LMC}

We suppose that the limiting Hurst exponent
$H_0\in(\frac{1}{4},\frac{1}{2}]$. If
$H_0\in(\frac{1}{4},\frac{1}{2})$, we can assume without loss of
generality that the whole sequence $\{H_n, \, n\in\N\}\subset
[\eta_1,\eta_2]\subset (\frac{1}{4},\frac{1}{2})$. If
$H_0=\frac{1}{2}$, we can assume at most that $\{ H_n,\, n\in\N
\}\subset [\eta_1,\frac{1}{2})\subset (\frac{1}{4},\frac{1}{2})$.
From now on we will denote both type of sets as $K$, meaning that
$K=[\eta_1,\eta_2]$ if $H_0\in(\frac{1}{4},\frac{1}{2})$ and
$K=[\eta_1,\frac{1}{2})$ if $H_0=\frac{1}{2}$. Clearly, if the
limiting exponent $H_0=\frac 12$, we cannot suppose that $H_n\to
H_0$ always from below. In Section \ref{subsec: tightness H>1/2
LMC}, we will also handle families of Hurst exponents with $K=(\frac
12, \eta_2]$, so that our result will be complete (because the union
of a finite number of tight families is a tight family itself).

We are ready to state the main result of the present section.

\begin{proposition}
    \label{prop: tightness LMC H<1/2}
    Let $\mathcal{U}_K:=\{u^{H},\, H\in K\}$ be the family of solutions of
    \eqref{eq: mild formulation LMC}, where $K$ is  either of the form $[\eta_1,\eta_2]$,
    with $\eta_1,\eta_2\in (\frac 14, \frac 12)$ and $\eta_1 < \eta_2$, or $K=[\eta_1,\frac 12)$, where $\eta_1\in (\frac 14, \frac 12)$. Then, the family $\mathcal{U}_K$ is tight in $\C([0,T]\times \R)$, endowed with the metric of uniform convergence on compact sets.
\end{proposition}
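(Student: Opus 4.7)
The plan is to apply a Kolmogorov-type tightness criterion for continuous random fields on the plane (collected in the Appendix), which reduces the statement to an initial-value moment bound (immediate, as $u^H(0,\cdot)\equiv\eta$) together with uniform-in-$H$ increment estimates of the form
\begin{equation*}
\sup_{H\in K}\EE\bigl[|u^H(t',x')-u^H(t,x)|^p\bigr]\leq C_p\bigl(|t'-t|^{\gamma p}+|x'-x|^{Hp}\bigr),
\end{equation*}
for $(t,x),(t',x')$ in compact subsets of $[0,T]\times\R$, where $\gamma=H$ for the wave equation and $\gamma=H/2$ for the heat equation. Since $K\subset[\eta_1,\tfrac12)$ with $\eta_1>\tfrac14$, taking $p$ large enough makes the Kolmogorov exponent strictly exceed $2$, which suffices.

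The first step is to establish the uniform-in-$H$ bound $\sup_{H\in K}\|u^H\|_{\chi^p_H}<\infty$. For this I would rerun the Picard-iteration contraction argument of \cite{BJQ15} while tracking every $H$-dependent constant, using the Burkholder--Davis--Gundy inequality in the Gagliardo form of Remark \ref{rmk:99}; the prefactor $\tilde c_H^{p/2}$ appearing there is exactly the normalization built into $\|\cdot\|_{\chi^p_{H,2}}$ (cf.\ Remark \ref{rem: normalizing constant C_H for the norm}). Crucially, Lemma \ref{lemma: sobolev embedding} yields $\sup_{H\in K}\|\cdot\|_{\chi^p_{H,2}}\leq \tilde C\|\cdot\|_{\chi^p_{\eta_1}}$, which allows the Gr\"onwall step for the Picard iterates to be closed with constants independent of $H\in K$.

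For the spatial increment, I would represent the difference as the stochastic convolution
\begin{equation*}
u^H(t,x')-u^H(t,x)=\int_0^t\!\!\int_\R\bigl[G_{t-s}(x'-y)-G_{t-s}(x-y)\bigr]\,u^H(s,y)\,W^H(ds,dy),
\end{equation*}
apply Remark \ref{rmk:99}, and decompose the resulting Gagliardo-type integrand via
\begin{equation*}
|\Delta G(y)u(y)-\Delta G(z)u(z)|^2\leq 2|\Delta G(y)-\Delta G(z)|^2|u(y)|^2+2|\Delta G(z)|^2|u(y)-u(z)|^2,
\end{equation*}
where $\Delta G(\cdot):=G_{t-s}(x'-\cdot)-G_{t-s}(x-\cdot)$. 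The first piece is controlled by the uniform $L^p$-bound on $u^H$ together with a Fourier-side estimate on $\Delta G$ (whose Fourier transform factors as $(e^{-i\xi x'}-e^{-i\xi x})\F G_{t-s}(\xi)$), yielding a clean factor $|x'-x|^{Hp}$. The second piece is dominated by $|x'-x|^{Hp}\cdot\|u^H\|_{\chi^p_{H,2}}^p$ through the same estimates as in \cite{BJQ15}, and is therefore uniform in $H$ by the preceding step. The time increment is handled analogously, by splitting the difference into a stochastic integral over $[0,t]$ with integrand $(G_{t'-s}-G_{t-s})u^H$ and one over $[t,t']$ with integrand $G_{t'-s}u^H$, and using Fourier estimates for time increments of $\F G$.

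The main obstacle throughout is uniformity in $H$: the BDG constant $\tilde c_H$, the singular kernel $|y-z|^{2H-2}$, and the seminorm $\|u^H\|_{\chi^p_{H,2}}$ all depend on $H$ and could a priori degenerate as $H\uparrow\tfrac12$ or $H\downarrow\tfrac14$. The hypothesis $K\subset[\eta_1,\tfrac12)$ controls the $H\downarrow\tfrac14$ endpoint, while the built-in cancellation between $\tilde c_H$ and $|y-z|^{2H-2}$, combined with Lemma \ref{lemma: sobolev embedding}, controls the $H\uparrow\tfrac12$ endpoint. This uniformity in $H$ is exactly what distinguishes the present calculation from the $H$-fixed moment estimates in \cite{BJQ15}.
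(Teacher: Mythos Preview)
Your overall strategy matches the paper's: apply the criterion of Theorem~\ref{th: centsov} and reduce to uniform-in-$H$ increment bounds, obtaining the needed uniform $\chi^p_H$-bounds by rerunning the Picard contraction with all $H$-dependent constants tracked (this is what the paper does in Propositions~\ref{prop: uniform estimates for picard iterations}--\ref{prop: V_n and W_n} and Corollary~\ref{cor: uniform convergence in L^p}). The gap is in the increment estimate itself. You propose to bound the increments of $u^H$ \emph{directly} from the mild formulation, claiming that the ``second piece'' $|\Delta G(z)|^2|u(y)-u(z)|^2$ is dominated, after integration, by $|x'-x|^{Hp}\cdot\|u^H\|_{\chi^p_{H,2}}^p$. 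This is not correct: the only available pointwise comparison is $|\Delta G(z)|^2\leq 2G_{t-s}^2(x'-z)+2G_{t-s}^2(x-z)$, which yields a bound of order $\|u^H\|_{\chi^p_{H,2}}^p$ with \emph{no} factor of $|x'-x|$. Extracting a H\"older factor from that term would require the spatial H\"older modulus of $u^H$, which is precisely the quantity you are proving; the argument is circular. The same issue recurs in the time-increment term on $[0,t]$.

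The paper breaks this circularity by running the increment estimates at the level of Picard iterates as well (Proposition~\ref{prop: condition Q picard iterations}, adapting \cite{holder}): the analogous second piece for $u_{m}^H$ feeds back the H\"older constant $C_{m-1}$ of $u_{m-1}^H$, yielding the recursion $C_{m}\leq C\bigl(c(h_0)+\overline{c}(h_0)\,C_{m-1}\bigr)$ with $\overline{c}(h_0)\to 0$ as $h_0\to 0$. Each $C_m$ is finite by induction, so for $h_0$ small one gets $\sup_m C_m<\infty$, and only then does one pass to the limit $m\to\infty$ via the uniform-in-$H$ $L^p$-convergence of the iterates (Proposition~\ref{prop: holder condition solution}). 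Your direct approach cannot close this loop because the a priori finiteness of the H\"older constant of $u^H$ is unavailable before the argument is complete.
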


We postpone the proof of this result, since we need some preliminar
results. We aim to apply the tightness criterion Theorem
\ref{th: centsov}. Indeed, we will check that conditions (i) and (ii) in the latter result
are satisfied by the Picard iterations $u_m^H$, uniformly with respect to $H$, and then we will pass to the limit
as $m\to\infty$.

First of all, we show that the the Picard iterations $\{u^H_m,\,
m\geq 0\}$ are well-defined and satisfy some estimates uniformly
with respect to $H$. The proof is very similar to that of \cite[Thm.
3.7]{BJQ15}. In fact, we will follow the same steps in its proof and
take care of the fact that we need all estimates uniformly in $H$.
Only the most significant parts of the proof will be written
explicitly.

\begin{proposition}
    \label{prop: uniform estimates for picard iterations}
    Let $p\geq 2$ and $H\in (\frac14,\frac12)$. For any $m\geq 0$, we have that
\begin{itemize}
\item[(i)] $u_m^H(t,x)$ is well-defined, for any $H\in K$ and  $(t,x)\in[0,T]\times \R$.
\item[(ii)] It holds
\[
\sup_{H\in K} \sup_{(t,x)\in[0,T]\times \R} \EE\left[|u_m^H(t,x)|^p\right]<\infty.
\]
\item[(iii)] It holds
\[
 \sup_{H\in K} \sup_{(t,x)\in[0,T]\times \R}    \tilde{c}_H \int_{0}^t \int_{\R^2} G^2_{t-s}(x-y)
    \frac{\left(\EE\left[|u_m^H(s,y)-u_m^H(s,z)|^p\right]\right)^{\frac{2}{p}}}{|y-z|^{2-2H}}\, dydzds<\infty.
\]
\end{itemize}
\end{proposition}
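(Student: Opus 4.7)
The plan is to proceed by induction on $m$. The base case $m=0$ is trivial since $u_0^H \equiv \eta$ is a deterministic constant: (i) and (ii) hold with bound $|\eta|^p$, while (iii) vanishes identically because the spatial increment of $u_0^H$ is zero. For the inductive step, I assume that $u_m^H$ satisfies (i)--(iii) with bounds uniform in $H\in K$ and derive the corresponding estimates for $u_{m+1}^H$. The common engine for all three assertions is the Burkholder-Davis-Gundy inequality in the Gagliardo-type form of Remark \ref{rmk:99}, which matches naturally the definition of the norm $\|\cdot\|_{\chi^p_{H,2}}$.

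For (i) and (ii), I apply BDG to the stochastic integral defining $u_{m+1}^H(t,x)-\eta$ with integrand $S(s,y) := G_{t-s}(x-y)\,u_m^H(s,y)$, and use the elementary splitting
$$|S(s,y)-S(s,z)|^2 \leq 2|G_{t-s}(x-y)-G_{t-s}(x-z)|^2 |u_m^H(s,y)|^2 + 2 G_{t-s}^2(x-z)|u_m^H(s,y)-u_m^H(s,z)|^2.$$
After Minkowski's integral inequality, the right-hand side of BDG is dominated by the sum of $\|u_m^H\|_{\chi^p_1}^p$ times the deterministic integral $\tilde c_H \int_0^t\!\!\int_{\R^2} |G_{t-s}(x-y)-G_{t-s}(x-z)|^2 |y-z|^{2H-2}\,dy\,dz\,ds$ and $\|u_m^H\|_{\chi^p_{H,2}}^p$ times $\sup_{(t,x)\in[0,T]\times\R}\int_0^t\!\int_\R G_{t-s}^2(x-z)\,dz\,ds$, both raised to the $p/2$ power. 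By Plancherel the first deterministic integral equals $c_H \int_0^t\!\!\int_\R |\F G_{t-s}(\xi)|^2 |\xi|^{1-2H}\,d\xi\,ds$, and a scaling argument (setting $\eta=(t-s)\xi$ for the wave kernel and $\eta=\sqrt{t-s}\,\xi$ for the heat kernel) reduces it to an explicit $T$-dependent constant times a bounded continuous function of $H$ on $(0,1)$. Combined with the inductive hypothesis, this yields (i) and (ii) uniformly in $K$.

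For (iii), the same machinery is applied to the stochastic representation of the increment,
$$u_{m+1}^H(s,y)-u_{m+1}^H(s,z) = \int_0^s\!\!\int_\R \big[G_{s-r}(y-w)-G_{s-r}(z-w)\big] u_m^H(r,w)\,W^H(dr,dw),$$
and then plugged into the outer integral $\tilde c_H \int_0^t\!\!\int_{\R^2} G^2_{t-s}(x-y)\,|y-z|^{2H-2}\,(\cdots)\,dy\,dz\,ds$. Using Plancherel on the inner variance produces an expression containing the factor $|1 - e^{i(y-z)\xi}|^2 |\xi|^{1-2H}$, and after carrying out the outer integration in $(y,z)$ one again obtains a finite bound, controlled by $\|u_m^H\|_{\chi^p_1}$ and $\|u_m^H\|_{\chi^p_{H,2}}$ together with a deterministic integral bounded uniformly in $H\in K$ via the same scaling arguments as above.

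The main obstacle is precisely the uniformity in $H$: since $K$ may equal $[\eta_1,\tfrac12)$, which is not compact, one cannot simply appeal to continuity on a compact set. Uniformity is secured by three observations: (a) $c_H$ and $\tilde c_H = H(1-2H)/2$ are continuous on $[0,1]$, with $\tilde c_H$ vanishing helpfully as $H\to\tfrac12$; (b) the Fourier integrals $\int_\R \sin^2(\eta)|\eta|^{-1-2H}d\eta$ and $\int_\R e^{-\eta^2}|\eta|^{1-2H}d\eta$ extend continuously to $H=\tfrac12$; (c) incorporating the factor $\tilde c_H$ into $\|\cdot\|_{\chi^p_{H,2}}$, as highlighted in Remark \ref{rem: normalizing constant C_H for the norm}, is essential to prevent the Gagliardo seminorm from blowing up near $H=\tfrac12$. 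Together these ensure that every constant appearing in the induction stays bounded by a finite quantity depending only on $T$, $p$, and $K$.
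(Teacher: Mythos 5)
Your proposal is correct and follows essentially the same route as the paper's proof, which argues by induction via the rough Burkholder--Davis--Gundy inequality of Remark \ref{rmk:99}, the same splitting of $|S(s,y)-S(s,z)|^2$ into a $G$-increment part and a $u_m^H$-increment part, Plancherel together with the explicit Fourier bounds of Lemma \ref{lemma: 3.1}, and, crucially, the cancellation $\tilde{c}_H\cdot\frac{2\Gamma(2H+1)\sin(\pi H)}{H(1-2H)}=\Gamma(2H+1)\sin(\pi H)$ coming from Lemma \ref{lemma: D.2}, which is exactly your point (c) and is what makes the estimates uniform as $H\to\frac12$ (the paper simply outsources the non-uniform bookkeeping to Steps 2--3 of the proof of Theorem 3.7 in \cite{BJQ15}). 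The only cosmetic differences are that the paper obtains (i) directly from \cite[Thm.~3.7]{BJQ15} rather than from the induction, and that in your bound for (ii) the second term is already the quantity in (iii) for $u_m^H$, so the extra factor $\sup_{(t,x)}\int_0^t\int_{\R}G^2_{t-s}(x-z)\,dz\,ds$ is unnecessary.
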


\begin{proof}
Condition (i) is a direct consequence of \cite[Thm. 3.7]{BJQ15}. In order to prove (ii) and (iii),
we use an induction argument. First, note that these two conditions clearly hold for $m=0$.

Assume that conditions (ii) and (iii) are satisfied by $u_m^H$. We prove that they are also
fulfilled by $u^H_{m+1}$. Precisely, arguing as in Step 2 in the proof of \cite[Thm. 3.7]{BJQ15}
(see p. 18 therein), we have
\begin{align*}
\EE\left[|u_{m+1}^H(t,x)|^p\right] & \leq C\left\{ \eta^p +
\EE\left[  \left| \tilde{c}_H \int_{0}^{T} \int_{\R^2}
\frac{|S_m^H(s,y)-S_m^H(s,z)|^2}{|y-z|^{2H-2}}\, dydzds
\right|^{\frac{p}{2}} \right]\right\},
\end{align*}
where we have used the notation $S_m^H(s,y):=G_{t-s}(x-y)u^H_m(s,y)$ and $C$ is some positive constant.
The expectation on the right hand-side above can be bounded, up to some constant independent of $H$, by
$I_1^H + I_2^H$, where
\[
 I_1^H= \left( \tilde{c}_H \int_{0}^{T} \int_{\R^2}  G^2_{t-s}(x-y)
 \frac{\left(\EE\left[|u_m^H(s,y)-u_m^H(s,z)|^p\right]\right)^{\frac{2}{p}}}{|y-z|^{2-2H}}\, dydzds\right)^{\frac{p}{2}}
\]
and
\[
 I_2^H = \left(  \tilde{c}_H \int_{0}^{T} \int_{\R^2}  \left(\EE\left[| u_m^H(s,z) |^p\right]\right)^{\frac{2}{p}}
 \frac{|G_{t-s}(x-y)-G_{t-s}(x-z)|^2}{|y-z|^{2-2H}} \, dydzds  \right)^{\frac{p}{2}} \\
\]
By the induction hypothesis, the term $I_1^H$ is uniformly bounded in $H$ and $(t,x)$.
Regarding $I_2^H$, using again the induction hypothesis and applying \cite[Prop. 2.8]{BJQ15},
we get
\begin{align*}
I_2^H & \leq  \sup_{H\in K}\sup_{(t,x)\in[0,T]\times \R} \EE\left[| u_m^H(t,x) |^p\right]
 \left(  \tilde{c}_H \int_{0}^{T} \int_{\R^2}   \frac{|G_{t-s}(x-y)-G_{t-s}(x-z)|^2}{|y-z|^{2-2H}}
 \, dydzds  \right)^{\frac{p}{2}}\\
 & \leq C \left(   c_H  \int_{0}^{T}  \int_{\R} |\F G_{t-s}(\xi)|^2 |\xi|^{1-2H} d\xi ds  \right)^{\frac{p}{2}},
\end{align*}
where we recall that the constant $c_H$ is given by
\[
    c_H=\frac{\Gamma(1+2H)\sin(\pi H)}{2\pi}.
\]
Notice that $c_H\leq \frac{1}{2\pi}$, for any $H\in(\frac{1}{4},\frac{1}{2})$.
Moreover, by Lemma \ref{lemma: 3.1}, it holds that
    \begin{equation}
    \label{eq: bound lemma 3.1}
    \int_{0}^{T}  \int_{\R} |\F G_{t-s}(\xi)|^2 |\xi|^{1-2H} d\xi ds \leq  \begin{cases}2^{2H}{C}_{1-2H}\frac{1}{1+2H}T^{1+2H} &  \text{wave equation,} \\
    & \\
    \frac{1}{H}\Gamma(1-H) T^{H}& \text{heat equation.}\end{cases}
    \end{equation}
As explained in Step 1 of the proof of \cite[Thm. 2.8]{paper1}, all constants appearing in \eqref{eq: bound lemma 3.1}
can be bounded uniformly in $H\in K$. This let us conclude that $u^H_{m+1}$ satisfies condition (ii).

It remains to prove that $u^H_{m+1}$ verifies (iii). The
computations follow exactly as in Step 3 of the proof of \cite[Thm.
3.7]{BJQ15}, in such a way that we apply the induction hypothesis,
\cite[Prop. 2.8]{BJQ15} and Lemmas \ref{lemma: 3.1} and \ref{lemma:
D.2}. We omit the details. Nevertheless, we point out why the
presence of the constant $ \tilde{c}_H$ in condition (iii) is
crucial in order to get uniform estimates with respect to $H$.
Precisely, one of the terms appearing in the treatment of the
expression in (iii) for $u^H_{m+1}$ can be bounded by
\begin{equation*}
 A:=  \tilde{c}_H \, C \int_{0}^{t}\int_{\R^2} \frac{G^2_{t-s}(x-y)}{|z|^{2-2H}} \,dy ds
 \int_{0}^{s}\int_{\R} |1-e^{-i\xi z}|^2|\F G_{s-r}(\xi)|^2|\xi|^{1-2H} \,d\xi dr.
\end{equation*}
By Lemma \ref{lemma: D.2}, we have
$$\int_{\R} \frac{|1-e^{-i\xi z}|^2}{|z|^{2-2H}}dz= \frac{2\Gamma(2H+1)\sin(\pi H)}{H(1-2H)}|\xi|^{1-2H}.$$
Hence,
\begin{equation}
\label{eq: last inequality A_2} A \leq  \tilde{c}_H \frac{2
\Gamma(2H+1)\sin(\pi H)}{H (1-2H)} \, C
    \int_{0}^{t}\int_{\R}  G^2_{t-s}(x-y) \,dy ds
    \int_{0}^{s}\int_{\R}    |\F G_{s-r}(\xi)|^2|\xi|^{2(1-2H)} \,d\xi
    dr.
\end{equation}
Note that, by definition of $\tilde{c}_H$ (see Remark \ref{rmk:99}), it holds
$$ \tilde{c}_H \frac{2 \Gamma(2H+1)\sin(\pi H)}{H (1-2H)}=\Gamma(2H+1)\sin(\pi H),$$
and the latter is uniformly bounded for $H\in K$, since it is a
continuous function of $H$. Regarding the integrals in \eqref{eq:
last inequality A_2}, they can be estimated using the explicit
expressions of the fundamental solutions of the wave and heat
equations and applying Lemma \ref{lemma: 3.1}.
\end{proof}

\smallskip

We need to extend condition (ii) in the above proposition to a
uniform estimate with respect to $m\geq 1$. For this, we follow the
arguments of \cite[Section 3.3]{BJQ15}, so we first need the
following result, whose proof follows the same steps of \cite[Thm.
3.8]{BJQ15} and uses analogous arguments as those in Proposition
\ref{prop: uniform estimates for picard iterations}.

\begin{proposition}
    \label{prop: V_n and W_n}
Define, for any $m\geq 0$ and $t\in [0,T]$,
\[
V_m(t):=  \sup_{H\in K}\sup_{x\in\R}
\left(\EE\left[|u^H_m(t,x)-u^H_{m-1}(t,x)|^p\right]\right)^{\frac{2}{p}}
\]
and
\begin{align*}
W_m(t) :=  &  \sup_{H\in K}\sup_{x\in\R} C_H\int_{0}^{t}\int_{\R^2}
G_{t-s}^2(x-y) |y-z|^{2H-2} \\
& \qquad \qquad \times  \left( \EE\left[|u_m^H(s,y)-u^H_{m-1}(s,y) -
u^H_{m}(s,z) + u_{m-1}^H(s,z)|^p\right] \right)^{\frac{2}{p}}\, dy
dz ds.
\end{align*}
Then,
    \begin{equation*}
    V_{m+1}(t)\leq \int_{0}^{t} V_m(s)J_1(t-s)ds +CW_m(t)
    \end{equation*}
    and
    \begin{equation*}
    W_{m+1}(t)\leq \int_{0}^{t} V_m(s)J_2(t-s) ds + \int_{0}^{t} W_m(s)J_1(t-s)ds,
    \end{equation*}
    where $J_1$ and $J_2$ are non-negative integrable functions on $[0,T].$
\end{proposition}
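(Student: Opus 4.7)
The strategy is to follow the proof of \cite[Thm.~3.8]{BJQ15} but keep the normalization $\tilde c_H=H(1-2H)/2$ throughout the estimates, so that the resulting time-convolution kernels $J_1,J_2$ are integrable on $[0,T]$ with bounds uniform in $H\in K$. The two workhorses are the Burkholder--Davis--Gundy inequality \eqref{eq: Burkholder H<1/2} in the Gagliardo form of Remark \ref{rmk:99}, which reduces the $L^p(\Omega)$-moment of a stochastic integral to a deterministic iterated integral of the $L^p(\Omega)$-moments of the integrand's spatial increments via Minkowski's integral inequality, and the elementary product--difference identity $A_1B_1-A_2B_2=A_1(B_1-B_2)+B_2(A_1-A_2)$, which separates the contribution of the Green kernel from that of the stochastic factor $f:=u^H_m-u^H_{m-1}$.

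For the first recursion, start from $u^H_{m+1}(t,x)-u^H_m(t,x)=\int_0^t\!\int_\R G_{t-s}(x-w)f(s,w)\,W^H(ds,dw)$; BDG and Minkowski yield $(\EE|\cdot|^p)^{2/p}\leq C\tilde c_H\int_0^t\!\int_{\R^2}\frac{(\EE|S(s,w_1)-S(s,w_2)|^p)^{2/p}}{|w_1-w_2|^{2-2H}}\,dw_1dw_2ds$ with $S(s,w):=G_{t-s}(x-w)f(s,w)$. The product--difference split produces one term with $G^2_{t-s}(x-w_1)$ paired to the double difference of $f$ at $(s,w_1),(s,w_2)$, bounded after $\sup_x$ by a constant times $W_m(t)$ by the definition of $W_m$, and a second term with $(\EE|f(s,w_2)|^p)^{2/p}\leq V_m(s)$ paired to $|G_{t-s}(x-w_1)-G_{t-s}(x-w_2)|^2|w_1-w_2|^{2H-2}$. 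Translation invariance, Parseval, and Lemma \ref{lemma: D.2} compute $\tilde c_H\int_{\R^2}|G_u(w_1)-G_u(w_2)|^2|w_1-w_2|^{2H-2}\,dw_1dw_2=c_H\int_\R|\F G_u(\xi)|^2|\xi|^{1-2H}\,d\xi=:J_1(u)$, which is in $L^1([0,T])$ uniformly in $H\in K$ by Lemma \ref{lemma: 3.1}; this delivers the desired $\int_0^t V_m(s)J_1(t-s)ds$ contribution.

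For the second recursion, write $\Delta u^H_{m+1}(s,y,z)=\int_0^s\!\int_\R[G_{s-r}(y-w)-G_{s-r}(z-w)]f(r,w)\,W^H(dr,dw)$, apply BDG, Minkowski, and the same product--difference split to the integrand, and substitute back into the definition of $W_{m+1}(t)$. After exchanging spatial and temporal integrations, one is left with a four-fold spatial integral over $(y,z,w_1,w_2)$ weighted by both Gagliardo factors $|y-z|^{2H-2}$ and $|w_1-w_2|^{2H-2}$; two applications of Parseval together with Lemma \ref{lemma: D.2} — one pairing $|y-z|^{2H-2}$ with the $(y,z)$-integration and the other pairing $|w_1-w_2|^{2H-2}$ with the $(w_1,w_2)$-integration — decouple the expression into products of scalar time-kernels of the shape $c_H\int_\R|\F G_u(\xi)|^2|\xi|^{1-2H}d\xi$. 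The piece paired to the double difference of $f$ gives the contribution $\int_0^t W_m(r)J_1(t-r)dr$, and the piece paired to a single-point $|f(r,w_2)|^2$ gives $\int_0^t V_m(r)J_2(t-r)dr$, with $J_2$ a non-negative kernel of the same form and hence in $L^1([0,T])$ uniformly in $H\in K$ by Lemma \ref{lemma: 3.1}.

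The central technical obstacle is the uniformity in $H\in K$ during this four-fold spatial integration: the singular prefactor $\frac{2\Gamma(2H+1)\sin(\pi H)}{H(1-2H)}$ produced by Lemma \ref{lemma: D.2} would diverge as $H\uparrow\tfrac12$, but it is exactly cancelled by the $\tilde c_H=H(1-2H)/2$ retained in Remark \ref{rmk:99}'s form of BDG and in the definitions of $V_m,W_m$, since $\tilde c_H\cdot\frac{2\Gamma(2H+1)\sin(\pi H)}{H(1-2H)}=\Gamma(2H+1)\sin(\pi H)$ is continuous and thus uniformly bounded on $K$. This is the same renormalization mechanism used in Proposition \ref{prop: uniform estimates for picard iterations} and highlighted in Remark \ref{rem: normalizing constant C_H for the norm}.
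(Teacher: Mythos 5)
Your overall strategy is the same as the paper's, which proves this proposition simply by following \cite[Thm.~3.8]{BJQ15} step by step while retaining the normalization $\tilde c_H$, so that through Lemma \ref{lemma: D.2}, Lemma \ref{lemma: 3.1} and the cancellation $\tilde c_H\cdot\frac{2\Gamma(2H+1)\sin(\pi H)}{H(1-2H)}=\Gamma(2H+1)\sin(\pi H)$ the kernels $J_1,J_2$ come out integrable uniformly in $H\in K$ (this is also how Proposition \ref{prop: uniform estimates for picard iterations} is handled). Your first recursion, and the $\int_0^t V_m(s)J_2(t-s)\,ds$ piece of the second, are argued correctly in this spirit.

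There is, however, a genuine gap in your description of how the term $\int_0^t W_m(s)J_1(t-s)\,ds$ is obtained. After BDG in the Gagliardo form of Remark \ref{rmk:99} and the product--difference split, the relevant contribution to $W_{m+1}(t)$ is, with $f:=u^H_m-u^H_{m-1}$,
\begin{equation*}
\tilde c_H^2\int_0^t\!\!\int_0^s\!\!\int_{\R^2}\!\!\int_{\R^2}
G^2_{t-s}(x-y)\,|y-z|^{2H-2}\,
\bigl[G_{s-r}(y-w_1)-G_{s-r}(z-w_1)\bigr]^2\,
\frac{\bigl(\EE\left[|f(r,w_1)-f(r,w_2)|^p\right]\bigr)^{\frac{2}{p}}}{|w_1-w_2|^{2-2H}}
\,dw_1dw_2\,dy\,dz\,dr\,ds,
\end{equation*}
and for this term the ``two applications of Parseval'' you invoke are not available: the factor involving the increments of $f$ is not an explicit function, so the $(w_1,w_2)$-integral cannot be computed via Lemma \ref{lemma: D.2}; and any decoupling of the four-fold spatial integral into scalar time kernels discards the weight $G^2_{\cdot}(\cdot-w_1)$ in the $w_1$-variable, without which the remaining quantity $\int_{\R^2}|w_1-w_2|^{2H-2}\bigl(\EE\left[|f(r,w_1)-f(r,w_2)|^p\right]\bigr)^{2/p}dw_1dw_2$ is not dominated by $W_m$ (indeed it is generically infinite, since nothing localizes $w_1$). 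The opposite crude route --- bounding $[G_{s-r}(y-w_1)-G_{s-r}(z-w_1)]^2\leq 2G^2_{s-r}(y-w_1)+2G^2_{s-r}(z-w_1)$ so as to recognize $W_m(s)$ --- also fails, because one is then left with $\int_{\R^2}G^2_{t-s}(x-y)|y-z|^{2H-2}\,dy\,dz=\infty$: the exponent $2H-2<-1$ is non-integrable at $y=z$, and the compensating vanishing of the $G$-increment has just been thrown away. Reconciling these two competing requirements (compensation of the $|y-z|^{2H-2}$ singularity versus spatial localization in $w_1$) is precisely the delicate point of \cite[Thm.~3.8]{BJQ15}, and it is the one step your sketch does not supply; the uniformity in $H$, which is the only genuinely new ingredient of the present proposition relative to \cite{BJQ15}, is otherwise correctly captured by your $\tilde c_H$ bookkeeping.
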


Next, we have the following result on the convergence of the
underlying Picard iteration scheme, which extends \cite[Thm.
3.9]{BJQ15}:
\begin{theorem}
Let $H\in(\frac14,\frac{1}{2})$ and $p\geq 2$. The sequence
$\{u_m^H, \,m\geq 0\}$ of Picard iterations converges in the space
$\chi^p_H$ to a process $u^H$ which is the unique mild solution of
\eqref{eq: mild formulation LMC}. Moreover, it holds:
\begin{equation}
\lim_{m\to \infty} \sup_{H\in K}\sup_{(t,x)\in[0,T]\times \R}
\EE\left[|u_m^H(t,x)-u^H(t,x)|^p\right]=0.
\label{eq:55}
\end{equation}
\end{theorem}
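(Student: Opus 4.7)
The existence of a unique mild solution $u^H \in \chi^p_H$ and the convergence $u_m^H \to u^H$ in $\chi^p_H$ for each fixed $H \in (\tfrac14,\tfrac12)$ are already provided by Theorem \ref{teo: existence-uniqueness LMC H<1/2} together with \cite[Thm.~3.9]{BJQ15}; the genuinely new content is the uniformity of the convergence over $H \in K$ in \eqref{eq:55}. The plan is to extract this uniformity from Proposition \ref{prop: V_n and W_n}, whose crucial feature is that the kernels $J_1, J_2$ are integrable on $[0,T]$ and do not depend on $H$.

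Starting from $V_0 \equiv \eta^2$ and $W_0 \equiv 0$ (with $V_1, W_1$ finite uniformly in $H \in K$ thanks to Proposition \ref{prop: uniform estimates for picard iterations}), I would iterate the coupled Volterra-type inequalities
\begin{align*}
V_{m+1}(t) &\leq (V_m * J_1)(t) + C\, W_m(t), \\
W_{m+1}(t) &\leq (V_m * J_2)(t) + (W_m * J_1)(t),
\end{align*}
and show by induction that $\sup_{t \in [0,T]}(V_m(t) + W_m(t))$ decays super-exponentially in $m$; a standard majorant argument (treating each convolution with $J_i$ via Young's inequality and absorbing the $CW_m$ term into the subsequent iterate) yields a bound of the form $\sup_t (V_m + W_m)(t) \leq A^m/\Gamma(\alpha m + 1)$ for constants $A,\alpha>0$ depending only on $p$, $T$, and $K$. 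Consequently, the series $\sum_{k \geq 0} \sup_{t\in[0,T]} V_{k+1}(t)^{1/2}$ converges, and since the supremum over $H \in K$ is already built into the definitions of $V_m$ and $W_m$, this bound is automatically uniform in $H \in K$.

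The triangle inequality in $L^p(\Omega)$ then yields
\[
\sup_{H \in K} \sup_{(t,x) \in [0,T] \times \R} \EE\bigl[|u_m^H(t,x) - u^H(t,x)|^p\bigr]^{1/p} \leq \sum_{k \geq m} \sup_{t \in [0,T]} V_{k+1}(t)^{1/2},
\]
and letting $m \to \infty$ proves \eqref{eq:55}; the same tail estimate shows that $\{u_m^H\}_{m \geq 0}$ is Cauchy in $\chi^p_H$ uniformly in $H \in K$, so by completeness it converges to a limit that must coincide with $u^H$ by the uniqueness part of Theorem \ref{teo: existence-uniqueness LMC H<1/2}. The main technical point is verifying that the constants $C$, $J_1$, $J_2$ produced by Proposition \ref{prop: V_n and W_n} can indeed be chosen independently of $H \in K$; this hinges on the cancellation $\tilde c_H \cdot 2\Gamma(2H+1)\sin(\pi H)/[H(1-2H)] = \Gamma(2H+1)\sin(\pi H)$ already exploited in the proof of Proposition \ref{prop: uniform estimates for picard iterations}, which prevents the factor $1/(1-2H)$ arising from the Burkholder estimate from blowing up as $H \uparrow \tfrac12$ in the case $K = [\eta_1, \tfrac12)$.
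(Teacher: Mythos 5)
Your proposal follows essentially the same route as the paper: both rest on Proposition \ref{prop: V_n and W_n} (whose kernels $J_1,J_2$ and constant $C$ are uniform in $H\in K$, thanks exactly to the cancellation $\tilde c_H\cdot\frac{2\Gamma(2H+1)\sin(\pi H)}{H(1-2H)}=\Gamma(2H+1)\sin(\pi H)$ you point out), derive from it a renewal-type recursion, conclude summability of the increments uniformly in $H$, and identify the limit via the uniqueness in Theorem \ref{teo: existence-uniqueness LMC H<1/2}. The paper handles the coupling slightly more economically, setting $M_m=V_m+W_m$ and $J=C(J_1+J_2)$ to get the two-step inequality $M_{m+1}(t)\le\int_0^t\big(M_m(s)+M_{m-1}(s)\big)J(t-s)\,ds$, and then simply invokes the Gr\"onwall-type lemma \cite[Lem.~3.10]{BJQ15}. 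The one soft spot in your sketch is the claim that Young's inequality yields the bound $\sup_t(V_m+W_m)(t)\le A^m/\Gamma(\alpha m+1)$: for kernels that are only known to be nonnegative and integrable on $[0,T]$ (which is all Proposition \ref{prop: V_n and W_n} asserts), Young's inequality gives merely a geometric factor $\|J\|_{L^1}^m$, which need not be summable; the factorial gain is precisely the content of the extended Gr\"onwall lemma (or would require the explicit power-type form of $J_1,J_2$). So you should either cite \cite[Lem.~3.10]{BJQ15} at that point, as the paper does, or track the explicit $(t-s)^\gamma$, $\gamma>-1$, structure of the kernels; with that repair your argument is complete and coincides with the paper's.
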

\begin{proof}
As in the proof of \cite[Thm. 3.9]{BJQ15}, we have to check that the
modified definitions of $V_m$ and $W_m$ still work to show that the
Picard iterations converge to the solution $u^H$, uniformly with
respect to $H\in K$. There is no need to check that the solution is
the same as the one found in \cite{BJQ15}, since for any fixed value
of $H$ the norm $\|\cdot\|_{\chi^H}$ is equivalent to the one
defined in \cite[Def. 3.6]{BJQ15}, as we noticed in Remark \ref{rem:
normalizing constant C_H for the norm}.

Set
    $$M_m(t):=V_m(t)+W_m(t)$$
    and
    $$J(t):=C(J_1(t)+J_2(t)).$$
Then, by Proposition \ref{prop: V_n and W_n}, we have
    $$M_{m+1}(t)\leq \int_{0}^{t} (M_m(s)+M_{m-1}(s))J(t-s)ds.$$
The Gr\"onwall type lemma \cite[Lem. 3.10]{BJQ15}) yields
    $$\sum_{m\geq 1} \sup_{H\in K} \|u^H_m-u_{m-1}^H\|_{\chi^p_H}<\infty.$$
This implies that $\{u_m^H\}_{m\geq 0}$ is a Cauchy sequence in
$\chi^p_H$, uniformly with respect to $H\in K$, and so it converges,
uniformly in $H$, to the limit $u^H$, which we already know that
exists and is unique.
\end{proof}

\smallskip

\begin{corollary}
    \label{cor: uniform convergence in L^p}
    Let $H\in(\frac14,\frac{1}{2})$ and $p\geq 2$. Let $K$ be of the
    form described in Proposition \ref{prop: tightness LMC H<1/2}.
    Then, it holds that
    $$\sup_{H\in K} \sup_{m\geq 0} \sup_{(t,x)\in[0,T]\times \R}\EE\left[|u_m^H(t,x)|^p\right]<\infty.$$
\end{corollary}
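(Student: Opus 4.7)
The plan is to combine the uniform-in-$H$ $L^p$-convergence of the Picard iterations to $u^H$ provided by \eqref{eq:55} with the uniform-in-$(H,t,x)$ estimate for each individual iterate $u_m^H$ established in Proposition \ref{prop: uniform estimates for picard iterations}(ii). The observation is that Proposition \ref{prop: uniform estimates for picard iterations}(ii) already gives the bound for every \emph{fixed} $m$, so the entire content of the corollary is to pass this uniformity across all $m\geq 0$. A standard triangle-inequality argument will do, but it has to be executed in two steps: first derive a uniform bound on the solution $u^H$ itself (which, notably, is not directly supplied by Theorem \ref{teo: existence-uniqueness LMC H<1/2}), and then use that bound to control the tail of $\{u_m^H\}_{m\geq 0}$, while handling the finitely many small indices by direct appeal to Proposition \ref{prop: uniform estimates for picard iterations}(ii).

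Concretely, I would fix $p\geq 2$ and, using \eqref{eq:55}, choose $m_0\geq 0$ so that
$$\sup_{H\in K}\sup_{(t,x)\in[0,T]\times\R}\EE\left[|u_{m_0}^H(t,x)-u^H(t,x)|^p\right]\leq 1.$$
Minkowski's inequality combined with Proposition \ref{prop: uniform estimates for picard iterations}(ii) applied at $m=m_0$ then yields
$$M_0:=\sup_{H\in K}\sup_{(t,x)\in[0,T]\times\R}\EE\left[|u^H(t,x)|^p\right]<\infty.$$
For any $m\geq m_0$, Minkowski's inequality once more together with \eqref{eq:55} gives the uniform bound
$$\EE\left[|u_m^H(t,x)|^p\right]^{1/p}\leq \EE\left[|u_m^H(t,x)-u^H(t,x)|^p\right]^{1/p}+M_0^{1/p}\leq 1+M_0^{1/p}.$$
For the remaining finitely many indices $m\in\{0,1,\dots,m_0-1\}$, Proposition \ref{prop: uniform estimates for picard iterations}(ii) provides a uniform bound for each one, and taking the maximum over this finite set finishes the proof.

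There is no genuine obstacle here; the only subtlety is the order of operations, since the uniform-in-$H$ bound on $u^H$ must be obtained \emph{before} it can be used to control the tail iterates. This is why the existence theorems, which only ensure finiteness of $\sup_{(t,x)}\EE[|u^H(t,x)|^p]$ for each fixed $H$, are not sufficient on their own: the uniformity in $H$ is inherited precisely from the uniform convergence of the Picard scheme given by \eqref{eq:55}.
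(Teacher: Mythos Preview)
Your proposal is correct and is precisely the argument the paper has in mind; the corollary is stated without proof immediately after \eqref{eq:55}, as an immediate consequence of that uniform convergence together with Proposition \ref{prop: uniform estimates for picard iterations}(ii), and your two-step triangle-inequality argument is the natural way to make this explicit.
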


\smallskip

This corollary, together with the lemmas in the Appendix, allow us
to prove the following result, which is an adaptation of \cite[Prop.
2.2]{holder}. Indeed, as in the preceding result, one just needs to
keep track on the constants depending on $H$.

\begin{proposition}
    \label{prop: condition Q picard iterations}
Let $h_0\in(0,1)$ and $p\geq 2$. Then, for all $|h|\leq h_0$,
\[
\sup_{H\in K} \sup_{(t,x)\in[0,T]\times \R}
\EE\left[|u_m^H(t,x+h)-u_m^H(t,x)|^p\right] \leq C_m|h|^{\eta_1 p}
\]
and
\[
\sup_{H\in K} \sup_{(t,x)\in[0,T\wedge(T-h)]\times \R}
\EE\left[|u_m^H(t+h,x)-u_m^H(t,x)|^p\right] \leq
C_m|h|^{\tilde{\eta}_1 p},
\]
where $\tilde{\eta}_1=\eta_1$ for the wave equation
$\tilde{\eta}_1=\frac{\eta_1}{2}$ for the heat equation. The
constant $C_m$ satisfies
\begin{equation*}
 C_m\leq
C(c(h_0)+\overline{c}(h_0)C_{m-1}),
\end{equation*}
where the functions $c,\overline{c}:\R\to\R$ are non-negative and
$\lim\limits_{h_0\to 0} \overline{c}(h_0)=0$. We define $C_{-1}=0$.
\end{proposition}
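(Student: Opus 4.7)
The plan is to argue by induction on $m \geq 0$, following the strategy of \cite[Prop. 2.2]{holder} but tracking the $H$-dependence of every constant that appears. The base case $m=0$ is trivial since $u_0^H \equiv \eta$, so both increments vanish. For the inductive step, I would write
$$
u_m^H(t,x+h) - u_m^H(t,x) = \int_0^t \int_\R [G_{t-s}(x+h-y) - G_{t-s}(x-y)] \, u_{m-1}^H(s,y) \, W^H(ds,dy),
$$
apply the Burkholder--Davis--Gundy inequality in the Gagliardo form of Remark \ref{rmk:99}, and use Minkowski's integral inequality to push the $L^{p/2}(\Omega)$ norm inside the double space integral. Setting $S^H(s,y) := [G_{t-s}(x+h-y) - G_{t-s}(x-y)] u_{m-1}^H(s,y)$, the integrand $|S^H(s,y)-S^H(s,z)|^2$ is split into two standard pieces: one carrying the spatial increment $|u_{m-1}^H(s,y) - u_{m-1}^H(s,z)|$, the other carrying the increment of the deterministic kernel. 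The first is controlled by the induction hypothesis applied to $u_{m-1}^H$ (producing the factor $C_{m-1}$), while the second is controlled by Corollary \ref{cor: uniform convergence in L^p} (producing the inhomogeneous term).

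Both contributions then reduce to deterministic integrals of the form $\tilde c_H \int_0^t \int_{\R^2} (\cdot) \, |y-z|^{2H-2} \, dy\, dz\, ds$. I would convert each one to the Fourier side via \cite[Prop. 2.8]{BJQ15}, obtaining expressions of the type $c_H \int_0^t \int_\R |\F G_{t-s}(\xi)|^2 |e^{i\xi h}-1|^2 |\xi|^{1-2H}\, d\xi \, ds$ and cross variants. These are exactly the kinds of integrals treated in Lemmas \ref{lemma: 3.1} and \ref{lemma: D.2}; after the book-keeping already performed in Proposition \ref{prop: uniform estimates for picard iterations}, they yield the bound $c(h_0)|h|^{2\eta_1}$ for the piece that does not involve $C_{m-1}$, and $\bar c(h_0)|h|^{2\eta_1}$ for the piece that does, with $\bar c(h_0) \to 0$ as $h_0\to 0$, uniformly in $H\in K$. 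Putting the two estimates together gives the claimed inequality with $C_m \leq C(c(h_0)+\bar c(h_0) C_{m-1})$. The time-increment estimate follows the same pattern: split $\int_0^{t+h} = \int_0^t + \int_t^{t+h}$, apply BDG to each piece, and use the induction hypothesis and Corollary \ref{cor: uniform convergence in L^p}; Lemma \ref{lemma: 3.1} then produces the exponent $\tilde\eta_1$ matching the parabolic/hyperbolic scaling of the fundamental solution.

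The main obstacle will be achieving uniformity in $H\in K$ while simultaneously extracting a factor that vanishes as $h_0\to 0$. As in Proposition \ref{prop: uniform estimates for picard iterations}, the crucial point is that the normalizing constant $\tilde c_H = \frac{H(1-2H)}{2}$ built into the Gagliardo seminorm (see Remark \ref{rem: normalizing constant C_H for the norm}) exactly cancels the blow-up $\frac{1}{H(1-2H)}$ produced by Lemma \ref{lemma: D.2} when one passes from the Gagliardo form to the Fourier-weighted one; without this cancellation the constants would diverge as $H\uparrow \frac{1}{2}$ inside $K$. A secondary subtlety is verifying that $\bar c(h_0)\to 0$: this requires separating the $\xi$-integral (or equivalently the $|y-z|$-integral) into the regions $\{|\xi||h|\leq 1\}$ and $\{|\xi||h|>1\}$, and using the explicit form of $|\F G_{t-s}|$ together with the elementary bound $|e^{i\xi h}-1|^2 \leq \min(4, |\xi h|^2)$, exactly as in \cite[Prop. 2.2]{holder}.
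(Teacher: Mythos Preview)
Your proposal is correct and follows essentially the same approach as the paper, which in fact gives no details beyond citing \cite[Prop.~2.2]{holder} and remarking that one must track the $H$-dependence of the constants exactly as in Proposition~\ref{prop: uniform estimates for picard iterations}. The only refinement worth noting is that Lemmas~\ref{lemma: 3.4} and~\ref{lemma: 3.5} (rather than Lemma~\ref{lemma: 3.1} alone) are the Appendix results most directly tailored to the spatial and temporal increment integrals you describe.
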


Putting together \eqref{eq:55} and Proposition \ref{prop: condition
Q picard iterations}, and taking into account that the sequence
$\{C_m, \, m\geq 0\}$ in the latter result is bounded (see \cite[Thm
1.1]{holder}), we finally have the following:
\begin{proposition}
    \label{prop: holder condition solution}
Let $p\geq 2$.
    There exists $h_0>0$ such that, for every $|h|\leq h_0$, it
    holds:
    \begin{equation*}
    \sup_{H\in K}\sup_{(t,x)\in[0,T]\times \R} \EE\left[|u^H(t,x+h)-u^H(t,x)|^p\right]\leq
    C |h|^{\eta_1 p}
    \end{equation*}
    and
    \begin{equation*}
    \sup_{H\in K}\sup_{(t,x)\in[0,T\wedge (T-h)]\times \R} \EE\left[|u^H(t+h,x)-u^H(t,x)|^p\right]
    \leq C |h|^{{\tilde{\eta}}_1 p},
    \end{equation*}
    where $C$ is a constant depending only on $p$,
     $\tilde{\eta}_1=\eta_1$ for the wave equation and
$\tilde{\eta}_1=\frac{\eta_1}{2}$ for the heat equation.
\end{proposition}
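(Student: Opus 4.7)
The plan is to take the limit $m \to \infty$ in the H\"older estimates for the Picard iterations given by Proposition~\ref{prop: condition Q picard iterations}, exploiting the uniform (in $H \in K$) $L^p$-convergence of $u_m^H$ to $u^H$ stated in~\eqref{eq:55}.

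The first step is to verify that the sequence $\{C_m,\, m\geq 0\}$ is bounded, as claimed in the statement. Since $\overline{c}(h_0) \to 0$ as $h_0 \to 0$, I would fix $h_0$ small enough so that $C\overline{c}(h_0) < 1$. Then the recursion $C_m \leq C(c(h_0) + \overline{c}(h_0) C_{m-1})$ together with $C_{-1} = 0$ yields, by induction,
\[
C_m \leq C c(h_0) \sum_{k=0}^{m} (C \overline{c}(h_0))^k \leq \frac{C c(h_0)}{1 - C \overline{c}(h_0)} =: C^*,
\]
uniformly in $m$.

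Next, for fixed $(t, x) \in [0, T] \times \R$ and $|h| \leq h_0$, I would use Minkowski's inequality to split
\begin{align*}
\EE\left[|u^H(t, x+h) - u^H(t,x)|^p\right]^{1/p}
& \leq \EE\left[|u^H(t, x+h) - u_m^H(t, x+h)|^p\right]^{1/p} \\
& \quad + \EE\left[|u_m^H(t, x+h) - u_m^H(t, x)|^p\right]^{1/p} \\
& \quad + \EE\left[|u_m^H(t, x) - u^H(t, x)|^p\right]^{1/p}.
\end{align*}
By Proposition~\ref{prop: condition Q picard iterations}, the middle term is bounded by $(C^*)^{1/p} |h|^{\eta_1}$, uniformly in $H \in K$, $(t, x)$ and $m$. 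By~\eqref{eq:55}, the first and third terms tend to zero as $m \to \infty$, uniformly in $H \in K$ and $(t, x) \in [0, T] \times \R$. Letting $m \to \infty$, taking suprema, and raising to the $p$-th power yields the first assertion. The time-increment bound follows by the identical argument using the second estimate in Proposition~\ref{prop: condition Q picard iterations}, with exponent $\tilde{\eta}_1$ in place of $\eta_1$.

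The main (and only mild) obstacle is ensuring that the limit passage is genuinely uniform in $H \in K$ and $(t, x)$; this however is immediate from the uniformity already built into both~\eqref{eq:55} and Proposition~\ref{prop: condition Q picard iterations}, so nothing further is required beyond the bookkeeping above.
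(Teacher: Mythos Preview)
Your proposal is correct and follows essentially the same approach as the paper: combine the uniform convergence \eqref{eq:55} with the Picard-iteration H\"older estimates of Proposition~\ref{prop: condition Q picard iterations}, using boundedness of $\{C_m\}$. The only difference is that the paper cites \cite[Thm.~1.1]{holder} for the boundedness of $\{C_m\}$, whereas you supply the short geometric-series argument directly from the recursion; otherwise the logic is identical.
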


\smallskip

Now, we have all needed ingredients to prove our tightness result
Proposition \ref{prop: tightness LMC H<1/2}.

\smallskip

\noindent {\it{Proof of Proposition \ref{prop: tightness LMC
H<1/2}}}. We will apply Theorem \ref{th: centsov}. First, we notice
that condition (i) in this criterion is clearly satisfied, since
$u^H(0,0)$ is deterministic and independent of $H$.

In order to check (ii) in Theorem \ref{th: centsov}, we apply
Proposition \ref{prop: holder condition solution} and we deduce
that, for any $t,t'\in [0,T]$ and $x,x'\in \R$ such that $|t'-t|<
h_0$ and $|x'-x|<h_0$, it holds:
\begin{equation}
    \label{eq: estimate holder type final}
    \EE\left[|u^H(t',x')-u^H(t,x)|^p\right]\leq C(|t'-t|^{p\tilde{\eta}_1}+|x'-x|^{p\eta_1}).
\end{equation}
One can easily deduce that estimate \eqref{eq: estimate holder type final} holds
for any $t,t'\in [0,T]$ and any $x,x'$ in a compact set.
\qed


\subsection{Tightness in the case $[\frac 12, 1)$} \label{subsec:
tightness H>1/2 LMC}

We aim to prove an analogous tightness result as Proposition
\ref{prop: tightness LMC H<1/2} for the case $H\geq \frac 12$. We
state it in Proposition \ref{prop: tightness LMC H>1/2} below.

Now, we suppose that the limiting exponent $H_0\in [\frac 12 ,1)$,
so whenever $H_n\to H_0$ we can suppose without loss of generality
that $H_n\in K$, where $K$ is of the form $[\eta_1,\eta_2]$, with
$\eta_1,\eta_2\in [\frac 12,1)$ and $\eta_1\leq \eta_2$. As we
already observed at the beginning of Section \ref{subsec: tightness
H<1/2 LMC}, if we prove the tightness of the family of laws of
$\{u^{H},\, H\in K\}$ also for $K$ of the form considered here, this
will include also the case in which $H_0=\frac 12$ and $H_n\to H_0$
either from above or from below.

The following tightness result will be proved directly, i.e. without
going through the corresponding Picard iteration scheme. This is
because the Burkholder-Davies-Gundy type inequality \eqref{eq:
Burkholder H>1/2} is more practical than its {\it{rough}}
counterpart \eqref{eq: Burkholder H<1/2}.

\begin{proposition}
    \label{prop: tightness LMC H>1/2}
    Let  $\,\mathcal{U}_K:=\{u^{H},\, H\in K\}$ be the family of solutions of \eqref{eq: mild formulation LMC}, where $K$ is of the form $[\eta_1,\eta_2]$, with $\eta_1,\eta_2\in [\frac 12, 1)$ and $\eta_1\leq \eta_2$. Then, the family $\mathcal{U}_K$ is tight in $\C([0,T]\times \R)$, endowed with the metric of uniform convergence on compact sets.
\end{proposition}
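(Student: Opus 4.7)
The plan is to verify the two conditions of the tightness criterion (Theorem \ref{th: centsov}) directly on the solution $u^H$, without passing through Picard iterations as in the rough case. Condition (i) is immediate since $u^H(0,0) = \eta$ does not depend on $H$. The work is therefore in establishing the Kolmogorov-type moment estimate
\[
\sup_{H\in K}\EE\left[|u^H(t',x')-u^H(t,x)|^p\right] \leq C\left(|t'-t|^{\gamma p} + |x'-x|^{\eta_1 p}\right),
\]
for every $p\geq 2$ large enough, $(t,x),(t',x')$ in a compact set, and some $\gamma>0$ (with $\gamma=\eta_1$ for the wave equation and $\gamma=\eta_1/2$ for the heat equation).

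The first step is to establish uniform $L^p$ bounds $\sup_{H\in K}\sup_{(t,x)\in[0,T]\times\R}\EE[|u^H(t,x)|^p]<\infty$. Writing the mild formulation and applying the Burkholder--Davis--Gundy inequality \eqref{eq: Burkholder H>1/2} with $Z(s,y)=u^H(s,y)$ and $\Gamma(s,y)=G_{t-s}(x-y)$, one obtains
\[
\EE\left[|u^H(t,x)|^p\right] \leq C_p\,\eta^p + C_p(\nu_{T,H})^{p/2-1}\int_0^t \sup_{y\in\R}\EE[|u^H(s,y)|^p]\,\phi_H(t-s)\,ds,
\]
where $\phi_H(t-s) = c_H\int_\R |\mathcal{F}G_{t-s}(\xi)|^2 |\xi|^{1-2H}d\xi$. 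The explicit Fourier transforms of $G$ for the wave and heat equations (together with Lemma \ref{lemma: 3.1}) show that $\phi_H$ is integrable on $[0,T]$ and that both $\nu_{T,H}$ and $\int_0^T \phi_H$ are bounded uniformly in $H\in K$, since $c_H$ is continuous on $[\eta_1,\eta_2]$ and the integrand dependencies are mild. A standard Grönwall argument then yields the required uniform $L^p$ bound.

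The second step handles the spatial and temporal increments. For the spatial increment, writing
\[
u^H(t,x')-u^H(t,x) = \int_0^t\int_\R \bigl[G_{t-s}(x'-y)-G_{t-s}(x-y)\bigr] u^H(s,y)\,W^H(ds,dy)
\]
and applying \eqref{eq: Burkholder H>1/2} with the deterministic kernel being the difference, the $p$-th moment is controlled by
\[
C_p \sup_{(s,y)}\EE[|u^H(s,y)|^p] \cdot \left(c_H\int_0^t\int_\R |1-e^{-i\xi(x'-x)}|^2\,|\mathcal{F}G_{t-s}(\xi)|^2|\xi|^{1-2H}d\xi ds\right)^{p/2}.
\]
Using $|1-e^{-i\xi(x'-x)}|^2 \leq C|\xi(x'-x)|^{2H}\wedge 4$ and the explicit Fourier expressions of $G_{t-s}$ yields an upper bound of the form $C|x'-x|^{Hp}$, and the continuity of $c_H$ together with explicit estimates on the resulting integrals (parallel to those in \cite{Quer-Dalang,SaSa00,SaSa02}) gives uniformity in $H\in K=[\eta_1,\eta_2]$, with exponent at least $\eta_1$. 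For the temporal increment, one splits the stochastic integral into the piece over $[t,t']$ and the piece over $[0,t]$ with kernel $G_{t'-s}-G_{t-s}$, and applies \eqref{eq: Burkholder H>1/2} to each. The resulting bounds involve, respectively, $\int_t^{t'}\phi_H(t'-s)ds$ and $c_H\int_0^t\int_\R |\mathcal{F}G_{t'-s}(\xi)-\mathcal{F}G_{t-s}(\xi)|^2|\xi|^{1-2H}d\xi ds$, which by the classical computations for the wave/heat fundamental solutions give, uniformly in $H\in K$, a bound of order $|t'-t|^{\gamma p}$ with $\gamma$ as stated.

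The main obstacle is purely technical: ensuring that every constant produced by these Fourier-based estimates can be bounded uniformly in $H\in[\eta_1,\eta_2]$. This amounts to checking the continuity of $c_H$ and the Grönwall constants in $H$, and to verifying that the explicit bounds in Lemma \ref{lemma: 3.1} (and related appendix lemmas) are continuous in $H$ on $K$. Once the two moment estimates above are established uniformly, an application of Theorem \ref{th: centsov} on each compact $[0,T]\times[-N,N]$ yields tightness in $\C([0,T]\times\R)$ with the topology of uniform convergence on compact sets, concluding the proof.
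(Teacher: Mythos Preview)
Your proposal is correct and follows essentially the same route as the paper's proof: verify condition (i) of Theorem~\ref{th: centsov} trivially, then establish the uniform $L^p$ bound via the Burkholder--Davis--Gundy inequality \eqref{eq: Burkholder H>1/2} combined with Lemma~\ref{lemma: 3.1} and a Gr\"onwall argument, and finally control the spatial and temporal increments using the same BDG estimate together with the Fourier-side Lemmas~\ref{lemma: 3.4} and~\ref{lemma: 3.5} (which are exactly the ``classical computations'' you allude to), splitting the time increment into the $[0,t]$ and $[t,t']$ pieces just as you describe. The paper organizes this into three labeled steps but the content is the same as yours.
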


\begin{proof}
    We will apply again Theorem \ref{th: centsov}. We split the proof in three
    steps.
\smallskip

\noindent {\it{Step 1}}: We show the uniform estimate
    \begin{equation}
    \label{eq: Lp bound uniform in H}
    \sup_{H\in [\eta_1,\eta_2]}\sup_{(t,x)\in[0,T]\times \R}\EE\left[|u^H(t,x)|^p\right]<\infty.
    \end{equation}
We have
\begin{equation*}
\EE\left[|u^H(t,x)|^p\right]
    \leq  C \left( 1+ \EE\left[\left|\int_{0}^{t}\int_{\R} G_{t-s}(x-y)u^H(s,y)W^H(ds,dy)\right|^p
    \right]\right).
\end{equation*}
By Theorem \ref{teo: Integrability condition H>1/2}, we obtain that
the expectation in the right hand-side above can be bounded, up to
some positive constant, by
\begin{equation}
\label{eq:234}
c_H (\nu_{t,H})^{\frac{p}{2}-1} \int_{0}^{t} \sup_{H\in
[\eta_1,\eta_2]} \sup_{x\in\R} \EE\left[|u^H(s,x)|^p\right]
 \int_{\R} |\F G_{t-s}(x-\cdot)(\xi)|^2|\xi|^{1-2H}\, d\xi ds,
\end{equation}
where $\nu_{t,H}$ is defined by
\begin{equation*}
\nu_{t,H} =  c_H\int_{0}^{t}\int_{\R} |\F
G_{s}(\xi)|^2|\xi|^{1-2H}d\xi ds.
\end{equation*}
We recall that $c_H=\frac{\Gamma(2H+1)\sin(\pi H)}{2\pi}$, which is
bounded by $\frac{1}{\pi}$, for all $H$. Moreover, by Lemma
\ref{lemma: 3.1}, it holds that
\begin{equation*}
 \sup_{H\in[\eta_1,\eta_2]}\sup_{t\in [0,T]} \nu_{t,H} < \infty.
\end{equation*}
Note that this holds for both wave and heat equations. On the other
hand, regarding the integral in $d\xi$  in \eqref{eq:234}, we can argue as follows. In the case of the wave
equation, we have
\begin{align*}
\int_{\R} |\F G_{t-s}(x-\cdot)(\xi)|^2|\xi|^{1-2H}d\xi & =
    2\int_{0}^{\infty} \frac{\sin^2((t-s)\xi)}{\xi^{1+2H}}d\xi \nonumber \\
    & = 2(t-s)^{2H} 2^{2H-1} C_{1-2H} \\
    & \leq  T^{2H}2^{2H}C_{1-2H},
\end{align*}
where the constant $C_{1-2H}$ is the same one appearing in Lemma
\ref{lemma: 3.1}. As showed in the proof of \cite[Thm. 2.8]{paper1},
$C_{1-2H}$ defines a continuous function with respect to
$H\in(0,1)$, so it can be bounded by a constant when
$H\in[\eta_1,\eta_2]$. Thus, for the wave equation we can conclude
that
\begin{equation*}
    \sup_{H\in [\eta_1,\eta_2]}\sup_{x\in\R} \EE\left[|u^H(t,x)|^p\right] \leq
    C \left(1 +  \int_{0}^{t} \sup_{H\in [\eta_1,\eta_2]} \sup_{x\in\R} \EE\left[|u^H(s,x)|^p
    \right] ds \right).
\end{equation*}
Hence, Gr\"onwall lemma implies \eqref{eq: Lp bound uniform in H}.

In the case of the heat equation, we have
\begin{align*}
\int_{\R}|\F G_{t-s}(x-\cdot)(\xi)|^2|\xi|^{1-2H}d\xi &
= \int_{\R}e^{-(t-s)|\xi|^2}|\xi|^{1-2H}d\xi  \\
& = \frac 12 (t-s)^{H-1}\int_{0}^{\infty} e^{-y} y^{-H}  dy \\
& = \Gamma(1-H) (t-s)^{H-1}.
\end{align*}
Observe that, for all $H\in[\eta_1,\eta_2]$, it holds $\Gamma(1-H)
(t-s)^{H-1}\leq g(t-s)$, where
$${g}(r):=\Gamma(1-\eta_2) \begin{cases}
    r^{\eta_1-1}, & r<1 \\
    1, & r>1.
\end{cases}
$$
Therefore,
\begin{equation*}
    \sup_{H\in [\eta_1,\eta_2]}\sup_{x\in\R} \EE\left[|u^H(t,x)|^p\right] \leq
    C \left(1 +  \int_{0}^{t} \sup_{H\in [\eta_1,\eta_2]} \sup_{x\in\R} \EE\left[|u^H(s,x)|^p
    \right] g(t-s)ds \right).
\end{equation*}
The Gr\"onwall type lemma proved in \cite[Lem. 15]{Dalang} let us
conclude that \eqref{eq: Lp bound uniform in H} is also fulfilled in
the case of the heat equation.

\smallskip

\noindent {\it{Step 2}}: In this part of the proof, we deal with the
moments of the space increments of the solution $u^H$. Precisely,
owing to Theorem \ref{teo: Integrability condition H>1/2}, the
estimate \eqref{eq: Lp bound uniform in H} and Lemma \ref{lemma:
3.4}, we can infer that, for all $p\geq 2$ and $|h|\leq 1$,
\begin{align*}
& \EE\left[|u^H(t,x+h)- u^H(t,x)|^p\right]\\
& \qquad
 \leq C \, c_H^{\frac p2}
\left(\int_{0}^{t}
 \int_{\R} \left|\F\left( G_{t-s}(x-\cdot)+G_{t-s}(x+h-\cdot) \right)(\xi)\right|^2
 |\xi|^{1-2H}\, d\xi ds\right)^{\frac p2}\\
& \qquad = C \, c_H^{\frac p2} \left( \int_{0}^{t} \int_{\R}
(1-\cos(h\xi))|\F G_s(\xi)|^2 |\xi|^{1-2H} \, d\xi ds
\right)^{\frac{p}{2}} \\
& \qquad \leq C \, \tilde{C}_H^{\frac p2}  |h|^{Hp}.
\end{align*}
The constant $\tilde{C}_H$ is the same appearing in \cite[Lem.
3.4]{BJQ15}, and it is given by
$$\tilde{C}_H:=\int_{\R}(1-\cos(\theta))|\theta|^{-1-2H}d\theta<\frac{1}{H}+\frac{1}{1-H}\leq C,$$
provided that $H\in[\eta_1,\eta_2].$  Thus, we have proved that
$$\sup_{H\in [\eta_1,\eta_2]}\sup_{(t,x)\in[0,T]\times \R}
\EE\left[|u^H(t,x+h)-u^H(t,x)|^p\right] \leq C|h|^{\eta_1 p}.$$

\noindent {\it{Step 3}}: Here, we aim to prove that, for any $p\geq
2$ and $|h|<1$,
\begin{equation}
 \sup_{H\in
[\eta_1,\eta_2]}\sup_{(t,x)\in[0 \vee(-h),T\wedge(T-h)]\times \R}
\EE\left[|u^H(t+h,x)-u^H(t,x)|^p\right]
 \leq \begin{cases}
  C |h|^{\eta_1 p} & \text{ wave equation,}\\
  C|h|^{\frac{\eta_1  }{2}p}  & \text{ heat equation.}
 \end{cases}
 \label{eq:123}
\end{equation}
Assume that  $h>0$ (the case $h<0$ is completely analogous). Then,
\[
\E{|u^H(t+h,x)-  u^H(t,x)|^p}\leq C(B_1+B_2),
\]
where
\[
B_1 := \EE \left[\left|\int_{0}^{t}\int_{\R}
[G_{t+h-s}(x-y)-G_{t-s}(x-y)]u^H(s,y)W^H(ds,dy) \right|^p\right],
\]
\[
B_2 := \EE\left[\left|\int_{t}^{t+h}\int_{\R}
G_{t+h-s}(x-y)u^H(s,y)W^H(ds,dy)\right|^p\right].
\]
Theorem \ref{teo: Integrability condition H>1/2}, \eqref{eq: Lp
bound uniform in H} and Lemma \ref{lemma: 3.5} yield
\begin{align}
B_1 & \leq C \, c_H^{\frac p2} \left(\int_{0}^{t}\int_{\R} \left|\F
\left( G_{t+h-s}(x-\cdot)-G_{t-s}(x-\cdot) \right)(\xi)
\right|^2|\xi|^{1-2H}d\xi ds\right)^{\frac p2}\nonumber \\
& \leq C \, c_H^{\frac p2}\left( \int_{0}^{T} \int_{\R} \left|\F
G_{s+h}(\xi)-\F G_{s}(\xi) \right|^2|\xi|^{1-2H} d\xi ds
\right)^{\frac p2}\nonumber \\
& \leq C \, \begin{cases}
    |h|^{H p}, & \text{ wave equation,} \\
    |h|^{\frac{H}{2}p}, & \text{ heat equation.}
    \end{cases}
\label{eq:68}
\end{align}

Regarding the term $B_2$, we can argue as before but we apply Lemma
\ref{lemma: 3.1}. Indeed, we have that
\begin{align}
B_2 & \leq C\, c_H^{\frac p2} \left(\int_{t}^{t+h}\int_{\R} |\F
G_{t+h-s}(x-\cdot)(\xi)|^2|\xi|^{1-2H} d\xi ds\right)^{\frac p2} \nonumber \\
& = C\, c_H^{\frac p2} \int_{0}^{h} \int_\R |\F
G_{s}(\xi)|^2|\xi|^{1-2H} d\xi ds \nonumber \\
& \leq C
\begin{cases}
     |h|^{\frac{1+2H}{2}p}, & \text{ wave equation,} \\
     |h|^{\frac{H}{2}p}, & \text{ heat equation.} \\
    \end{cases}
\label{eq:69}
\end{align}
Putting together \eqref{eq:68} and \eqref{eq:69}, and taking into
account that $H\in [\eta_1,\eta_2]$, we end up with \eqref{eq:123}.

\smallskip

Finally, the results in Steps 2 and 3 let us conclude that, for any
$t,t'\in [0,T]$ and $x,x'$ in a compact of $\R$, we have
\[
\EE\left[|u^H(t',x')-u^H(t,x)|^p\right]\leq C  \begin{cases}
    |t'-t|^{\eta_1 p} + |x'-x|^{\eta_1 p}, &  \text{ wave equation,}\\
    |t'-t|^{\frac{\eta_1}{2} p}+ |x'-x|^{\eta_1 p}, & \text{ heat equation} \\
    \end{cases}.
\]
Thus, it suffices to take $p>\frac{4}{\eta_1}$ for the heat equation
and $p>\frac{2}{\eta_1}$ for the wave equation to be able to apply
the tightness criterion Theorem \ref{th: centsov}.
\end{proof}

\medskip

The following result extends Corollary \ref{cor: uniform convergence
in L^p} to the case $H\geq\frac{1}{2}$. Its proof is very similar to
that of \cite[Thm. 13]{Dalang}, and the
    terms that need to be estimated uniformly with respect to $H$ are
    completely analogous as those appearing in Step 1 of the proof
    of the above Proposition \ref{prop: tightness LMC H>1/2}.

\begin{lemma}
    \label{lemma: uniform convergence in L^p}
Let $H\geq \frac{1}{2}$ and  $\{u^H_m, \, m\geq 0\}$ be the sequence
of Picard iterations corresponding to the mild formulation
\eqref{eq: mild formulation LMC}. Then, for any $p\geq 2$, $u_m^H$
converges in $L^p(\Omega)$ to the solution $u^H$ uniformly with
respect to $H\in K$, i.e.
$$\lim_{m\to \infty} \sup_{H\in K} \sup_{(t,x)\in [0,T]\times \R}
\EE\left[|u_m^H(t,x)-u^H(t,x)|^p\right] = 0$$
\end{lemma}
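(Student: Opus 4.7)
The plan is to adapt the Gr\"onwall-type scheme of \cite[Thm. 13]{Dalang} to the Picard iterations, tracking carefully the dependence on $H\in K=[\eta_1,\eta_2]$ by means of the $H$-uniform spectral estimates already established in Step 1 of the proof of Proposition \ref{prop: tightness LMC H>1/2}. Define, for $m\geq 1$,
$$\phi_m(t):=\sup_{H\in K}\sup_{x\in\R}\EE\left[|u^H_m(t,x)-u^H_{m-1}(t,x)|^p\right],$$
with the convention $u^H_{-1}\equiv 0$. The objective is to show that $\sum_{m\geq 1}(\sup_{t\in[0,T]}\phi_m(t))^{1/p}<\infty$. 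Once this is achieved, $\{u^H_m\}_{m\geq 0}$ is Cauchy in $L^p(\Omega)$ uniformly in $(t,x,H)\in[0,T]\times\R\times K$, and its limit must coincide with $u^H$ thanks to the uniqueness part of Theorem \ref{teo: existence-uniqueness LMC H>1/2}.

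The base case $\sup_t\phi_1(t)<\infty$ follows at once from the Burkholder-Davis-Gundy inequality \eqref{eq: Burkholder H>1/2} applied to the stochastic integral defining $u^H_1-\eta$, combined with the uniform bound \eqref{eq: Lp bound uniform in H}. For the inductive step, the representation
$$u^H_{m+1}(t,x)-u^H_m(t,x)=\int_0^t\int_\R G_{t-s}(x-y)\bigl[u^H_m(s,y)-u^H_{m-1}(s,y)\bigr]\,W^H(ds,dy)$$
together with an application of Theorem \ref{teo: Integrability condition H>1/2} with $Z=u^H_m-u^H_{m-1}$ and $\Gamma(s,\cdot)=G_{t-s}(x-\cdot)$ yields
$$\phi_{m+1}(t)\leq C\int_0^t\phi_m(s)\,J(t-s)\,ds,\qquad J(r):=\sup_{H\in K}c_H\int_\R|\F G_r(\xi)|^2|\xi|^{1-2H}\,d\xi,$$
with $C$ independent of $H$, $m$ and $(t,x)$; the factor $(\nu_{T,H})^{p/2-1}$ appearing in \eqref{eq: Burkholder H>1/2} is absorbed into $C$ thanks to the uniform-in-$H$ bound for $\nu_{T,H}$ from Step 1.

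The main technical obstacle is to check that $J\in L^1([0,T])$. For the wave equation, the computation recalled in Step 1 of Proposition \ref{prop: tightness LMC H>1/2} gives $c_H\int_\R|\F G_r(\xi)|^2|\xi|^{1-2H}d\xi\leq C$, uniformly in $r\in[0,T]$ and $H\in K$, using the continuity in $H$ of $C_{1-2H}$. For the heat equation, $c_H\int_\R e^{-r\xi^2}|\xi|^{1-2H}d\xi=c_H\Gamma(1-H)r^{H-1}$, which is dominated uniformly in $H\in K$ by the integrable function $C\max(r^{\eta_1-1},1)$ exactly as in Step 1. Hence $J\in L^1([0,T])$, and iterating the recursion together with the Gr\"onwall-type lemma \cite[Lem. 15]{Dalang} (or equivalently \cite[Lem. 3.10]{BJQ15}) yields the required summability, concluding the proof.
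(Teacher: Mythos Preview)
Your proposal is correct and follows essentially the same approach the paper indicates: you carry out the scheme of \cite[Thm.~13]{Dalang} for the Picard differences, and the $H$-uniform control of the convolution kernel $J$ is handled exactly via the spectral estimates from Step~1 of the proof of Proposition~\ref{prop: tightness LMC H>1/2}, culminating in an application of the Gr\"onwall-type lemma \cite[Lem.~15]{Dalang}. Two very minor remarks: the convention $u^H_{-1}\equiv 0$ is never actually used, and in the base case the reference to \eqref{eq: Lp bound uniform in H} is unnecessary since $Z=u_0^H=\eta$ is deterministic.
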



\section{Identification of the limit}
\label{subsec: limit identification LMC}

Let $H_0\in (\frac 14 ,1 )$ and $\{ H_n,\, n\geq 1\}$ be any
sequence such that $H_n\to H_0$, as $n\to\infty$.
We may assume that there exists a compact set $K\subset (\frac 14,1)$ such that
$H_n\in K$, for all $n\geq 1$.
The tightness
results proved in Propositions \ref{prop: tightness LMC H<1/2} and
\ref{prop: tightness LMC H>1/2} imply that there exists a
subsequence $\{H_{n_k},\, k\geq 1\}$ such that $\{u^{H_{n_k}}, \,
k\geq 1\}$ converges in law in the space $\mathcal{C}([0,T]\times
\R)$  of contiuous functions. This section is devoted to prove that
the limit law is the distribution of $u^{H_0}$.

Our strategy can be summarized as follows. We will verify that the finite dimensional
distributions of $u^{H_n} $ converge to
those of $u^{H_0}$ (see \cite[Thm. 2.6]{billingsley}). For this, it suffices to prove that,
for any fixed $(t,x)\in [0,T]\times \R$, $u^{H_n}(t,x)$ converges to $u^{H_0}(t,x)$ in
$L^2(\Omega)$. This can be done thanks to the fact that the whole family of noises $\{W^H,\, H\in
(0,1)\}$ can be defined on a single probability space (see Section \ref{sec:spectral}). In order
to prove the above $L^2(\Omega)$-convergence, we will check the same convergence
for any of the corresponding Picard iterates, that is, for any $m\geq 1$, we show that
$u^{H_n}_m(t,x)\to
u^{H_0}_m(t,x)$ in $L^2(\Omega)$, as $n\to\infty$, and we will take into account that the
Picard iteration scheme converges to the soution uniformly with respect to the Hurst index $H$.
At this point, we recall (invoking Theorem \ref{teo: equivalence ito-skorohod
solution}) that any Picard iterate admits the following Wiener chaos expansion:
$$u_m^{H_n}(t,x)=\sum_{j=0}^{m} I^{H_n}_j(g_j(\cdot,t,x)),$$
where the latter is a finite sum of multiple Wiener integrals of
order up to $m$ and the kernels $g_j$ are given by \eqref{eq: g_n}.
Therefore, it will be sufficient to prove the $L^2(\Omega)$-convergence, as $n\to \infty$, of any
of the above multiple Wiener integrals, for which we will make use of the representation result
given in Theorem \ref{teo: representation multiple integral}.

Here is the main result of the section:

\begin{theorem}
    \label{prop: finite dimensional convergence LMC}
    Let $H_0\in(\frac{1}{4},1)$ and $\{ H_n,\, n\geq 1\}$ be any sequence
such that $H_n\to H_0$, as $n\to\infty$.
Let $u^{H_n}_n$ and $u^{H_0}$ be the solutions of \eqref{eq: mild formulation LMC} corresponding
the Hurst parameters $H_n$ and $H_0$, respectively.
Then, the finite dimensional distributions of $u^{H_n}$ converge to those of $u^{H_0}$, as $n\to\infty$.
\end{theorem}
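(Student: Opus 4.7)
The plan is to follow the strategy sketched at the beginning of this section. Since $L^2(\Omega)$-convergence of vectors implies convergence in law, it suffices to establish, for each fixed $(t,x)\in [0,T]\times \R$, the convergence $u^{H_n}(t,x)\to u^{H_0}(t,x)$ in $L^2(\Omega)$. Applied component-wise to any finite collection $(t_1,x_1),\dots,(t_k,x_k)$, this yields $L^2$-convergence of the corresponding $\R^k$-valued vectors, hence convergence of finite-dimensional distributions. The whole scheme crucially relies on the fact, established in Section \ref{sec:spectral}, that all noises $\{W^H\}_{H\in(0,1)}$ are defined on a single probability space carrying the complex Gaussian measure $\tilde W$, so that the individual solutions $u^{H_n}(t,x)$ may meaningfully be compared in $L^2(\Omega)$.

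First, I would invoke the uniform Picard convergence given by Lemma \ref{lemma: uniform convergence in L^p} (for $H\geq \frac12$) and its counterpart \eqref{eq:55} in Section \ref{subsec: tightness H<1/2 LMC} (for $H<\frac12$), together with the fact that $\{H_n\}$ can be assumed to lie in a single compact $K\subset (\frac 14,1)$, in order to reduce the problem to proving that, for each fixed $m\geq 0$,
\[
u_m^{H_n}(t,x)\longrightarrow u_m^{H_0}(t,x)\quad\text{in }L^2(\Omega),\qquad n\to\infty.
\]
The standard three-term split bounds $\|u^{H_n}(t,x)-u^{H_0}(t,x)\|_{L^2(\Omega)}$ by twice $\sup_{H\in K}\|u^H(t,x)-u_m^H(t,x)\|_{L^2(\Omega)}$ plus $\|u_m^{H_n}(t,x)-u_m^{H_0}(t,x)\|_{L^2(\Omega)}$, with $m$ chosen large after the fact.

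Next, by Theorem \ref{teo: equivalence ito-skorohod solution}, each Picard iterate admits the finite Wiener chaos expansion $u_m^{H_n}(t,x)=\sum_{j=0}^{m} I_j^{H_n}(g_j(\cdot,t,x))$, where the kernels $g_j$ are given by \eqref{eq: g_n} and, critically, do not depend on $H$. Treating each $j\in\{0,\dots,m\}$ separately, I would apply Theorem \ref{teo: representation multiple integral} to rewrite
\[
I_j^{H_n}(g_j(\cdot,t,x))=\tilde I_j\bigl(\hat g_j^{H_n}(\cdot,t,x)\bigr),
\]
where $\tilde I_j$ is now a fixed operator (the $j$-th Wiener integral against $\tilde W$) and the full $H_n$-dependence is encoded in the deterministic symbol
\[
\hat g_j^{H_n}(t_1,\xi_1,\dots,t_j,\xi_j,t,x) = c_{H_n}^{j/2}\,\F[g_j(t_1,\cdot,\dots,t_j,\cdot,t,x)](\xi_1,\dots,\xi_j)\prod_{i=1}^{j}|\xi_i|^{\frac12-H_n}.
\]

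By the $L^2$-isometry of $\tilde I_j$ on $L^2(\R_+^j\times \R^j;\mathbb{C})$, matters reduce to showing that $\hat g_j^{H_n}(\cdot,t,x)\to \hat g_j^{H_0}(\cdot,t,x)$ in $L^2(\R_+^j\times\R^j)$ as $n\to\infty$. Pointwise convergence of the integrand is immediate, since $c_{H_n}\to c_{H_0}$, $|\xi|^{1/2-H_n}\to |\xi|^{1/2-H_0}$ for every $\xi\neq 0$, and $\F g_j$ is independent of $H$; the natural tool is therefore dominated convergence. The main obstacle will be producing an $L^2$-integrable majorant uniformly in $H_n\in K$: the weight $|\xi|^{1/2-H}$ is singular at the origin when $H>\frac12$ and grows at infinity when $H<\frac12$, so the bound must carefully balance the low- and high-frequency behavior of $\F g_j$ against this weight. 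Since the Green's functions $G_{t-t_i}$ have explicit Fourier transforms (a $\sin$-type kernel for the wave equation and a Gaussian for the heat equation), a splitting into $|\xi|\leq 1$ and $|\xi|>1$, combined with the uniform-in-$H\in K$ estimates already developed in Lemma \ref{lemma: 3.1} and the continuity (hence boundedness on $K$) of $c_H$, should furnish such a majorant and let dominated convergence close the argument.
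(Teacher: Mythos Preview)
Your overall strategy is exactly that of the paper: reduce to pointwise $L^2(\Omega)$-convergence, pass to Picard iterates via the uniform convergence \eqref{eq:55} and Lemma \ref{lemma: uniform convergence in L^p}, use the finite chaos expansion from Theorem \ref{teo: equivalence ito-skorohod solution}, transfer to $\tilde W$ via Theorem \ref{teo: representation multiple integral}, and finish by dominated convergence on the spectral side. Up to that point there is nothing to add.

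The genuine gap is in the last paragraph, where you assert that a splitting $|\xi|\le 1$ / $|\xi|>1$ together with Lemma \ref{lemma: 3.1} ``should furnish'' the majorant. It will not, at least not directly, because the integral does not factor over the $\xi_i$. The explicit Fourier transform of $g_j$ (computed in \cite{intermittency}) is
\[
\F g_j(t_1,\cdot,\dots,t_j,\cdot,t,x)(\xi_1,\dots,\xi_j)
= \eta\, e^{-i(\xi_1+\cdots+\xi_j)x}\prod_{\ell=1}^{j}\overline{\F G_{t_{\ell+1}-t_\ell}(\xi_1+\cdots+\xi_\ell)}\,1_{\{0<t_1<\cdots<t_j<t\}},
\]
so the Green-function factors depend on the \emph{cumulative sums} $\xi_1+\cdots+\xi_\ell$, while the spectral weights $\prod_i|\xi_i|^{1-2H_n}$ sit on the individual coordinates. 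Lemma \ref{lemma: 3.1} controls only the one-dimensional integral $\int|\F G_t(\xi)|^2|\xi|^{\alpha}d\xi$ and says nothing about this coupling.

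The paper resolves this in two different ways depending on the sign of $1-2H_n$. For $H_n<\tfrac12$ one changes variables to $\eta_\ell=\xi_1+\cdots+\xi_\ell$, so that the $\F G$-factors decouple, at the price of turning the weight into $|\eta_1|^{1-2H_n}\prod_{\ell\ge2}|\eta_\ell-\eta_{\ell-1}|^{1-2H_n}$; one then expands each difference via $|a-b|^{1-2H}\le |a|^{1-2H}+|b|^{1-2H}$ to obtain a finite sum of fully factored integrands, each of which can be dominated (using $\beta=\min_n H_n>\tfrac14$) by a product of one-dimensional integrals of the type handled by Lemma \ref{lemma: 3.1}. For $H_n\ge\tfrac12$ one stays in the $\xi$-coordinates, bounds $|\xi_i|^{1-2H_n}$ by the $n$-independent function $g(|\xi_i|)$ equal to $1$ for $|\xi_i|\ge1$ and to $|\xi_i|^{1-2\max_n H_n}$ for $|\xi_i|<1$, and then integrates out the variables one at a time, using that $\int_\R|\F G_s(\xi_1+\cdots+\xi_j)|^2\,g(|\xi_j|)\,d\xi_j$ is bounded uniformly in $\xi_1,\dots,\xi_{j-1}$. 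Your sketch needs to incorporate one (in fact both, since the sequence may straddle $\tfrac12$) of these mechanisms; the bare invocation of Lemma \ref{lemma: 3.1} is not enough.
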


\begin{proof}
We split the proof in three steps.

\smallskip

\noindent {\it{Step 1:}} To start with, we recall that, owing to
Corollary \ref{cor: uniform convergence in L^p} and Lemma
\ref{lemma: uniform convergence in L^p} in the particular case
$p=2$, we have:
\begin{equation}
\lim_{m\to \infty} \sup_{H\in K} \sup_{(t,x)\in[0,T]\times \R} \EE\left[|u^H_m(t,x)-u^H(t,x)|^2\right] = 0,
\label{eq:70}
\end{equation}
where $u_m^H$ denotes the associated $m$th Picard iterate.

As we already explained, in order to assure the statement's validity it is sufficient to show the
following pointwise convergence in $L^2(\Omega)$: for any fixed $(t,x)\in [0,T]\times \R$, it holds
    \begin{equation*}
    \lim_{n\to\infty} \EE\left[|u^{H_n}(t,x)-u^{H_0}(t,x)|^2\right] = 0.
    \end{equation*}
Note that
    \begin{align*}
    & \EE\left[|u^{H_n}(t,x)-u^{H_0}(t,x)|^2\right] \\
    & \quad  \leq C\left(    \EE\left[|u^{H_n}(t,x)-u^{H_n}_m(t,x)|^2\right]
    + \EE\left[|u^{H_n}_m(t,x)-u^{H_0}_m(t,x)|^2\right]
    +   \EE\left[|u^{H_0}_m(t,x)-u^{H_0}(t,x)|^2\right] \right) \\
    & \quad =:I_1(m,n)+I_2(m,n)+I_3(m).
    \end{align*}
By \eqref{eq:70}, we can infer that, for any $\varepsilon>0$, we can choose $m_0$ big enough such that,
for every $m\geq m_0$, we have
    $$\sup_{n\geq 1} \left\{I_1(n,m)+I_3(m)\right\} < \varepsilon.$$
Thus, we are left to show that $I_2(m_0,n)$ tends to zero as $n\to \infty$. This means, in particular,
that the $m_0$-th Picard iterate is continuous in $L^2(\Omega)$, with respect to $H$.

Theorem \ref{teo: equivalence ito-skorohod solution} implies that, for any $H\in (\frac{1}{4},1)$,
$u_{m_0}^{H}$ has the Wiener chaos expansion
    $$u_{m_0}^{H}(t,x)=\sum_{j=0}^{m_0} I^H_j(g_j(\cdot,t,x)),$$
where the functions $g_j$ are defined by \eqref{eq: g_n}. Hence, in order to check that
$I_2(m_0,n)$ tends to zero, it is enough to show that, for any $j=1,\dots,m_0$,
$I_j^{H_n}(g_j(\cdot,t,x))$ converges to $I_j^{H_0}(g_j(\cdot,t,x))$ in $L^2(\Omega)$, as $n\to \infty$. Indeed, by
Theorem \ref{teo: representation multiple integral} we have that
    \begin{align*}
     & I^{H_n}_j(g_j(\cdot,t,x))- I^{H_0}_j(g_j(\cdot,t,x)) \\
     &\qquad = \int_{\{[0,T]\times\R\}^j} \left(c_{H_n}^j |\xi_1|^{\frac{1}{2}-H_n}\cdots |\xi_j|^{\frac{1}{2}-H_n}-
    c_{H_0}^j |\xi_1|^{\frac{1}{2}-H_0}\cdots |\xi_j|^{\frac{1}{2}-H_0}\right) \\
     & \qquad \qquad \times \F g_j(t_1,\cdot,\dots, t_j,\cdot,t,x)(\xi_1,\dots,\xi_n)
     \tilde{W}(dt_1,d\xi_1)\cdots \tilde{W}(dt_j,d\xi_j).
    \end{align*}
Hence
\begin{align*}
    & \EE \left[\left|I^{H_n}_j(g_j(\cdot,t,x))-  I^{H_0}_j(g_j(\cdot,t,x))\right|^2\right] \\
     & \qquad = \int_{\{[0,T]\times\R\}^j} \left|c_{H_n}^j|\xi_1|^{\frac{1}{2}-H_n}\cdots |\xi_j|^{\frac{1}{2}-H_n}-
     c_{H_0}^j |\xi_1|^{\frac{1}{2}-H_0}\cdots |\xi_j|^{\frac{1}{2}-H_0}\right|^2   \\
    & \qquad \qquad \times |\F g_j(t_1,\cdot,\dots, t_j,\cdot,t,x)(\xi_1,\dots,\xi_n)|^2 d\xi_1\cdots d\xi_j \, dt_1\cdots dt_j . \\
\end{align*}
    We show that the last integral converges to $0$ when $n\to\infty$. To do this, we have to compute explicitly
    the Fourier transform appearing in the above expression. Precisely, as detailed in \cite[p. 10]{intermittency},
    we have
\begin{align*}
    & \F g_j(t_1,\cdot,\dots, t_j, \cdot,t,x)(\xi_1, \dots,\xi_j) \\
    & \qquad = \eta e^{-i(\xi_1+\cdots+\xi_j)x} \, \overline{ \F G_{t_2-t_1}(\xi_1)}  \,
    \overline{ \F G_{t_3-t_2}(\xi_1+\xi_2) } \cdots
    \overline{ \F G_{t-t_j}(\xi_1+\cdots+\xi_j) } \, 1_{\{0<t_1<\cdots <t_j<t\}}
 \end{align*}
Therefore, making the change of variables $\eta_{\ell}:=\xi_1+\cdots + \xi_\ell$, for $\ell=1,\dots,j$, we end up with
\begin{align*}
    & \EE\left[|I^{H_n}_j(g_j(\cdot,t,x))-  I^{H_0}_j(g_j(\cdot,t,x))|^2\right] \nonumber \\
    & \quad \leq  \int_{T_j(t)}\int_{\R^j}   \eta   \prod_{\ell=1}^j |\F G_{t_{\ell+1}-t_\ell} (\eta_{\ell})|^2
    \, \left| c_{H_n}^j  |\eta_1|^{\frac{1}{2}-H_n}|\eta_2-\eta_1|^{\frac{1}{2}-H_n} \cdots |\eta_{j}-\eta_{j-1}|^{\frac{1}{2}-H_n}\right. \nonumber \\
    & \qquad  - \left. c_{H_0}^j  |\eta_1|^{\frac{1}{2}-H_0}|\eta_2-\eta_1|^{\frac{1}{2}-H_0} \cdots |\eta_{j}-\eta_{j-1}|^{\frac{1}{2}-H_0} \right|^2 d\xi_1\cdots d\xi_j \, dt_1\cdots dt_j,
\end{align*}
where $T_j(t):=\{ (t_1,\dots,t_j),\, 0<t_1<\cdots<t_j<t \}$. We wish
to prove that the latter integral converges to 0 as $n\to\infty$.
For this, we will apply the Dominated convergence theorem. Note that
the integrand clearly converges to 0 pointwise on $T_j(t)\times
\R^j$. Indeed, the constant $c_H$ (see \eqref{eq:103}) defines a
continuous function of $H\in (0,1)$. Now, we proceed to bound the
integrand by an integrable function. First, we note that the
integrand can be bounded, up to some positive constant, by
\begin{align*}
    & \prod_{\ell=1}^j |\F G_{t_{\ell+1}-t_\ell} (\eta_{\ell})|^2 \,  \left(  c_{H_n}^{2j} |\eta_1|^{1-2H_n}|\eta_2-\eta_1|^{1-2H_n} \cdots |\eta_{j}-\eta_{j-1}|^{1-2H_n}\right. \\
    & \qquad\qquad + \left. c_{H_0}^{2j} |\eta_1|^{1-2H_0}|\eta_2-\eta_1|^{1-2H_0} \cdots |\eta_{j}-\eta_{j-1}|^{1-2H_0}  \right).
\end{align*}
The two resulting terms in the above sum are of the same type,
except the fact that the first one depends on $n$ while the second
does not, and they are equivalent to the integrands studied in
\cite[p. 11-13]{intermittency} (only in the case of wave equation
with $H\in (\frac{1}{4},\frac{1}{2})$). From now on, we will only
consider the term of the integrand function that depends on $n$; the
integrability of the other term will be an immediate consequence of
the treatment of the first one.

Hence, we will find a suitable estimate for the term
\begin{equation}
    \label{eq: integrand dim finale}
           |\eta_1|^{1-2H_n}|\eta_2-\eta_1|^{1-2H_n} \cdots |\eta_{j}-\eta_{j-1}|^{1-2H_n} \, \prod_{\ell=1}^j |\F G_{t_{\ell+1}-t_\ell} (\eta_{\ell})|^2.
\end{equation}
Notice that we have bounded $c_{H_n}$ by a constant, since we may
assume that all $H_n$ are included in a compact set of $(\frac14,1)$. We
distinguish the cases $H_n<\frac12$ and $H_n\geq \frac12$.

\smallskip

\noindent {\it{Step 2:}} In the case $H_n<\frac12$, we use the
following fact: whenever $H\in (0,\frac{1}{2})$, we have
    \begin{equation*}
        \prod_{\ell=2}^{j} |\eta_\ell-\eta_{\ell-1}|^{1-2H}\leq \sum_{\alpha\in D_j}\prod_{\ell=1}^{j} |\eta_{\ell}|^{\alpha_\ell},
    \end{equation*}
    where $D_j$ is a set with cardinality $2^{j-1}$ and its elements are multi-indices $\alpha=(\alpha_1,\dots, \alpha_j)$ whose component's sum equals to $(j-1)(1-2H)$ and satisfy
    $$\alpha_1\in\{0,1-2H\}, \text{ and } \alpha_\ell\in\{ 0, 1-2H, 2(1-2H) \}, \text{ for } \ell=2,\dots, j.$$
When $H=H_n$, the corresponding $\alpha_{\ell}$ will be denoted by $\alpha_{\ell,n}$. Thus, the integrand  \eqref{eq: integrand dim finale} may be bounded by
\begin{equation}
     |\eta_1|^{1-2H_n}\left(\sum_{\alpha\in D_j}\prod_{\ell=1}^{j} |\eta_{\ell}|^{\alpha_{\ell,n}}\right)\left(\prod_{\ell=1}^{j}  \left|\F G_{t_\ell-t_{\ell-1}}(\eta_\ell) \right|^2 \right)
\label{eq:72}
\end{equation}
Let $\beta:=\min_{n\geq 1} H_n>1/4$ and define the functions $f_0,f_1,f_2:\R_+\to \R_+$ as follows: $f_0(r)=1$ and
    $$f_1(r)=\begin{cases}
    r^{1-2\beta}, & r\geq 1, \\
    1, & r < 1,
    \end{cases}$$
    $$f_2(r)=\begin{cases}
    r^{2(1-2\beta)}, & r\geq 1, \\
    1, & r < 1.
    \end{cases}$$
We also set, for every $\alpha_{\ell,n}$,
    $$N(\alpha_{\ell,n}):=
    \begin{cases}
    0, &  \alpha_{\ell,n} =0, \\
    1, & \alpha_{\ell,n}=1-2H_n, \\
    2, & \alpha_{\ell,n}=2(1-2H_n). \\
    \end{cases}$$
Then, we have the following estimate for the term \eqref{eq:72}:
    \begin{align*}
    & |\eta_1|^{1-2H_n}\left(\sum_{\alpha\in D_j}\prod_{\ell=1}^{j} |\eta_{\ell}|^{\alpha_{\ell,n}}\right)\left(\prod_{\ell=1}^{j}  \left|\F G_{t_\ell-t_{\ell-1}}(\eta_\ell) \right|^2 \right)\nonumber \\
    & \qquad \leq   f_1(|\eta_1|) \left(\sum_{\alpha\in D_j}\prod_{\ell=1}^{j} f_{N(\alpha_{\ell,n})  }(|\eta_{\ell}|)\right) \left( \prod_{\ell=1}^{j}  \left|\F G_{t_\ell-t_{\ell-1}}(\eta_\ell) \right|^2 \right).
    \end{align*}
We have to prove that this function is integrable. To check this
last fact, it is sufficient to show it for a single integrand of the
form
    \begin{equation*}
    \prod_{\ell=1}^{j}  \Big|\F G_{t_\ell-t_{\ell-1}}(\eta_\ell) \Big|^2  |\eta_1|^{\beta} \prod_{\ell=1}^{j} |\eta_{\ell}|^{\alpha_{\ell}}
    \end{equation*}
where, now, $\alpha_j$ does not take values in a discrete
set, but they satisfy the weaker constraints:
$$\alpha_1\in K_1\subset [0,1/2), \text{ and } \alpha_\ell\in K_2 \subset [0,1), \text{ for } \ell=2,\dots, j,$$
where $K_1=[0, 1-2\min_{n\geq 1} H_n]$ and $K_2=[0,2(1-2\min_{n\geq
1} H_n)]$ (we are assuming implicitly that $\min_{n\geq 1} H_n
<\frac{1}{2}$; if this is not the case, then the entire sequence
falls in the case $H_n\geq \frac{1}{2}$, which will be studied
afterwards). It is important to notice that the sets $K_1,K_2$ do
not depend on $n$. The fact that $1-2\min_{n\geq 1} H_n<\frac{1}{2}$
and $2(1-2\min_{n\geq 1} H_n)<1$ turns out to be crucial for our
estimates.

Thus, we want to prove that
    \begin{equation}
    \label{eq: integrand3 dim finale}
    \int_{T_j(t)} \left(  \int_{\R} |\F G_{t_2-t_1}(\eta_1)|^2 |\eta_1|^{\beta +\alpha_1} d\eta_1 \right)
     \prod_{\ell=2}^j  \left( \int_{\R} |\F G_{t_{\ell +1}-t_\ell}(\eta_{\ell})|^2 |\eta_\ell|^{\alpha_{\ell}}  d\eta_{\ell}\right) dt_1 \cdots dt_j <\infty.
    \end{equation}
At this point, we have to consider separately the case of the wave
equation case from that of the heat equation. It holds that, for any
$\gamma\in (-1,1)$ (see the proof of Proposition \ref{prop: tightness LMC H>1/2}):
    \begin{equation*}
    \begin{split}
    & \int_{\R} |\F G_t(\xi)|^2|\xi|^{\gamma} d\xi \leq  \begin{cases}
    C'_\gamma (2-\gamma) t^{1-\gamma}, &  \text{ wave equation,}\\
    C''_{\gamma} \frac{1-\gamma}{2} t^{-\frac{(\gamma+1)}{2}}, & \text{ heat equation.}
    \end{cases}
    \end{split}
    \end{equation*}
    We recall that the constants $C'_\gamma$ and $C''_\gamma$ are continuous with respect to $\gamma\in (-1,1)$.
    We will apply the above estimate with $\gamma=1-2H$ and $\gamma=2(1-2H)$, and still we can bound them uniformly with respect to $H\in K \subset (\frac{1}{4},\frac{1}{2}]$, with $K$ compact.
Hence, for the heat equation, the integral in \eqref{eq: integrand3 dim finale} can be estimated by
    \begin{equation*}
    \int_{T_j(t)} (t_2-t_1)^{\frac{-\beta-\alpha_1}{2}} \prod_{\ell=2}^{j} (t_{\ell+1}-t_\ell)^{\frac{-\alpha_\ell-1}{2}} dt_1\cdots dt_j,
    \end{equation*}
    which is finite because all exponents are strictly greater than $-1$.     For the wave equation, we end up with
    \begin{equation*}
    \int_{T_j(t)} (t_2-t_1)^{1-\beta-\alpha_1} \prod_{\ell=2}^{j} (t_{\ell+1}-t_\ell)^{1-\alpha_{\ell}} dt_1\cdots dt_j,
    \end{equation*}
    which is also finite since all exponents are even greater than $0$. This concludes the proof in the case $H\in(\frac{1}{4},\frac{1}{2}]$.

    \smallskip

    \noindent {\it{Step 3:}}
    Let us now go back to expression \eqref{eq: integrand dim finale}, where we resettle the variables $\xi_\ell$ by means of the change of variables $\xi_\ell=\eta_\ell-\eta_{\ell-1}$. That is,
    we aim to bound the following term:
    \begin{equation}
    \label{eq: integrand4 dim finale}
        |\xi_1|^{1-2H_n}\cdots |\xi_j|^{1-2H_n}  \prod_{\ell=1}^j  \left|\F G_{t_{\ell+1}-t_\ell}(\xi_1+\cdots+\xi_\ell )\right|^2,
    \end{equation}
    where we assume that $H_n\in [\frac{1}{2},1)$. Here, the fact that $1-2H_n\leq 0$ helps us. Indeed, we can define the bounding function in a quite straightforward way:
    $$g(r):=\begin{cases}
    1, &  r\geq 1,  \\
    r^{1-2(\max_{n\geq 1} H_n)}, & r<1. \\
    \end{cases}$$
    Clearly, the integrand function in \eqref{eq: integrand4 dim finale} is bounded, for any $n\geq 1$, by
    $$  g(|\xi_1|)\cdots g(|\xi_j|) \, \prod_{\ell=1}^j  \left|\F G_{t_{\ell+1}-t_\ell}(\xi_1+\cdots+\xi_\ell )\right|^2.$$
    We check that this upper bound function is integrable, namely
    \begin{align}
    \label{eq: integrand5 dim finale}
    & \int_{T_{j-1}(t_j)} \int_{\R^{j-1}}  \prod_{\ell=1}^{j-1} \left| \F G_{t_{\ell+1}-t_\ell}(\xi_1+\cdots+\xi_\ell) \right|^2 g(|\xi_\ell|) \nonumber \\
    &\qquad \times \left(\int_{t_{j-1}}^{t} \int_{\R} \left| \F G_{t-t_j}(\xi_1+\cdots+\xi_j) \right|^2g(|\xi_j|)d\xi_j dt_j \right) d\xi_1\cdots d\xi_{j-1} dt_1\cdots dt_{j-1}
    <\infty.
    \end{align}
    We have that
    \begin{align*}
     & \int_{t_{j-1}}^{t} \int_{\R}  \left| \F G_{t-t_j}(\xi_1+\cdots+\xi_j) \right|^2g(|\xi_j|)d\xi_j dt_j \\
     & \qquad =      \int_{t_{j-1}}^{t} \int_{|\xi_j|>1}  \left| \F G_{t-t_j}(\xi_1+\cdots+\xi_j) \right|^2d\xi_j dt_j  \\
    & \qquad \quad +   \int_{t_{j-1}}^{t} \int_{|\xi_j|\leq1}  \left| \F G_{t-t_j}(\xi_1+\cdots+\xi_j) \right|^2 \, |\xi_j|^{1-2\min_{n\geq 1} H_n}d\xi_j dt_j.
    \end{align*}
    We do the computations separately for the  wave and heat equations. To start with, in the case of the wave equation, it clearly holds that
    $$|\F G_{t}(\xi)|=\left|\frac{\sin(t|\xi|)}{|\xi|}\right|\leq
    t,$$
    for all $(t,x)\in [0,T]\times \R$. Thus, we have
    \begin{align*}
     & \int_{t_{j-1}}^{t} \int_{|\xi_j|\leq1}   \left| \F G_{t-t_j}(\xi_1+\cdots+\xi_j) \right|^2|\xi_j|^{1-2\min_{n\geq 1} H_n}d\xi_j dt_j \\
    & \qquad \leq   \int_{t_{j-1}}^{t} \int_{|\xi_j|\leq1} |t-t_j|^2|\xi_j|^{1-2\min_{n\geq 1} H_n}d\xi_j dt_j \\
    & \qquad \leq \frac{CT^3}{1-\min_{n\geq 1} H_n}  <\infty,
    \end{align*}
    and
    \begin{align*}
     & \int_{t_{j-1}}^{t} \int_{|\xi_j|> 1}   \left| \F G_{t-t_j}(\xi_1+\cdots+\xi_j) \right|^2|\xi_j|^{1-2\min_{n\geq 1} H_n} \\
    & \qquad \leq   \int_{t_{j-1}}^{t} \int_{\R}  \frac{\sin^2\left[ (t-t_j) |\xi_1+\cdots+\xi_j| \right]}{|\xi_1+\cdots+\xi_j|^2} d\xi_j dt_j \\
    & \qquad \leq C \int_{t_{j-1}}^t (t-t_j)dt_j<\infty,
    \end{align*}
    since $\int_\R\frac{\sin^2 (t|x|)}{|x|^2}dx=\pi t$.
Therefore, we have got rid of the integral with respect to $d\xi_j dt_j$ in \eqref{eq: integrand5 dim finale}.
Iterating this procedure one proves that the whole integral \eqref{eq: integrand5 dim finale} is finite.

 It remains to prove the analogous result for the heat equation. Here, we have
    $$|\F G_{t}(\xi)|=e^{-\frac{t|\xi|^2}{2}}\leq 1,
    $$
for all $(t,x)\in [0,T]\times \R$. Thus,
\begin{align*}
    & \int_{t_j}^{t} \int_{|\xi_j|\leq1}   \left| \F G_{t-t_j}(\xi_1+\cdots+\xi_j) \right|^2|\xi_j|^{1-2\min_{n\geq 1} H_n}d\xi_j dt_j \\
    &\qquad \leq    \int_{t_j}^{t} \int_{|\xi_j|\leq1} |\xi_j|^{1-2\min_{n\geq 1} H_n}d\xi_j dt_j \\
    & \qquad \leq \frac{T}{1-\min_{n\geq 1} H_n} <\infty,
    \end{align*}
    and
    \begin{align*}
    &  \int_{t_j}^{t} \int_{|\xi_j|> 1} \left| \F G_{t-t_j}(\xi_1+\cdots+\xi_j) \right|^2|\xi_j|^{1-2\min_{n\geq 1} H_n}d\xi_j dt_j \\
     & \qquad \leq    \int_{t_j}^{t} \int_{\R}\exp\left( -(t-t_j)|\xi_1+\cdots+\xi_j|^2\right)  d\xi_j dt_j  \\
     & \qquad = C \int_{t_j}^t \sqrt{t-t_j}dt_j<\infty,
    \end{align*}
    which, again by iterating this computation, shows that the integral in \eqref{eq: integrand5 dim finale} is bounded also in the heat equation case. This completes the proof.
\end{proof}


\appendix

\section{Auxiliary results}
\label{appendix}

In this section, we state some results that have been applied
throughout the paper. We start with four technical lemmas, proved in
\cite{BJQ15}, which provide explicit estimates, depending on $H$,
for the norm in the space $L^2(\R;\mu^H)$ of terms involving the
Fourier transforms of the fundamental solutions of the wave and heat
equations. Finally, we will also state a tightness criterion which
will be applied in Section \ref{sec:tightness}.

We recall that, for the wave and heat equations, we have,
respectively:
\begin{equation*}
\F G_t(\xi)=\dfrac{\sin(t|\xi|)}{|\xi|} \qquad \text{and} \qquad \F
G_t(\xi)=\exp\Big(\frac{-t\xi^2}{2}\Big), \quad t>0,\, \xi\in \R.
\end{equation*}
In the following three lemmas, we will denote either one of these
two functions by $\F G_t(\xi)$. We recall that the spatial spectral
measure is given by $\mu^H(d\xi)=c_H |\xi|^{1-2H} d\xi$ .

\begin{lemma}[\cite{BJQ15}, Lemma 3.1]
    \label{lemma: 3.1}
    Let $T>0$. Then,  the integral
    $$A_T(\alpha):=\int_{0}^{T}\int_{\R} |\F G_t(\xi)|^2|\xi|^\alpha\,d\xi \,dt$$
    converges if and only if $\alpha\in(-1,1)$. In this case, it holds:
    $$A_T(\alpha)=\begin{cases}
    2^{1-\alpha}C_\alpha\dfrac{1}{2-\alpha}T^{2-\alpha}  & \text{for the wave equation,}\\
    \\
    \dfrac{2}{1-\alpha}\Gamma\Big( \dfrac{\alpha+1}{2} \Big)T^{(1-\alpha)/2} & \text{for the {heat} equation,}\\
    \end{cases}$$
    where the constant $C_\alpha$ is given by
    $${C_{\alpha}=\begin{cases}
        \dfrac{\Gamma(\alpha)}{1-\alpha}\sin(\pi\alpha/2), & \alpha\in(-1,1)\setminus\{0\},\\
        \\
        \dfrac{\pi}{2}, & \alpha=0.\\
        \end{cases}}$$
\end{lemma}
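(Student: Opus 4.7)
The plan is to separate the two cases and, in each, perform a scaling substitution that decouples the $t$-integral from the $\xi$-integral; then the convergence range $\alpha\in(-1,1)$ drops out automatically and the constants can be read off from standard sine and Gamma integrals.

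For the heat case, $|\F G_t(\xi)|^2 = e^{-t\xi^2}$, so by symmetry
\[
A_T(\alpha) = 2\int_0^T\!\!\int_0^\infty e^{-t\xi^2}\xi^\alpha\,d\xi\,dt.
\]
I would apply the change of variable $u=t\xi^2$ in the inner integral, which turns it into $\tfrac{1}{2}t^{-(\alpha+1)/2}\int_0^\infty e^{-u}u^{(\alpha-1)/2}du = \tfrac{1}{2}t^{-(\alpha+1)/2}\Gamma\!\left(\tfrac{\alpha+1}{2}\right)$. The $u$-integral converges iff $\alpha>-1$, and the remaining $t$-integral $\int_0^T t^{-(\alpha+1)/2}dt = \tfrac{2T^{(1-\alpha)/2}}{1-\alpha}$ converges iff $\alpha<1$. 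Multiplying the two factors recovers the stated expression for the heat equation.

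For the wave case, $|\F G_t(\xi)|^2 = \sin^2(t|\xi|)/\xi^2$, so
\[
A_T(\alpha) = 2\int_0^T\!\!\int_0^\infty \frac{\sin^2(t\xi)}{\xi^{2-\alpha}}\,d\xi\,dt.
\]
The substitution $u = t\xi$ in the inner integral yields $t^{1-\alpha}\int_0^\infty \sin^2(u)\,u^{\alpha-2}du$. Near $0$, the integrand behaves like $u^\alpha$ (integrable iff $\alpha>-1$); at infinity, like $u^{\alpha-2}$ (integrable iff $\alpha<1$). The outer integral $\int_0^T t^{1-\alpha}dt$ then gives $T^{2-\alpha}/(2-\alpha)$, no new constraint on $\alpha$. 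Together they yield the convergence range $\alpha\in(-1,1)$.

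The main remaining step, and the only delicate computation, is to evaluate $\int_0^\infty \sin^2(u)\,u^{\alpha-2}du$. I would use $\sin^2(u) = (1-\cos(2u))/2$ together with the classical identity $\int_0^\infty (1-\cos v)v^{-s}dv = -\Gamma(1-s)\cos(\pi s/2)$ (valid for $1<s<3$), applied with $s=2-\alpha$, after the further substitution $v=2u$. A short manipulation using $\Gamma(1-s)\Gamma(s) = \pi/\sin(\pi s)$ (for $\alpha\neq 0$) reduces the result to $2^{-\alpha}\frac{\Gamma(\alpha)}{1-\alpha}\sin(\pi\alpha/2)=2^{-\alpha}C_\alpha$, and a separate direct computation (e.g.\ contour integration or a Plancherel-type argument) handles the removable case $\alpha=0$ where $C_\alpha = \pi/2$. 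Combining with the $t$-integral gives exactly the announced formula $A_T(\alpha)=2^{1-\alpha}C_\alpha(2-\alpha)^{-1}T^{2-\alpha}$. The only subtle point is ensuring the constants stay continuous in $\alpha$ across $\alpha=0$, which is what justifies the case distinction in the definition of $C_\alpha$.
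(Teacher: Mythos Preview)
The paper does not actually prove this lemma: it appears in the appendix as a direct citation of \cite[Lemma 3.1]{BJQ15}, with no argument given. Your proposal is therefore not competing with any proof in the present paper; it is a self-contained verification of the cited result, and the strategy---scaling substitutions to factor the double integral, then identifying the remaining one-variable integrals with Gamma and sine Mellin integrals---is the standard one and is correct.

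One small caution: the intermediate identity you quote, $\int_0^\infty (1-\cos v)\,v^{-s}\,dv = -\Gamma(1-s)\cos(\pi s/2)$, is misstated (test $s=3/2$: the integral is positive but the right-hand side is negative). A cleaner route is to integrate $\int_0^\infty \sin^2(u)\,u^{\alpha-2}\,du$ by parts once to get $\frac{1}{1-\alpha}\int_0^\infty \sin(2u)\,u^{\alpha-1}\,du$, then use the standard Mellin integral $\int_0^\infty \sin(w)\,w^{\alpha-1}\,dw=\Gamma(\alpha)\sin(\pi\alpha/2)$ after rescaling $w=2u$; this yields $2^{-\alpha}C_\alpha$ directly and avoids any sign ambiguity. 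Either way your final constants are correct, and the continuity of $C_\alpha$ at $\alpha=0$ (recovering $\pi/2$) follows from $\Gamma(\alpha)\sin(\pi\alpha/2)\to \pi/2$ as $\alpha\to 0$.
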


\begin{lemma}[\cite{BJQ15}, Lemma 3.4]
    \label{lemma: 3.4}
    Let $T>0$ and $\alpha\in(-1,1)$. Then, for any $h>0$, it holds:
    \begin{equation*}
    \int_{0}^{T}\int_{\R}(1-\cos(\xi h))\, |\F G_t(\xi)|^2|\xi|^\alpha\,d\xi \,dt \leq
    \begin{cases}
    C|h|^{1-\alpha} & \text{for the heat equation,} \\
    CT|h|^{1-\alpha} & \text{for the wave equation,} \\
    \end{cases}
    \end{equation*}
    where $C=\int_{\R} (1-\cos\eta)|\eta|^{\alpha-2}d\eta$.
\end{lemma}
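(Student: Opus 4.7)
The plan is to reduce both cases to the one-variable integral $C=\int_{\R}(1-\cos\eta)|\eta|^{\alpha-2}d\eta$ by a dilation change of variables $\eta=\xi h$, after first controlling the $t$-dependence separately in each case. Observe that this reference integral is finite for $\alpha\in(-1,1)$: near the origin $(1-\cos\eta)\sim \eta^2/2$ so the integrand behaves like $|\eta|^{\alpha}$, which is integrable at $0$ when $\alpha>-1$, while at infinity $|1-\cos\eta|\leq 2$ makes the integrand controlled by $2|\eta|^{\alpha-2}$, integrable when $\alpha<1$.

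For the wave equation, I would substitute $|\F G_t(\xi)|^2=\sin^2(t|\xi|)/|\xi|^2$, use the crude bound $\sin^2(t|\xi|)\leq 1$, and integrate the resulting constant in $t$ over $[0,T]$ to pick up the factor $T$. This leaves
\[
\int_{0}^{T}\int_{\R}(1-\cos(\xi h))\,|\F G_t(\xi)|^2|\xi|^\alpha\,d\xi\,dt
\leq T\int_{\R}(1-\cos(\xi h))|\xi|^{\alpha-2}d\xi,
\]
and the change of variables $\eta=\xi h$ (with $d\xi=d\eta/h$) produces $h^{1-\alpha}$ times the reference integral $C$, giving the bound $CT|h|^{1-\alpha}$.

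For the heat equation, $|\F G_t(\xi)|^2=e^{-t\xi^2}$, and it pays to integrate in $t$ first:
\[
\int_{0}^{T}e^{-t\xi^2}dt=\frac{1-e^{-T\xi^2}}{\xi^2}\leq |\xi|^{-2}.
\]
Substituting this bound reduces the double integral to $\int_{\R}(1-\cos(\xi h))|\xi|^{\alpha-2}d\xi$, and the same dilation $\eta=\xi h$ yields $C|h|^{1-\alpha}$, with no $T$-dependence (as the heat kernel is integrable in $t$ over $\R_+$).

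There is essentially no obstacle here: the only mildly delicate point is justifying Fubini (i.e., checking integrability before rearranging), which is immediate from the preceding convergence remark on $C$ together with the uniform bounds $\sin^2\leq 1$ (wave) and $\int_0^T e^{-t\xi^2}dt\leq |\xi|^{-2}$ (heat). Hence the bound is obtained by separating the $t$-integral (giving a factor $T$ for the wave case and a dimensionless bound for the heat case) and then exploiting the scaling invariance of the $(1-\cos)$ kernel under $\eta=\xi h$.
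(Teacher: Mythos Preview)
Your proof is correct. Note that the paper itself does not prove this lemma: it is merely quoted from \cite{BJQ15} in the Appendix, so there is no in-paper argument to compare against. Your approach---bounding $\sin^2(t|\xi|)\leq 1$ in the wave case and $\int_0^T e^{-t\xi^2}dt\leq |\xi|^{-2}$ in the heat case, then scaling $\eta=\xi h$ to extract $|h|^{1-\alpha}$---is the natural one and is essentially the argument in the original reference.
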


\begin{lemma}[\cite{BJQ15}, Lemma 3.5]
    \label{lemma: 3.5}
    Let $T>0$ and $\alpha\in(-1,1)$. Then, for any $h>0$, it holds:
    \begin{equation*}
    \int_{0}^{T}\int_{\R} |\F G_{t+h}(\xi)-\F G_{t}(\xi)|^2|\xi|^\alpha\,d\xi \,dt \leq
    \begin{cases}
    C_\alpha |h|^{(1-\alpha)/2} & \text{for the heat equation,} \\
    C_{\alpha} T|h|^{1-\alpha} & \text{for the wave equation,} \\
    \end{cases}
    \end{equation*}
    where
    $$C_\alpha= \int_{\mathbb
        R}\frac{(1-e^{-\eta^2/2})^2}{|\eta|^{2-\alpha}}d\eta \quad\text{for
        the heat equation, and} $$
    $$C_\alpha=4\int_{\mathbb R}\frac{\min(1,|\eta|^2)}{|\eta|^{2-\alpha}}d\eta \quad \text{for the wave equation.}
    $$
\end{lemma}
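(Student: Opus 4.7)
The plan is to handle the wave and heat equations separately, in each case computing $\F G_{t+h}(\xi) - \F G_t(\xi)$ explicitly from the known formula for $\F G_t$, performing the time integration, and then extracting the correct power of $h$ by a scaling change of variables in $\xi$.

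For the heat equation, using $\F G_t(\xi)=e^{-t\xi^2/2}$, I would factor the difference as
\[
\F G_{t+h}(\xi)-\F G_t(\xi) = e^{-t\xi^2/2}\bigl(e^{-h\xi^2/2}-1\bigr),
\]
so $|\F G_{t+h}(\xi)-\F G_t(\xi)|^2 = e^{-t\xi^2}(1-e^{-h\xi^2/2})^2$. Integrating in $t$ gives $\int_0^T e^{-t\xi^2}\,dt = (1-e^{-T\xi^2})/\xi^2 \le 1/\xi^2$, and hence the whole double integral is dominated by $\int_\R (1-e^{-h\xi^2/2})^2\,|\xi|^{\alpha-2}\,d\xi$. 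The change of variables $\eta=\sqrt{h}\,\xi$ then produces exactly the factor $h^{(1-\alpha)/2}$ times the constant $C_\alpha=\int_\R (1-e^{-\eta^2/2})^2|\eta|^{\alpha-2}\,d\eta$, which is finite for $\alpha\in(-1,1)$ by checking the behaviour at $0$ (where the numerator is $O(\eta^4)$) and at $\infty$ (where it is bounded).

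For the wave equation, I would use $\F G_t(\xi)=\sin(t|\xi|)/|\xi|$ together with the sum-to-product identity $\sin(a+b)-\sin a = 2\cos(a+b/2)\sin(b/2)$, which yields
\[
|\F G_{t+h}(\xi)-\F G_t(\xi)|^2 = \frac{4\cos^2((t+h/2)|\xi|)\sin^2(h|\xi|/2)}{\xi^2}.
\]
Bounding the cosine by $1$, the $t$-integration contributes at most a factor $T$, leaving $4T\int_\R \sin^2(h|\xi|/2)\,|\xi|^{\alpha-2}\,d\xi$. I then apply the elementary inequality $4\sin^2(h|\xi|/2) \le 4\min(1, h^2\xi^2)$ (easily verified by splitting into the regimes $h^2\xi^2\le 1$, $1\le h^2\xi^2\le 4$ and $h^2\xi^2\ge 4$), and the change of variables $\eta=h\xi$ extracts $h^{1-\alpha}$ and leaves the stated constant $C_\alpha = 4\int_\R \min(1,\eta^2)|\eta|^{\alpha-2}\,d\eta$.

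The main obstacle is not any deep tool but rather the bookkeeping of constants; in particular one needs the wave-case majorisation $4\sin^2(h|\xi|/2)\le 4\min(1,h^2\xi^2)$ to match the constant exactly as written in the statement. Finiteness of the two $C_\alpha$'s on $\alpha\in(-1,1)$ is immediate from the behaviour of the integrands at the origin and at infinity, which is where the hypothesis $\alpha\in(-1,1)$ is used.
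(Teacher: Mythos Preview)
Your proof is correct. Note, however, that the paper does not actually prove this lemma: it is stated in the Appendix as an auxiliary result quoted directly from \cite{BJQ15} (their Lemma 3.5), with no proof given in the present paper. So there is no ``paper's own proof'' to compare against here.

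That said, your argument is the natural one and almost certainly coincides with the original proof in \cite{BJQ15}: factor the time-increment, integrate out $t$ (exactly for the heat kernel, by a crude $\cos^2\le 1$ bound for the wave kernel), and then scale $\xi$ to extract the power of $h$. One small remark on the wave case: the inequality $4\sin^2(h|\xi|/2)\le 4\min(1,h^2\xi^2)$ follows immediately from $|\sin x|\le\min(1,|x|)$ applied with $x=h|\xi|/2$ (giving $4\sin^2 x\le 4\min(1,x^2)\le 4\min(1,4x^2)$), so the case-splitting you mention is not really needed. The finiteness of both constants $C_\alpha$ for $\alpha\in(-1,1)$ is exactly as you say.
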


\begin{lemma}[\cite{BJQ15}, Lemma D.2]
    \label{lemma: D.2}
    For any $H\in (0, \frac 12)$ and for any $\xi\in\R$, we have:
    $$\int_{\R} \frac{|1-e^{-i\xi x}|^2}{|x|^{2-2H}}dx=|\xi|^{1-2H}\frac{2 \Gamma(2H+1)\sin(\pi H)}{H(1-2H)}$$
\end{lemma}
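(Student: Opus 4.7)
The plan is to reduce the claim to a classical one-dimensional Mellin transform identity. First I would write $|1-e^{-i\xi x}|^2 = 2(1-\cos(\xi x))$ and note that the integrand is even in $x$, so the statement reduces to
\[
    4 \int_0^\infty \frac{1-\cos(\xi x)}{x^{2-2H}}\,dx = |\xi|^{1-2H}\,\frac{2\Gamma(2H+1)\sin(\pi H)}{H(1-2H)}.
\]
The case $\xi = 0$ is trivial and the identity is invariant under $\xi \mapsto -\xi$, so I may assume $\xi > 0$. Performing the scaling $y = \xi x$ pulls out a factor of $\xi^{1-2H}$, leaving the $\xi$-independent integral
\[
    I_H := \int_0^\infty \frac{1-\cos y}{y^{2-2H}}\, dy.
\]

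Next I would integrate by parts with $u = 1-\cos y$ and $dv = y^{2H-2}\,dy$, so that $du = \sin y\,dy$ and $v = -y^{2H-1}/(1-2H)$ (using $2H-1 < 0$). The boundary terms vanish: at $y \to 0^+$, $y^{2H-1}(1-\cos y) \sim \tfrac12 y^{2H+1} \to 0$ since $2H+1 > 0$; at $y \to \infty$, $y^{2H-1}\to 0$ since $2H-1 < 0$ (and $|1-\cos y|$ is bounded). This yields
\[
    I_H = \frac{1}{1-2H} \int_0^\infty y^{2H-1} \sin y \, dy.
\]

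At this point I would invoke the classical Mellin transform identity $\int_0^\infty y^{s-1}\sin y\,dy = \Gamma(s)\sin(\pi s/2)$, valid for $0 < s < 1$, applied with $s = 2H \in (0,1)$. This gives $I_H = \Gamma(2H)\sin(\pi H)/(1-2H)$, hence the left-hand side of the claim equals
\[
    |\xi|^{1-2H} \cdot 4I_H = |\xi|^{1-2H}\,\frac{4\Gamma(2H)\sin(\pi H)}{1-2H}.
\]
Using the functional equation $\Gamma(2H+1) = 2H\,\Gamma(2H)$, one rewrites $4\Gamma(2H) = 2\Gamma(2H+1)/H$, which matches the right-hand side exactly. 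The only subtle point is verifying the vanishing of boundary terms in the integration by parts and justifying use of the Mellin formula at $s=2H$; both follow from the strict inclusion $H \in (0,\tfrac12)$, so no genuine obstacle arises.
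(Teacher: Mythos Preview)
Your proof is correct. The paper does not actually prove this lemma; it simply states it in the Appendix as a citation of \cite[Lemma D.2]{BJQ15}, so there is no in-paper argument to compare against. Your reduction via $|1-e^{-i\xi x}|^2 = 2(1-\cos(\xi x))$, scaling, integration by parts, and the Mellin identity $\int_0^\infty y^{s-1}\sin y\,dy = \Gamma(s)\sin(\pi s/2)$ for $s=2H\in(0,1)$ is a standard and complete derivation, and the final algebraic identification using $\Gamma(2H+1)=2H\,\Gamma(2H)$ is correct.
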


\medskip

The following tightness criterion on the plane was proved in
\cite[Prop. 2.3]{Yor}.
\begin{theorem}
    \label{th: centsov}
    Let $\{X_\lambda\}_{\lambda\in\Lambda}$ be a family of random functions indexed on the
    set $\Lambda$ and taking values in the space $\C([0,T]\times \R)$, in which we consider
    the metric of uniform convergence over compact sets. Then, the family
    $\{X_\lambda\}_{\lambda\in \Lambda}$ is tight if, for any compact set $J\subset \R$,
    there exist $p',p>0$, $\delta>2$, and a constant $C$ such that the following holds
    for any $t',t\in [0,T]$ and $x',x\in J$:
    \begin{itemize}
        \item[(i)] $\sup_{\lambda\in \Lambda}\EE\left[|X_\lambda(0,0)|^{p'}\right]<\infty$,
        \item[(ii)] $\sup_{\lambda\in \Lambda} \EE\left[|X_\lambda(t',x')-X_\lambda(t,x)|^p\right]\leq
        C\left(|t'-t|+|x'-x|\right)^\delta$.
    \end{itemize}
\end{theorem}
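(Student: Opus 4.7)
The plan is to derive tightness in the Fréchet space $\C([0,T]\times \R)$ equipped with the metric of uniform convergence on compacts by reducing to tightness on each compact slab $\C([0,T]\times J_N)$, $J_N=[-N,N]$, and then applying a two-parameter Kolmogorov--Chentsov argument on each slab.

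First I would invoke the general fact that a family of probability measures on $\C([0,T]\times \R)$ with the uniform-on-compacts topology is tight if and only if, for every $N\in\N$, the push-forward by the restriction map $\pi_N:f\mapsto f|_{[0,T]\times J_N}$ is tight in the Banach space $\C([0,T]\times J_N)$. This is because the topology is metrizable by the countable family of seminorms $\|\cdot\|_{\infty,[0,T]\times J_N}$, and any Polish projective-limit space has this property. Granting slab-wise tightness, a standard diagonal argument gives, for every $\varepsilon>0$, compact sets $K_N\subset \C([0,T]\times J_N)$ with $\inf_\lambda \P(\pi_N X_\lambda\in K_N)\ge 1-\varepsilon 2^{-N}$; then $\bigcap_N \pi_N^{-1}(K_N)$ is relatively compact in $\C([0,T]\times \R)$ and carries probability at least $1-\varepsilon$ uniformly in $\lambda$.

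Next, fix $N$ and set $J=J_N$. Condition (i) together with Markov's inequality yields $M=M(\varepsilon)$ with $\sup_\lambda \P(|X_\lambda(0,0)|>M)<\varepsilon$. Condition (ii) is the classical two-parameter Kolmogorov moment bound, and I would apply a Garsia--Rodemich--Rumsey-type lemma, or equivalently a dyadic chaining argument on $[0,T]\times J$, to convert it into a uniform Hölder bound: for any $\gamma\in(0,(\delta-2)/p)$ there exists a constant $C'=C'(J,p,\delta,\gamma)$ such that
\[
\sup_{\lambda}\EE\left[\sup_{(t,x)\ne(t',x')\in[0,T]\times J}\frac{|X_\lambda(t',x')-X_\lambda(t,x)|^p}{(|t'-t|+|x'-x|)^{\gamma p}}\right]\le C'.
\]
A further application of Markov produces $R=R(\varepsilon)$ with $\sup_\lambda \P(\|X_\lambda\|_{\C^\gamma([0,T]\times J)}>R)<\varepsilon$.

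Finally I would assemble: the set
\[
K_{\varepsilon,N}=\bigl\{f\in \C([0,T]\times J):\ |f(0,0)|\le M \text{ and } \|f\|_{\C^\gamma([0,T]\times J)}\le R\bigr\}
\]
is compact in $\C([0,T]\times J)$ by Arzelà--Ascoli, since the Hölder control together with the pointwise bound at $(0,0)$ gives a uniformly bounded and equicontinuous family. Combining with the two Markov bounds, $\inf_\lambda \P(\pi_N X_\lambda\in K_{\varepsilon,N})\ge 1-2\varepsilon$, which completes the slab-wise tightness and, through the diagonal construction above, the tightness of $\{X_\lambda\}_{\lambda\in\Lambda}$ in $\C([0,T]\times \R)$. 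The main obstacle is the Kolmogorov--Chentsov moment-to-modulus implication in two parameters; it is a routine but careful chaining because $t$ and $x$ enter symmetrically in the bound $(|t'-t|+|x'-x|)^\delta$, so the dyadic decomposition must be performed on a single grid in $[0,T]\times J$ rather than coordinate-wise, and the exponent $\delta>2$ (rather than $>1$) is exactly what is needed to sum the resulting geometric series over the two-dimensional parameter space.
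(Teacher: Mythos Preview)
The paper does not actually prove this statement; it is quoted in the Appendix as a tool, with the proof attributed to \cite[Prop.~2.3]{Yor}. Your outline is the standard route to such a criterion and is essentially correct: reduce to compact slabs $[0,T]\times J_N$ via the projective-limit structure of $\C([0,T]\times\R)$, use the two-parameter Kolmogorov--Chentsov/GRR chaining to turn (ii) into a uniform H\"older bound (the requirement $\delta>2$ being exactly the condition $\delta>d$ with $d=2$), and conclude by Arzel\`a--Ascoli combined with the one-point bound (i). This is in fact how Yor's argument proceeds as well, so there is no substantive difference to report.
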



\section*{Acknowledgement}

Research supported by the grants MTM2015-67802P and
PGC2018-097848-B-I00 (Ministerio de Econom\'ia y Competitividad).


\end{document}